\newtheorem{remark}{Remark}[section]
\newtheorem{theorem}{Theorem}[section]
\theoremstyle{definition}
\newtheorem{definition}{Definition}[section]
\newtheorem{Example}{Example}[section]
\newcommand{\one}{{\bf 1}} 
\numberwithin{equation}{section}
\begin{document}
\def\shorttitle{Cross-covariance matrices}
 \def\shortauthors{A. Dey}
\title{\textbf{\Large \sc
%When is a variable free Gaussian?
%Independence of $\bar X$ and $s^2$ in the free world
Joint convergence of sample cross-covariance matrices
}}
 \author{
%\hspace{0.05\textwidth}
 \parbox[t]{0.3\textwidth}{{Monika Bhattacharjee}
 \thanks{Research  supported by seed grant from IIT Bombay and Start-up Research Grant from SERB, Dept.~of Science and Technology, Govt.~of India.}
 \\ {\small Dept. of Math. \\
Indian Institute of Technolgy\\  
monaiidexp.gamma@gmail.com\\ }}
\hspace{0.01\textwidth}
 \parbox[t]{0.3\textwidth}{{Arup Bose}
  \thanks{Research  supported by J.C.~Bose National Fellowship, Dept.~of Science and Technology, Govt.~of India.}
 \\ {\small Stat.-Math Unit\\
Indian Statistical Institute\\  
%203 B.T. Road\\
%Kolkata 700108\\
%INDIA \\ 
bosearu@gmail.com\\ }}
\hspace{0.01\textwidth}
 \parbox[t]{0.20\textwidth}{{{Apratim Dey}
 \\ {\small Dept. of Stat.\\
Stanford University\\  
%203 B.T. Road\\
%Kolkata 700108\\
%INDIA \\
apd1995@stanford.edu\\}}
}
}
\maketitle

\begin{abstract}
Suppose $X$ and $Y$ are $p\times n$ matrices each with  mean $0$, variance $1$ and where all moments of any order are  uniformly bounded as $p,n \to \infty$. Moreover, the entries $(X_{ij}, Y_{ij})$ are independent across $i,j$ with a common correlation $\rho$. Let $C=n^{-1}XY^*$ be the sample cross-covariance matrix. We show that if $n, p\to \infty, p/n\to y\neq 0$, then $C$ converges in the algebraic sense and the limit moments depend only on $\rho$. Independent copies of such matrices with same $p$ but different $n$, say $\{n_l\}$,  different correlations $\{\rho_l\}$,  and different non-zero $y$'s, say $\{y_l\}$  also converge jointly and are asymptotically free. When $y=0$, the matrix $\sqrt{np^{-1}}(C-\rho I_p)$ converges to an elliptic variable with parameter $\rho^2$. In particular, this elliptic variable is circular when $\rho=0$ and is semi-circular when $\rho=1$. If we take independent $C_l$, then the matrices $\{\sqrt{n_lp^{-1}}(C_l-\rho_l I_p)\}$ converge jointly and are also asymptotically free. 
As a consequence, the limiting spectral distribution of any symmetric matrix polynomial exists and has compact support. \end{abstract}

\noindent \textbf{Key words}. Sample cross-covariance matrices, joint convergence, free independence, free cumulants, semi-circular variable, circular variable, elliptic variable, cross-covariance variable, Mar\v{c}enko-Pastur law, compound free Poisson law.\vskip10pt

\noindent \textbf{AMS 2000 Subject classification}: Primary 60B20, Secondary 46L54. 

%\begin{theorem}
%Let $(X_{n1},...,X_{nk})$ be a collection of $k$ (with $k$ fixed) symmetric Gaussian Wigner random matrices with entries having mean zero and variance $\dfrac{1}{n}$. Then $$\dfrac{1}{n}ETr(X_{n\epsilon_1}...X_{n\epsilon_j})\to \varphi(s_{\epsilon_1}...s_{\epsilon_j})$$for each $(\epsilon_1,...,\epsilon_j)\in [k]^j$, for all $j\geq1$, where $s_1,...s_k$ are freely independent standard semicircle variables on a common algebra $(\mathcal A,\varphi)$.
%\end{theorem}

%So free probability helps us to understand the relationship among the limits of the random matrices and describe the limits as some non-commutative random variables on an algebra.

\section{Introduction} 
%Various types of large dimensional random matrix models have been studied in the literature. In particular, 
The large sample behaviour of the high dimensional sample covariance matrix $S=n^{-1}XX^{*}$ where $X$ is a $p\times n$ matrix with i.i.d. entries has been extensively studied. 
%both from the theoretical and applications point of view. 
%and has been widely used in many areas including high-dimensional statistics and wireless communications. 
Under suitable moment assumptions on the entries, the convergence of its spectral distribution  when $n,p\to \infty$ and $p/n\to y\neq 0$ was originally shown in \cite{marcenkopastur1967} and the limit law is now known as the Mar\v{c}enko-Pastur law. When $y=0$, the limit spectral distribution (LSD) of $\sqrt{np^{-1}}(S-I_p)$ where $I_p$ is the $p\times p$ identity matrix, is known to be
 the (standard) semi-circle law. See  \cite{BS2010}  and  \cite{bose2018} for book-level expositions of these results.  
% autocovariance matrices (\cite{BBS2014});  
The joint algebraic convergence and asymptotic freeness of independent $S$-matrices was established in \cite{CC2004}. The joint convergence of the generalized covariance matrices and the convergence of the spectral distribution of their symmetric matrix polynomials was dealt in \cite{BB2016} and  \cite{BB2017}. %(a) ??something wrong with BB references. two references are repeated?? See reference list in the pdf. (b)  ackemann ref volume ??. (c) Bai silverstein ref, delete Vol ref or give the series name%(Erratum: \cite{BB2018e}).  %???MUST INCLUDE ERRATA/CORRIGENDUM REFERENCE. 

The semi-circle law is a central probability law in free probability and orginally arose from the study of a Wigner matrix $W_n$ which is a real symmetric matrix whose entries are i.i.d. with mean $0$ and variance $1$. The limit spectral distribution of $n^{-1/2}W_n$  is the (standard) semi-circle law. Moreover, independent copies of these matrices, when all moments are finite, converge jointly in the algebraic sense and are asymptotically free. 

If in $W_n$ the $(i,j)$-th and the $(j,i)$-th entries have a common correlation $\rho$, then it is called an elliptic matrix, and in that  case the LSD of $n^{-1/2}W_n$ is the uniform law in the interior of an ellipse centered at the origin with lengths of the major and the minor axes being $\sqrt{2(1+\rho)}$ and $\sqrt{2(1-\rho)}$.  See 
\cite{nguyenrourke}. In particular if $\rho=1$ we recover the semi-circle law result for the Wigner matrix, and if $\rho=0$ then the LSD is uniformly distributed over the unit disc. It is also known that independent copies of  elliptic matrices with possibly different $\rho$ converge jointly to elliptic elements and are asymptotically free. See \cite{boseAdh}.

Motivated by the above results, we consider the following high dimensional model: $X$ and $Y$ are two $p\times n$ random matrices  where the entries $(x_{ij}, y_{ij})$, $1\leq i\leq p$, $1\leq j \leq n$  are independent bivariate random variables with means $0$, variances $1$ and correlation $\rho$. Then the matrix $C=n^{-1}XY^{*}$ is called the sample \textit{cross-covariance matrix}.  Note that if $\rho=1$, then $X=Y$ and we recover the $S$ matrix. 

We study the joint convergence of independent copies  $\{C_l\}$ of these matrices with possibly different $\{\rho_l\}$ and $\{n_l\}$. This convergence is taken to be convergence as elements of an appropriate $*$-probability space as described below. 

Consider the $*$-probability space $(\mathcal{M}_p, \varphi_p)$ where $\mathcal{M}_p$ is the set of all $p\times p$ random matrices: 
 \begin{equation}\label{eq:alg}
\mathcal{M}_p(\mathbb{C})=\{A: A=((a_{ij}))_{p\times p} \ \text{and}\ \ \ \mathbb{E}|a_{ij}|^k < \infty\ \ \text{for all}\ \ i, j, k\},
\end{equation}
and the state $\varphi_p$ is defined as    
$$\varphi_p(A)=p^{-1}\mathbb{E}[\text{Trace}(A)].$$
 Note that $\varphi_p$ is  positive  and tracial (that is, $\varphi_p(aa^{*}) \geq 0$, and $\varphi_p(ab)=\varphi_p(ba)$ for all $a, b$).

Elements $\{A_l\}$, $1\leq i \leq t$ from  $\mathcal{M}_p(\mathbb{C})$ are said to converge jointly if for every polynomial $\Pi(A_l, 1\leq l\leq t)$ in the variables $\{A_l, A_l^{*}\}$, 
$\varphi_p(\Pi(A_l, 1\leq l\leq t))=p^{-1}\mathbb{E}[\text{Trace}(\Pi(A_l, 1\leq l\leq t))]$ converges as $p\to \infty$. The limit algebra is taken to be a $*$-algebra generated by indeterminates $\{a_l\}$ and its state is defined through the above limits as
\begin{equation}\label{eq:convdef}\varphi (\Pi(a_l, 1\leq l\leq t))=\lim_{p\to \infty} p^{-1}{\mathbb{E}}[\text{Trace}(\Pi(A_l, 1\leq l\leq t))].
\end{equation}
We shall refer to this convergence as algebraic convergence.  Note that $\varphi$ is also tracial and positive.  The matrices $\{A_l\}$ are said to be free if the limit variables $\{a_l\}$ are free.  

Two different  cases arise: for each $l$, either $n_l, p\to \infty, p/n_l\to y_l, 0 < y_l < \infty$ or $p\to \infty, p/n_l\to 0$. Suppose the entries of $\{X_l, Y_l\}$ have mean $0$, variance $1$, and moments of any order are uniformly bounded. Moreover, across $l$ the matrices are independent. Then in the first case $\{C_l\}$ converge jointly in the sense of (\ref{eq:convdef}) to say $\{c_l\}$ which are asymptotically free. 
%to say $\{c_l\}$ which, and are asymptotically free. That is, the limit variables,  are free in the limit $*$-algebra with respect to the state $\varphi$ defined in (\ref{eq:convdef}). 
The moments of each $c_l$ depend only on $y_l$ and $\rho_l$ and we give a formula for their free cumulants. In the second case $\{\sqrt{n_l p^{-1}}(C_l-\rho_lI_p)\}$, $1\leq l \leq t$ converge to free elliptic variables with parameters $\rho_l^2$. The algebraic convergence results on independent $S$-matrices mentioned earlier are obtained as a special case by using $\rho=1$. 

There is another natural state on $\mathcal{M}_p(\mathbb{C})$ given by 
$$\tilde{\varphi}_p(A)=p^{-1}\text{Trace}(A).$$
Convergence with respect to this state is defined as the almost sure convergence of   
$p^{-1}\text{Trace}(\Pi$ $(A_l, 1\leq l\leq t))$ for all polynomials $\Pi$, and for our purposes 
the limits are non-random. In the present paper, it turns out that all the convergence results described above that hold for $\{C_l\}$  with respect to $\varphi_p$ also hold with respect to $\tilde{\varphi}_{p}$. 
%in (\ref{eq:convdef}) for the sequences under our consideration. 
 
A related notion of convergence for a single sequence of random matrices is the convergence of the spectral distribution. 
Suppose $A_p$ is any $p\times p$ matrix with eigenvalues $\lambda_1 , \lambda_2 , . . . , \lambda_p$. The  random probability law which puts mass $p^{-1}$ on each eigenvalue is called the empirical spectral distribution (ESD) of $A_p$.  If we take a further expectation with respect to the underlying law of the random variables, then it defines another probability law, which we shall call the expected empirical spectral distribution (EESD). If the ESD converges weakly (in probability or almost surely), the limit is called the 
limiting spectral distribution (LSD). If this limit is non-random, then it is also the limit of the EESD. Usually the convergence of the EESD is easier to establish and then the convergence of the ESD to the same limit often follows by a Borel-Cantelli type argument on the moments of the ESD.  

\cite{akemann2008} and \cite{benet2014} respectively analysed the characteristic polynomial and spectral domain,  and \cite{akemann2020non} established the weak convergence of the ESD of a cross-covariance matrix with complex Gaussian entries when  $0 < y_l <\infty$. 
Now consider any symmetric matrix polynomial $\Pi$ in the matrices $\{C_l, C_l^{*}\}$. Then the moments of the EESD of $\Pi$ are $p^{-1}
\mathbb{E}[{\rm Trace}(\Pi^k)]$. Algebraic convergence with respect to $\varphi_p$ implies that $p^{-1}\mathbb{E}[{\rm Trace}(\Pi^k)]$ converges for all integers $k\geq 1$. It is easily checked that these limiting moments define a unique probability law and hence the EESD of $\Pi$ converges weakly to this law. This convergence can be upgraded to almost sure convergence of the ESD of $\Pi$ by an estimation of the fourth  moment  of $p^{-1}{\rm Trace}(\Pi^k)-p^{-1}\mathbb{E}[{\rm Trace}(\Pi^k)]$ and applying the Borel-Cantelli Lemma. As an example, the distribution of the singular values of $C_l$ converges almost surely. Simulations suggest that the LSD exists even for non-symmetric polynomials. It is difficult to settle this rigorously for any non-symmetric polynomial, and we do not pursue this issue in this article. 
 %and the In this paper, we consider a different generalization of sample covariance matrices which is the product of two independent matrices whose $(i,j)$-th entries are correlated and this correlation is same across all entries. We refer this matrix as cross-covariance matrix. Here we establish asymptotic freeness of independent cross-covariance matrices and their marginal limits. \textbf{Need to add more ...}

\section{Necessary notions from non-commutative probability}
We briefly mention some notions of non-commutative probability that we shall need. For further details see  \cite{NS2006}.
Let $(\mathcal{A},\varphi)$ be a $\ast$-probability space.  Suppose $\{a_i: i \in I \}$ is a collection of variables from 
$\mathcal{A}$.  
Then the numbers $\{\varphi\left(\Pi(a_i,a_i^{*}:i \in I)\right): \Pi\ \text{is a finite degree monomial}\}$ are called their \textit{joint $*$-moments}.
For simplicity, we shall often write moments instead of $*-$moments.
 
Suppose $a\in \mathcal{A}$ is a \textit{self-adjoint} element, that is, $a^*=a$. Suppose there a \textit{unique} probability law $\mu_a$ on $\mathbb{R}$ such that 
\begin{equation}\label{eq:problaw}
\varphi(a^n)=\int_{\mathbb{R}} x^n\mu_{a}(dx), n =1, 2, \ldots.
\end{equation}
Then  $\mu_a$ is called the \textit{probability law of $a$}. 
If $\mu_a$ satisfies (\ref{eq:problaw}) and is compactly supported, then it is indeed unique.

Let $(\mathcal{A},\varphi)$ be a $*$-probability space. Define a sequence of \textit{multi-linear functionals}  $(\varphi_{n})_{n \in \mathbb{N}}$ on $\mathcal{A}^{n}$ via
\begin{eqnarray} \label{eqn:mmt4.1}
\varphi_{n} (a_{1},a_{2},\ldots, a_{n}):= \varphi(a_{1}a_{2}\cdots a_{n}).
\end{eqnarray}

 Extend $\{\varphi_n, n\geq 1\}$ to $\{\varphi_{\pi},  \pi \in NC(n), n \geq 1\}$ \textit{multiplicatively} 
in a recursive way by the following formula. 
If $\pi = \{V_{1}, V_{2}, \ldots, V_{r} \} \in NC(n)$, then
\begin{eqnarray} \label{eqn:multi4.1}
\varphi_{\pi}[a_{1},a_{2},\ldots, a_{n}]:= \varphi(V_1)[a_{1},a_{2},\ldots,a_{n}]\cdots \varphi(V_r)[a_{1},a_{2},\ldots,a_{n}],
\end{eqnarray}
where 
$\varphi(V)[a_{1},a_{2},\ldots, a_{n}]:= \varphi_{s}(a_{i_1},a_{i_2},\ldots,a_{i_s})=\varphi(a_{i_1}a_{i_2}\cdots a_{i_s})$ for 
$V=\{i_1, i_2, \ldots , i_s\}$ with $i_1 < i_2 < \cdots < i_s$. %Let $(\mathcal{A},\varphi)$ be a non-commutative probability space. 
Note that the order of the variables have been preserved. Also note that the two types of braces $(\ \  )$ and $[\ \  ]$ in (\ref{eqn:mmt4.1}) and (\ref{eqn:multi4.1}) have different uses. In particular, if $\one_n$ denotes the $1$-block partition of $\{1, \ldots , n\}$ then 
\begin{equation} \varphi_{\one_{n}}[a_1,a_2,\ldots, a_n] = \varphi_{n}(a_1,a_2,\ldots, a_n) = \varphi(a_1a_2\cdots a_n).
\end{equation}
The \textit{joint free cumulant} of order $n$ of $(a_1, a_2,\ldots, a_n)$ is 
\begin{equation} \label{eqn:freecumdefn2}
\kappa_{n}(a_{1},a_{2},\ldots, a_{n}) = \sum_{\sigma \in NC(n)} \varphi_{\sigma}[a_{1},a_{2},...,a_{n}]\mu(\sigma, \one_{n}),
\end{equation}
where $\mu$ is the M\"{o}bius function of $NC(n)$. 
It is called a \textit{mixed free cumulant} if at least \textit{one} pair $a_i, a_j$ are different and $a_i \neq  a_j^{*}$ for some $i\neq j$. 
For any $\epsilon_i \in\{1,*\}, 1 \leq i \leq n$,  $\kappa_n (a^{\epsilon_1}, a^{\epsilon_2},\ldots, a^{\epsilon_n})$ is called a \textit{marginal free cumulant} of order $n$ of $\{a,a^{*}\}$. For a self-adjoint element $a$, 
$$\kappa_{n}(a):=\kappa_n(a,a,\ldots, a)$$ is called the $n$-th free cumulant of $a$. 
The free cumulants $\kappa_n$ in (\ref{eqn:freecumdefn2})	are also multi-linear. In particular, for any variables $\{a_i, b_i\}$ and constants $\{c_i\}$, 
\begin{eqnarray*}
\kappa_n(a_1+b_1, \ldots, a_n+b_n)&=&\kappa_n(a_1, \ldots, a_n)+\kappa_n(a_1, b_2, a_3, \ldots, a_n)%\\
%& &\text{\hspace{.51in}} 
+\cdots +\kappa_n(b_1,  \ldots, b_n),\ \ \text{and}\\
\kappa_n(c_1a, c_2a, \ldots, c_na)&=&c_1c_2\cdots c_n\kappa_n(a, a, \ldots, a)=c_1c_2\cdots c_n\kappa_n(a). \end{eqnarray*}
  
Let $\{\kappa_{\pi}: \pi \in NC(n), n\geq 1\}$ be the multiplicative extension of $\{\kappa_n, n\geq 1\}$. 
By using (\ref{eqn:freecumdefn2}), for any $\pi \in NC(n)$, \ $n \geq 1$, 
\begin{equation}\label{eqn: freecumdefn1}
\kappa_{\pi}[a_{1},a_{2},...,a_{n}]:= \sum_{\sigma \in NC(n):\ \sigma \leq \pi} \varphi_{\sigma}[a_{1},a_{2},\ldots, a_{n}]\mu(\sigma, \pi).
\end{equation}
Note that
%\begin{eqnarray}
\begin{eqnarray*}\kappa_{\one_n}[a_1,a_2,\ldots, a_n] &=& \kappa_{n}(a_1,a_2,\ldots, a_n), \\
% \ \text{and} \ \ %\nonumber \\
\kappa_2(a_1, a_2)&=&\varphi(a_1a_2)-\varphi(a_1)\varphi(a_2).
\end{eqnarray*} %\nonumber 
%where $\mu$ is the  M\"{o}bius function of $NC(n)$.  
%\end{eqnarray}
%The number $\kappa_2(a_1, a_2)$ is called the \textit{covariance} between $a_1$ and $a_2$. Note that, in general, 
%$$\kappa_2(a_1, a_2) \neq \kappa_2(a_2, a_1).$$ Equality holds if and only if $\varphi$ is tracial. 
\noindent Using the M\"{o}bius function $\mu$, it can be shown that  
\begin{eqnarray}
\varphi(a_1a_2\cdots a_n) &=& \sum_{\sigma\in NC(n): \  \sigma \leq \one_n} \kappa_{\sigma}[a_1,a_2,\ldots, a_n], \label{eqn:momcum}\\
%\label{eqn: m1to14.1}
%\begin{eqnarray}\label{eqn:momfreecum}
\varphi_{\pi}[a_1, a_2, \ldots, a_n] &=& \sum_{\sigma\in NC(n): \ \sigma \leq \pi} \kappa_{\sigma}[a_1,a_2,\ldots, a_n].\nonumber %\label{eqn: m1to14.22}
\end{eqnarray}

\noindent In particular, (\ref{eqn:freecumdefn2}) and  (\ref{eqn:momcum}) establish a one-to-one correspondence between free cumulants and moments. These relations will be referred  to as \textit{moment-free cumulant relations}. 

Variables $\{a_i: i \in I\}$ are said to be free if their mixed free cumulants are all zero. Similarly, variables $\{a_i^{(p)}:\ i \in I\}$ are said to be asymptotically free if, as $p \to \infty$, they converge jointly to $\{a_i:\ i \in I\}$  which are free.

%\section{Free probability preliminaries}
%\textbf{Incomplete}
%Since we shall work with $p\times p$ matrices, we 
\section{Main results} 
%We shall work with the $*$-probability space $(\mathcal{M}_p(\mathbb{C}), \varphi_p)$ of 
%$p\times p$ random matrices: 
 %\begin{equation}\label{eq:alg}
%\mathcal{M}_p(\mathbb{C})=\{A: A=((a_{ij}))_{p\times p} \ \text{and}\ \ \ {\rm{E}}|a_{ij}|^k < \infty\ \ \text{for all}\ \ i, j, k\},
%\end{equation}
%where the tracial and positive state $\varphi$ is defined as 
%$$\varphi_p(A)=p^{-1}{\rm{E}}\text{Trace}(A).$$
%$\boldsymbol{[}k\boldsymbol{]} = \{1,2,\ldots, k\}$.
%??Define just before using for the first time. Not here.

%???change these notation. No need to list either. use them as they arise. 
%\begin{eqnarray}
%m_y &=& \text{Mar\v{c}enko-Pastur random variable with parameter $y$}, \nonumber \\
%g_{\lambda, \mu} &=& \text{Compound free Poisson random variable with rate $\lambda$ and jump distribution $\mu$}, \nonumber  \\
%\delta_{\alpha} &=& \text{degenerated random variable at $\alpha$}, \nonumber \\
%b_{p} &=& \text{Bernoulli random variable with success probability $p$}, \nonumber\\
%\mu_{a} &=& \text{Probability distribution of the random variable $a$}. \nonumber 
%\end{eqnarray}
%Note that $g_{\lambda, \mu_{\delta_{\alpha}}}$ is a free Poisson random variable with rate $\lambda$ and jump size $\alpha$.  Also for $\alpha \in \mathbb{R}$,  $\alpha m_y$ and $g_{y^{-1},\mu_{\delta_{\alpha y}}}$ have identical probability distributions. 

%\section{Convergence of sample cross-covariance matrices}
Let $X_{l}$ and  $Y_{l}$ be $p \times n_l$ random matrices for all $1 \leq l \leq t$.  The $(i,j)$-th entries of $X_{l}$ and $Y_{l}$ are denoted  by $X_{ij, (n_l, p)}^{(l)}$  and $Y_{ij, (n_l, p)}^{(l)}$  respectively. Note that all entries of these matrices may change with $p$ and $n_l$. We shall often suppress this dependence by dropping the subscripts $p$ and $n_l$. We make the following assumption on these entries.
\vskip 5pt

\noindent \textbf{Assumption I}
\vskip 2pt
\noindent (a) For every $p\geq 1$ and $n_l\geq 1$, the pairs of random variables $(X_{ij, (n_l, p)}^{(l)}, Y_{ij, (n_l, p)}^{(l)})$ are independent across $1 \leq i \leq p$, $1 \leq j \leq n_l$, $1 \leq l \leq t$.
\vskip 2pt
\noindent (b) For all $1 \leq i \leq p$, $1 \leq j \leq n_l$, $1 \leq l \leq t$, $p, n_l \geq 1$, 
%%\begin{eqnarray}
%%&& {\rm{E}}(X_{ij, (n, p)}^{(l)}) =  {\rm{E}}(Y_{ij, (n, p)}^{(l)}) = 0, \nonumber \\
%%&& {\rm{E}}[(X_{ij,( n, p)}^{(l)})^2] = {\rm{E}}[(Y_{ij, (n, p)}^{(l)})^2] = 1,\nonumber \\
%%&& {\rm{E}}(X_{ij, (n, p)}^{(l)}Y_{ij, (n, p)}^{(l)}) = \rho. \nonumber 
%%\end{eqnarray}
\begin{eqnarray}
&& {\mathbb{E}}(X_{ij}^{(l)}) =  {\mathbb{E}}(Y_{ij}^{(l)}) = 0, \ \ %\nonumber \\
%&& 
{\mathbb{E}}[(X_{ij}^{(l)})^2] = {\mathbb{E}}[(Y_{ij}^{(l)})^2] = 1, \ \ %\nonumber \\
%&& 
{\mathbb{E}}(X_{ij}^{(l)}Y_{ij}^{(l)}) = \rho_l. \nonumber 
\end{eqnarray}
\vskip 2pt
\noindent (c) $\displaystyle{\sup_{p, n_l \geq 1} \sup_{1 \leq i \leq p }\sup_{1 \leq j \leq n_l}} {\mathbb{E}}\big(|X_{ij}^{(l)}|^k + |X_{ij}^{(l)}|^k \big) < B_k < \infty$ for all $1 \leq l \leq t$ and $k \geq 1$. 
\vskip 2pt
\noindent (d) $n_l = n_l(p) \to \infty$ as $p \to \infty$ such that $n_l^{-1}p \to y_l < \infty$.
\vskip 10pt
\noindent Define the sample cross-covariance matrices
%\begin{eqnarray}
$C_{l} = \frac{1}{n_l} X_{l} Y_{l}^{*}$ for all $1 \leq l \leq t$. %\nonumber 
%\end{eqnarray}
\subsection{Convergence of $\{C_l: 1 \leq l \leq t\}$ when $y_l \neq 0$}
The symbol $\delta$ will be used in two senses: $\delta_{{x}}$ will denote the probability measure which puts all mass at $x$;  on the other hand, $\delta_{xy}$ is defined as
\begin{eqnarray*}
\delta_{xy}= \begin{cases} 
1  \ \ {\rm if}\ \ x=y\\
0  \ \ {\rm if}\ \ x\neq y.
\end{cases}
\end{eqnarray*}
A compound free Poisson variable with rate $\lambda$ and jump distribution $\mu$ will be denoted by ${\rm{P}}(\lambda, \mu)$ or by 
${\rm{P}}(\lambda, X)$ where $X$ is a random variable with probability law $\mu$.

The following variable shall appear in the limit: 

\begin{definition}An element $c$ of a $*$-probability space will be called a \textit{cross-covariance variable} with parameters $\rho$ and $y, 0 < y <\infty$,  if its free cumulants are given by 
%Here is a formula for these free cumulants: let $\boldsymbol{\eta}_k = (\eta_1, \eta_2,\ldots, \eta_k)^\prime \in \{1,*\}^k$ and $k \geq 1$. Then the $k$-th order free cumulant of $\{c_l\}$ is given by:
\begin{eqnarray*}
\kappa_k(c^{\eta_1},c^{\eta_2},\ldots, c^{\eta_k})= \begin{cases} 
y^{k-1}\rho^{S(\boldsymbol{\eta}_k)}  \ \ {\rm if}\ \ \rho\neq 0\\
y^{k-1}\delta_{S(\boldsymbol{\eta}_k) 0}  \ \ {\rm if}\ \ \rho=0.
\end{cases}
\end{eqnarray*}
where 
$$S(\boldsymbol{\eta}_k):= S(\eta_1, \ldots , \eta_k):={\displaystyle{\sum_{\substack{1 \leq u \leq k \\ \eta_{k+1}=\eta_1}} 
\delta_{\eta_{u}\eta_{u+1}}}}.$$
%\begin{eqnarray}
%\kappa_k(c^{\eta_1},c^{\eta_2},\ldots, c^{\eta_k}) = y^{k-1}(\rho^{\Upsilon(\boldsymbol{\eta}_k)}(1-\delta_{\rho 0}) + \delta_{\rho 0}\delta_{\Upsilon(\boldsymbol{\eta}_k) 0})\ \ \text{where}\ \Upsilon(\boldsymbol{\eta}_k) = {\displaystyle{\sum_{\substack{1 \leq u \leq k \\ j_{k+1}=j_1}} \delta_{\eta_{j_u}\eta_{j_{u+1}}}}}. \nonumber
%\end{eqnarray} 
\end{definition}
%It may be noted that all odd-order free cumulants of a cross-covariance variable vanish. 
It is interesting to note what happens for the two special case $\rho=1$ and $\rho=0$. 

(i) \ When $\rho=1$, $c$ is self-adjoint and its free cumulants are given by:
$$\kappa_k(c)=y^{k-1}.$$
Thus $c$ is a compound free Poisson variable ${\rm{P}}(y^{-1}, \delta_{\{y\}})$ with rate $1/y$ and jump distribution $\delta_{\{y\}}$. The moments of $c$ determine the Mar\v{c}enko-Pastur probability law with parameter $y$. 

(ii) \ When $\rho=0$,
%. Then formula ?? for the free cumulant the free Theorem \ref{thm: 1}(b) implies that 
% for each $1 \leq l \leq t$, 
all odd order free cumulants of $c$ vanish. Moreover, only alternating free cumulants of even order survive, and are given by 
\begin{equation}
\kappa_{2k}(c, c^{*},c, c^{*}\ldots, c^{*})  = \kappa_{2k}(c^{*}, c, c^{*}, c,\ldots, c) = y^{2k-1},\ \forall\ k \geq 1. \label{eqn: traceR}
\end{equation}
%\begin{eqnarray}
%&&\kappa_{2k}(c, c^{*},c, c{*}\ldots,^c^{*}) \nonumber \\
%&& \hspace{2 cm} = \kappa_{2k}(c^{*}, c, c^{*}, c,\ldots, c) = y^{2k-1},\ \forall\ k \geq 1. \label{eqn: traceR}
%\end{eqnarray}
Hence $c$ is a \textit{tracial $R$-diagonal} element. See \cite{NS2006} for the definition and  properties of $R$-diagonal elements. 
It is related to a Mar\v{c}enko-Pastur variable $M_y$ in the following way. 
Let $\tilde{M}_y$ be a \textit{symmetrized Mar\v{c}enko-Pastur} variable with parameter $y$. That is, 
\begin{eqnarray}
\kappa_{k}(\tilde{M}_y) =  \begin{cases} \kappa_{k}(\tilde{M}_y) =y^{k-1}\ \ \text{if $k$ is even},  \\
0\ \ \text{if $k$ is odd}.
\end{cases}\nonumber 
\end{eqnarray}
%all its odd free cumulants are $0$,  and the even free cumulants are same as those of a 
% Mar\v{c}enko-Pastur variable with paramater $y$. Thus  
%$$\kappa_{2k}(X) = y^{2k-1},\  \text{for \a all}\  k \geq 1.$$  %Let $M_y$ and $\tilde{M}_y$ be respectively Mar\v{c}enko-Pastur and symmetrized Mar\v{c}enko-Pastur variables. Also let $Z$ be a random variable with mass $0.5$ at $1$ and $-1$. Moreover, assume $M_y$ and $Z$ are independent and commutative. Then $ZM_y$ and $\tilde{M}_y$ are identically distributed. 
Suppose $u$ is Haar unitary and free of $M_y$ and  $\tilde{M}_y$, then it is easy to see that the $\ast$-distributions of the three variables $c$ (where $\rho=0$),  $u\tilde{M}_y$, and $uM_y$ are identical.
%Let $m_y$ denotes a Mar\v{c}enko-Pastur random variable with parameter $y$ and $Z$ be a random variable with mass $0.5$ at $1$ and $-1$. Also assume $m_y$ and $Z$ are independent and commutative. We refer the probability distribution of $m_yZ$ as symmetrized Mar\v{c}enko-Pastur law. 

%It is easy to see that 

%??Question. can $c$ be expressed in terms of some ``known'' variables, as in the circular variable case??? (NOT sure. Cannot do it)

%$m_y$ denotes a Mar\v{c}enko-Pastur random variable with parameter $y$ and 
%$\mu_{a}$ be the Probability distribution of the random variable $a$.  

The following theorem states the joint convergence and asymptotic freeness of independent cross-covariance matrices when $y_l \neq 0,\ \forall\ 1 \leq l \leq t$. 
\begin{theorem} \label{thm: 1}
Suppose $(X_l, Y_l)$, $1\leq l \leq t$ are pairs of $p\times n_l$ random matrices with correlation parameters $\{\rho_l\}$ and whose entries satisfy Assumption I. Suppose $n_l, p\to \infty$ and  $p/n_{l}\to y_l, 0 < y_l < \infty$ for all $l$. 
%with $y>0$. 
Then the following statements hold for the $p\times p$ cross-covariance matrices $\{C_{l}:=n^{-1}X_lY_l^{*}\}$.
\vskip 3pt
\noindent (a) As elements of the $C^{*}$ probability space
$(\mathcal{M}_p(\mathbb{C}), \varphi_p)$,  $\{C_{l}\}$ converge jointly to free variables 
$\{c_l\}$  where each $c_l$ is a cross-covariance variable with parameters $(\rho_l, y_l)$. The convergence also holds with respect to the state $\tilde\varphi_p$ almost surely. The limiting state is tracial. \vskip 3pt

\noindent (b) Let $\Pi:=\Pi(\{C_{l}:\ 
1 \leq l \leq t\})$ be  any  finite degree real matrix polynomial  in $\{C_{l}, C_{l}^{*}:\ 1 \leq l \leq t\}$ and which is symmetric. Then the ESD of $\Pi$
%(\{C_{l}:\ 1 \leq l \leq t\})$ 
%exists almost surely ??have you proved almost surely??) define ESD LSD??? and is the same as 
converges weakly almost surely to the compactly supported probability law of the self-adjoint variable 
$\Pi(\{c_{l}:\ 1 \leq l \leq t\})$. 
%$\Pi(\{c_{l,y,\rho}:\ 1 \leq l \leq t\})$.
\end{theorem}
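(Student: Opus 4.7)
The plan is to prove part (a) by the moment method and then deduce part (b) by a Carleman-plus-Borel-Cantelli argument. For a monomial $\Pi = C_{l_1}^{\eta_1}\cdots C_{l_k}^{\eta_k}$ with $\eta_s\in\{1,*\}$, I would expand $p^{-1}\mathbb{E}\,{\rm Tr}(\Pi)$ as a sum over row-indices $i_1,\ldots,i_k\in[p]$ and column-indices $j_s\in[n_{l_s}]$. Each factor $C_l$ contributes the pair of letters $X_l\cdot \bar Y_l$ and each $C_l^{*}$ contributes $Y_l\cdot \bar X_l$, so the $2k$ random entries appearing in the expanded trace can be grouped by coincidence of their $(l,i,j)$-coordinates into a partition $\sigma$ of $[2k]$. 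By independence across $(i,j)$ and across $l$, the expectation factors over the blocks of $\sigma$.

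Using the uniform moment bound in Assumption I(c), only partitions whose blocks all have size at least $2$ survive in the limit, and a standard genus-type argument shows that the leading contribution comes precisely from non-crossing pair partitions $\sigma$ compatible with the alternating $i$/$j$-index pattern forced by the shape of $C_l$ and $C_l^{*}$. Independence across $l$ forces the two endpoints of any pair to carry the same label $l$; each admissible pair contributes $1$ when it joins two $X$-letters or two $Y$-letters of the same matrix, and $\rho_l$ when it joins an $X_l$ with a $Y_l$. After counting the free $i$- and $j$-indices per block against the normalization $p^{-1}\prod_s n_{l_s}^{-1}$, the surviving sum reduces to the moment-cumulant expansion over non-crossing partitions with free cumulants $y_l^{k-1}\rho_l^{S(\boldsymbol{\eta}_k)}$ (respectively $y_l^{k-1}\delta_{S(\boldsymbol{\eta}_k)\,0}$ when $\rho_l=0$), matching the definition of a cross-covariance variable. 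Asymptotic freeness follows from the same picture: any non-trivial mixed cumulant would require a pair whose two endpoints carry different labels $l$, which independence forbids.

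To upgrade convergence in expectation to almost-sure convergence under $\tilde\varphi_p$, I would show ${\rm Var}(p^{-1}{\rm Tr}(\Pi))=O(p^{-2})$ by running the same combinatorial machinery on second-order terms (only ``decoupled'' pairings contribute at leading order; pairings that bridge the two copies of $\Pi$ carry an extra factor of $p^{-2}$) and then invoke Borel-Cantelli. Part (b) is then immediate: the free cumulant bound yields geometric growth of the joint moments of $\Pi$, Carleman's criterion supplies a unique compactly supported limit law, and almost-sure convergence of the ESD of the symmetric polynomial $\Pi$ follows from almost-sure convergence of all its normalized trace moments. The principal obstacle is the combinatorial identification in the second paragraph: one must verify that an arbitrary word $\eta_1\cdots\eta_k$ admits only non-crossing pair partitions of the correct alternating form, and that the total $\rho_l$-exponent contributed within each Kreweras-type block is exactly $S(\boldsymbol{\eta}_k)$ read off from the corresponding cyclic sub-word; the special cases $\rho_l=1$ (recovering the Mar\v{c}enko-Pastur cumulants $\kappa_k = y_l^{k-1}$) and $\rho_l=0$ (alternating $R$-diagonal cumulants) serve as useful sanity checks.
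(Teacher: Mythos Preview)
Your proposal is correct and follows essentially the same route as the paper: trace expansion over row/column indices, reduction via a connected bipartite-graph count to non-crossing pair partitions of $[2k]$ that match odd with even positions, identification of the per-pair contribution as $1$ or $\rho_l$ according to whether the two letters are of the same $X/Y$ type, and then passage from $\mathrm{NC}_2(2k)$ to $\mathrm{NC}(k)$ to read off the free cumulants and freeness. The paper makes the last step explicit via a bijection $f:\mathrm{NC}_2(2k)\to\mathrm{NC}(k)$ and verifies $T(\pi)=\mathcal{T}(f(\pi))$, which is exactly the ``principal obstacle'' you flag; for the almost-sure upgrade the paper uses a fourth-moment bound rather than your variance bound, but either suffices for Borel--Cantelli.
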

Before we prove the theorem, let us make some remarks and give a few examples.   
\begin{remark} 
%If $c_{l,y,\rho}$ ($c_l$ in brief) denote the limits of $C_l$, then its free-cumulants are as follows: 
1. Since $\{c_l\}$ are free in Theorem \ref{thm: 1}, their joint free cumulants can be written in principle using the marginal free cumulants. 

\noindent 2. \cite{CC2004} proved the asymptotic freeness of the sample covariance matrices for the case $y_l \neq 0,\ \forall\ 1 \leq l \leq t$. This result follows 
from Theorem \ref{thm: 1}(a) if we take $\rho = 1$,  since in that case, each $X_l=Y_l$ almost surely and each $C_l$ is a sample covariance matrix. 
%Suppose Assumption I holds and $y>0$.  Then the 
%and Theorem 3.1 of \cite{BB2021}.
%Suppose Assumption I holds and $y>0$.  
% and their limits are self-adjoint. 
%$\{C_l:\ 1 \leq l \leq t\}$  independent sample covariance matrices. 
%Hence 
%Theorem \ref{thm: 1}(a) implies that these independent sample covariance matrices are asymptotically freeness of independent sample covariance matrices. 
%are asymptotically free. For each $1 \leq l \leq t$,  the matrix $C_l$  is also symmetric and hence its limit $c_{l,y,1}$ is self adjoint. Theorem \ref{thm: 1}(b) implies that 
%The free cumulants of $c_l$ \begin{eqnarray}
%\kappa_k(c_{l,y,1}) = y^{k-1},\ \ \forall\ k \geq 1. \nonumber 
%\end{eqnarray}
%Thus, by  Theorem \ref{thm: 1}(c),  the LSD of $C_l$ is the Mar\v{c}enko-Pastur law with parameter $y$. 

%\noindent 3. The convergence can be upgraded to almost sure convergence, with respect to the state $\tilde{\varphi}_p$. This is done by an estimation of the higher moments and applying Borel-Cantelli Lemma. We omit the details. See ???for similar computations.   
\end{remark}

We shall use the following notation: 
\begin{eqnarray*}
\#A &=& \text{Number of elements of the set}\ A,\\
 {\rm{NC}}(k) &=& \text{Set of non-crossing partitions of} \ \{1, \ldots , k\},\\
K(\pi)&=& \text{Kreweras complement of a non-crossing partition}\ pi, \\
|\pi| &=& \text{Number  of  blocks of  the  partition}\ \pi. 
\end{eqnarray*} For two random variable $X$ and $Y$, $X \stackrel{\mathcal{D}}{=} Y$ will mean that they have identical probability laws. In the following discussion, if we are dealing with only one sequence of matrices, we shall drop the index $l$. Similarly, if we are working with any two indices, then without loss of generality, we shall take them to be $1$ and $2$. 

\begin{Example}
%\begin{cor} \label{cor: 1} \textbf{Special cases for $\rho=1$ and $0$}. This should be remark, not corollary. 
%\begin{enumerate}
%\item 
Suppose $\rho_l=0$, $l=1, 2$. Since  $c_1$ and $c_2$ are then free and tracial $R$-diagonal, 
by Theorem 15.17 of \cite{NS2006}, 
$c_1c_2^{*}$ is also tracial $R$-diagonal. 
The free cumulants of $c_1c_2^{*}$  can be calculated using this fact as follows. First note that all free cumulants except 
the even order alternating free cumulants are $0$. These alternating free cumulants are given by
%??drop this???what is the point of this part??? or maybe give as example but then we should do the case where $y's$ are different... By Theorem \ref{thm: 1}(a) and Corollary  \ref{cor: 1}(2),  for each $1 \leq l \leq t$, $c_{l,y,0}$ is tracial R-diagonal element with (\ref{eqn: traceR})
%and  $\{c_{l,y,0}:\ 1 \leq l \leq t\}$ are free across $1 \leq l \leq t$. Then by Theorem 15.17 of \cite{NS2006}, $c_{1,y,0}c_{2,y,0}^{*}$ is tracial R-diagonal element with
\begin{eqnarray}
&& \kappa_{2k} (c_1c_2^{*}, c_2c_1^{*},\ldots, c_1c_2^{*}, c_2c_1^{*}) = \kappa_{2k} ( c_2c_1^{*},\ldots, c_1c_2^{*}, c_2c_1^{*}, c_1c_2^{*})  \nonumber \\
&=& \sum_{\substack{\pi,\sigma \in {\rm{NC}}(k)\\ \sigma \leq K(\pi)}} \left(\prod_{V\in \pi} y_1^{2(\# V)-1} \right) \left( \prod_{W \in \sigma} y_2^{2 (\# W)-1}\right),\nonumber \\
&& \hspace{3 cm} \ \text{(by (\ref{eqn: traceR}) and Exercise 15.24(2) of \cite{NS2006}}) \nonumber \\
&=&  %\sum_{\substack{\pi,\sigma \in {\text{NC}}(k)\\ \sigma \leq K(\pi)}}  y^{4k-\# \pi - \# \sigma} = 
y_1^{k}y_2^{k}\sum_{\pi \in {\rm{NC}}(k)} y_1^{k-|\pi|} \sum_{\sigma \leq K(\pi)} y_2^{k - |\sigma|} \nonumber \\
&=& \sum_{\pi \in {\rm{NC}}(k)} \kappa_{\pi}[y_1M_{y_1}, \ldots, y_1M_{y_1}] \varphi_{K(\pi)}[y_2M_{y_2},\ldots, y_2M_{y_2}]
%\nonumber \\
%&& \hspace{4 cm}
 \ \ \text{($M_{y_1}$ and $M_{y_2}$ are free)} % and both of them have identical probability distribution as $yM_y$} % $\mu_{ym_{y}}$} 
\nonumber \\
&=& \varphi(({y_1M_{y_1}y_2M_{y_2}})^{k}) = \varphi(((\sqrt{y_1M_{y_1}}y_2M_{y_2}\sqrt{y_1M_{y_1}})^{1/2})^{2k}) %=  y^{2k}\lim \frac{1}{p}{\rm{E Tr}}((n^{-2}Y_1Y_1^{*}Y_2Y_2^{*})^k) \nonumber \\
%&=&  y^{2k}\lim \frac{n}{p}\left(\frac{p}{n}\right)^{2k} \frac{1}{n} {\rm{E Tr}}((p^{-2}Y_1^{*}Y_2Y_2^{*}Y_1)^k)  %\nonumber \\
%&=& 
%= y^{4k-1}\varphi(((c_{1})(c_{1})^{*})^k). 
\label{eqn: traceR1}
\end{eqnarray}
%In general, for all $1 \leq l_1 \neq l_2 \leq t$, $c_{l_1,y,0}c_{l_2,y,0}^{*}$ is tracial R-diagonal element with (\ref{eqn: traceR1}).
%\end{enumerate}
%Let ${\rm{P}}(\lambda, \mu)$ denote a compound free Poisson variable with rate $\lambda$ and jump distribution $\mu$. We shall also denote a compound free Poisson variable by ${\rm{P}}(\lambda, X)$ where $X$ is a random variable with probability law $\mu$. 
 %is identical with the distribution of the random variable $X$.  
Thus for $\rho=0$,  the $\ast$-distributions of $c_1c_2^{*}$ and $u {\rm{P}}(1,(\sqrt{y_1M_{y_1}}y_2M_{y_2}\sqrt{y_1M_{y_1}})^{1/2})$ are identical where  $u$ is  Haar unitary and is free of ${\rm{P}}(1,(\sqrt{y_1M_{y_1}}y_2M_{y_2}\sqrt{y_1M_{y_1}})^{1/2})$.  Also note that $y M_y$ is itself a compound free Poisson variable % \stackrel{\mathcal{D}}{=} 
${\rm{P}}(y^{-1},\delta_{\{y^2\}})$.
% are identically distributed.
%and ${\rm{P}}(1,(\sqrt{y_1M_{y_1}}y_2M_{y_2}\sqrt{y_1M_{y_1}})^{1/2})$ are free.
\end{Example}
%\end{cor}

%\begin{cor} \label{cor: 2} \textbf{LSD of some symmetric polynomials}.

%\begin{remark} \textbf{LSD of some symmetric polynomials}.

\begin{Example} By Theorem \ref{thm: 1}(b), the ESD of $C+C^{*}$ converges weakly almost surely to the law whose free cummulants are given by
%By Theorem \ref{thm: 1}(c) , its free cumulants 
%are given by ???please change notation. I dont like upsilon. Also I think dividing by 2 will make the formula nicer and at the same time agree with MP law. ..
\begin{eqnarray}
\kappa_{k}(c+c^{*}) =  y^{k-1}\sum_{\boldsymbol{\eta}_k \in \{1,*\}^k}(\rho^{S(\boldsymbol{\eta}_k)}(1-\delta_{\rho 0}) + \delta_{\rho 0}\delta_{S(\boldsymbol{\eta}_k) 0}),\ \forall\ k \geq 1. \nonumber 
\end{eqnarray}
%%\begin{eqnarray}
%%\kappa_{k}(c_{1,y,\rho}+c_{1,y,\rho}^{*}) =  y^{k-1}\sum_{\boldsymbol{\eta}_k \in \{1,*\}^k}(\rho^{\Upsilon(\boldsymbol{\eta}_k)}(1-\delta_{\rho 0}) + \delta_{\rho 0}\delta_{\Upsilon(\boldsymbol{\eta}_k) 0}),\ \forall\ k \geq 1. \nonumber 
%%\end{eqnarray}
%%Suppose Assumption I holds and $y>0$. Then by Theorem \ref{thm: 1}(c), for $1 \leq l \leq t$, the LSD of 
%%$X_lY_l^{*} + Y_lX_l^{*}$ exists almost surely and they are identical across $1 \leq l \leq t$.  Moreover, by Theorem \ref{thm: 1}(c), this LSD is identical with the probability distribution of $(c_{1,y,\rho}+c_{1,y,\rho}^{*})$ and its free cumulants are
%Let this LSD is identical with the probability distribution of $z_{1,y,\rho}$. Then Theorem \ref{thm: 1}(c) implies that the free cumulants
% $Z_1 = X_1Y_1^{*} + Y_1X_1^{*}$. l imit  $z_{1,\rho}$
%%For the special case $\rho =1$, $C=C^*$ and the free cumulants of $c$ are 
%%\begin{eqnarray}
%%\kappa_{k}(c) =  y^{k-1},\ \forall\ k \geq 1.\hspace{0.5cm} \nonumber 
%%\end{eqnarray}
%%This is nothing but the free cumulants of the Mar{\v{c}}enko-Pastur law.
%% of $c$ distributions of $c_{1,y,1}+c_{1,y,1}^{*}$ and $g_{y^{-1},\mu_{\delta_{2y}}}$ (i.e. $2m_y$) are identical.
%free Poisson with rate $y^{-1}$ and jump size $2y$. Equivalently,   $c_{1,y,1}+c_{1,y,1}^{*}$ and $2g_y$ have identical probability distribution where $g_y$ is the Mar\v{c}enko-Pastur distribution with parameter $y$.
For $\rho =0$, the  above formula can be simplified to: 
%LSD of $C+C^{*}$ has the following free cumulants:
\begin{eqnarray}
\kappa_{k}(c+c^{*}) =  \begin{cases} 2y^{k-1}\ \ \text{if $k$ is even}  \\
0,\ \ \text{if $k$ is odd}
\end{cases},\ \forall\ k \geq 1.\hspace{0.5cm} \nonumber 
\end{eqnarray}
%%\begin{eqnarray}
%%\kappa_{k}(c_{1,y,0}+c_{1,y,0}^{*}) =  \begin{cases} 2y^{k-1}\ \ \text{if $k$ is even}  \\
%%0,\ \ \text{if $k$ is odd}
%%\end{cases},\ \forall\ k \geq 1.\hspace{0.5cm} \nonumber 
%%\end{eqnarray}
%Therefore, $(c_{1,y,0}+c_{1,y,0}^{*})$ and 

%Let $Z$ be a random variable with mass $0.5$ at $1$ and $-1$. Also assume $m_y$ and $Z$ are independent and commutative. We refer $\mu_{m_yZ}$ as symmetrized Mar\v{c}enko-Pastur law.  It is easy to see that 
\noindent Thus the LSD of $C+C^{*}$ is the free additive convolution $\mu\boxplus \mu$ where $\mu$ is the symmetrized Mar\v{c}enko-Pastur law with parameter $y$.

%This is the distribution of ???write in a better way. 
%??$g_{2y^{-1},\mu_{\delta_{y}}}(2b_{0.5}-1)$  
%$XY$ where $X$ is a Bernoulli random variables taking values $0, 1$ with probability $1/2$ each and $Y$ ...and $X$ and $Y$ are independent. 

%have identical probability distributions, where $b_{0.5}$ and $g_{2y^{-1},\mu_{\delta_y}}$ are independently distributed and commutative. 
\end{Example}

%%\item ??drop this one???
%%Suppose Assumption I holds and $y>0$. Then by Theorem \ref{thm: 1}(c), the LSD of $\sum_{l=1}^{t}(X_lY_l^{*} + Y_lX_l^{*})$ exists almost surely and is identical with the probability distribution of $\sum_{l=1}^{t}(c_{l,y,\rho} + c_{l,y,\rho}^{*})$.  Moreover, as $\{c_{l,y,\rho} + c_{l,y,\rho}^{*}:\ 1 \leq l \leq t\}$ are free across $1 \leq l \leq t$, we have
%%\begin{eqnarray}
%%\mu_{\sum_{l=1}^{t}(c_{l,y,\rho} + c_{l,y,\rho}^{*})} \stackrel{\mathcal{D}}{=} \displaystyle{\boxplus_{_{l=1}}^{t}}\mu_{_{c_{l,y,\rho} + c_{l,y,\rho}^{*}}}. \nonumber 
%%\end{eqnarray}

%\item Suppose Assumption I holds and $y>0$. Then by Theorem \ref{thm: 1}(c), 

\begin{Example} By Theorem \ref{thm: 1}(b), the ESD of $CC^{*}$ converges  
%$X_lY_l^{*}Y_lX_l^{*}$ exists 
almost surely. The limit law is the law of the self-adjoint variable $cc^*$. 
%and they are identical across $1 \leq l \leq t$. This LSD is also identical with the probability distribution of $c_{1,y,\rho}c_{1,y,\rho}^{*}$. 
For $\rho \neq 0$,  neither the moments nor the free cumulants of $cc^*$ 
seem to have a simple expression. 
%of $c_{1,y,\rho}c_{1,y,\rho}^{*}$ 
%obtained from  cannot be further simplified. We have simplified form 
However, we know that  when $\rho =0$, $c$ is tracial $R$-diagonal. Hence using Proposition 15.6(2) of \cite{NS2006} and (\ref{eqn: traceR}), we have 
%%\begin{eqnarray}
%%\kappa_{k}(c_{1,y,0}c_{1,y,0}^{*}) &=& \sum_{\pi \in \text{NC}(k)} \prod_{V \in \pi}  y^{2|V|-1} = \sum_{r=1}^{k} |\{\pi \in {\text{NC}}(k):\ \pi\ \text{has $r$ blocks}\}|y^{2k-r} \nonumber \\
%%&=&  \sum_{r=1}^{k}\frac{1}{r}{k-1 \choose r-1}{k \choose r-1}y^{2k-r} = \sum_{r=0}^{k-1} \frac{1}{k-r} {k-1 \choose k-r-1}{k \choose k-r-1}y^{k+r} \nonumber \\
%%&=& \sum_{r=0}^{k-1} \frac{1}{k-r} {k-1 \choose r}{k \choose r+1}y^{k+r} = \sum_{r=0}^{k-1} \frac{1}{r+1} {k-1 \choose r}{k \choose r}y^{k+r} \nonumber
%%\end{eqnarray}
\begin{eqnarray*}
\kappa_{k}(cc^{*}) &=& \sum_{\pi \in {\rm{NC}}(k)} \prod_{V \in \pi}  y^{2\#V-1}\\
& = &\sum_{r=1}^{k} \#\{\pi \in {\rm{NC}}(k):\ \pi\ \text{has $r$ blocks}\}y^{2k-r} \nonumber \\
&=&  \sum_{r=1}^{k}\frac{1}{r}{k-1 \choose r-1}{k \choose r-1}y^{2k-r}\\
& = &\sum_{r=0}^{k-1} \frac{1}{k-r} {k-1 \choose k-r-1}{k \choose k-r-1}y^{k+r} \nonumber \\
&=& \sum_{r=0}^{k-1} \frac{1}{k-r} {k-1 \choose r}{k \choose r+1}y^{k+r}= \sum_{r=0}^{k-1} \frac{1}{r+1} {k-1 \choose r}{k \choose r}y^{k+r} \nonumber
\end{eqnarray*}
which is the $k$-th moment of 
%??Use simple notation...
$yM_y$. % i.e. $g_{y^{-1}, \mu_{\delta_{y^2}}}$.  
That is, 
%Hence, for all $1 \leq l \leq t$, 
the LSD of $n^{-2}XY^{*}YX^{*}$ is the law of the compound free Poisson variable ${\rm{P}}(1, yM_y)$.
%compound free Poisson distribution with rate $1$ and jump distribution $\mu_{ym_y}$. % ??use simpler notation.  
\end{Example}

%%\item Suppose Assumption I holds and $y>0$. Then by Theorem \ref{thm: 1}(c), the LSD of $\sum_{l=1}^{t}X_{l}Y_{l}^{*}Y_{l}X_{l}^{*}$ exists almost surely and is identical with the probability distribution of $\sum_{l=1}^{t}c_{l,y,\rho} c_{l,y,\rho}^{*}$.  Moreover, as $\{c_{l,y,\rho} c_{l,y,\rho}^{*}:\ 1 \leq l \leq t\}$ are free across $1 \leq l \leq t$, by Corollary \ref{cor: 2}(3), 
%%\begin{eqnarray}
%%\mu_{\sum_{l=1}^{t}c_{l,y,\rho} c_{l,y,\rho}^{*}} \stackrel{\mathcal{D}}{=} \displaystyle{\boxplus_{_{l=1}}^{t}}\mu_{_{c_{l,y,\rho} c_{l,y,\rho}^{*}}}. \nonumber 
%%\end{eqnarray}

\begin{Example} 
By Theorem \ref{thm: 1}(b), the LSD of $n^{-2}(X_{1}Y_{1}^{*}Y_{2}X_{2}^{*}+X_{2}Y_{2}^{*}Y_{1}X_{1}^{*})$ exists almost surely.
% and they are identical across $1 \leq l_1 \neq l_2 \leq t$. This LSD is also identical with the probability distribution of $(c_{1,y,\rho}c_{2,y,\rho}^{*}+c_{2,y,\rho}c_{1,y,\rho}^{*})$. 
For $\rho \neq 0$,  the moment or free cumulant sequence of $(c_{1}c_{2}^{*}+c_{2}c_{1}^{*})$ obtained from Theorem \ref{thm: 1}(a) cannot be further simplified. However for $\rho =0$,  recall that 
%By Corollary \ref{cor: 1}(3),  
$c_{1}c_{2}^{*}$ is tracial R-diagonal elementand (\ref{eqn: traceR1}) holds. Therefore, 
\begin{eqnarray}
\kappa_{k}(c_{1}c_{2}^{*}+c_{2}c_{1}^{*}) = \begin{cases} 2\varphi(((\sqrt{y_1M_{y_1}}y_2M_{y_2}\sqrt{y_1M_{y_1}})^{1/2})^{k}) 
\ \ \text{if $k$ is even}, \\
0\ \ \text{if $k$ is odd}.
\end{cases} \nonumber 
\end{eqnarray}
Let $\tilde{{\rm{P}}}(\lambda,X)$  denote a symmetrized compound free Poisson variable{\textemdash}its odd free cumulants are $0$ and the even order free cumulants are the same as those of ${\rm{P}}(\lambda, X)$. Then clearly 
% above expression implies that 
the LSD of $n^{-2}(X_{1}Y_{1}^{*}Y_{2}X_{2}^{*}+X_{2}Y_{2}^{*}Y_{1}X_{1}^{*})$ is the free additive convolution $\nu\boxplus \nu$ where 
$\nu$ is the probability law of the self-adjoint variable $\tilde{{\rm{P}}}(1, \sqrt{y_1M_{y_1}}y_2M_{y_2}\sqrt{y_1M_{y_1}})^{1/2})$.

 %$\tilde{{\rm{P}}}(\lambda, \sqrt{y_1M_{y_1}}y_2M_{y_2}\sqrt{y_1M_{y_1}})^{1/2})\boxplus \tilde{{\rm{P}}}(\lambda, \sqrt{y_1M_{y_1}}y_2M_{y_2}\sqrt{y_1M_{y_1}})^{1/2})$ are identical.
%\begin{eqnarray}
%\kappa_{k}(c_{1,y,\rho}c_{2,y,\rho}^{*}+c_{2,y,\rho}c_{1,y,\rho}^{*}) = \begin{cases} 2y^{2k-1}\varphi(((c_{1,y^{-1},0})(c_{1,y^{-1},0})^{*})^{k/2}), \ \ \text{if $k$ is even} \\
%0,\ \ \text{if $k$ is odd}.
%\end{cases} \nonumber 
%\end{eqnarray}
%By Corollary \ref{cor: 2}(4), the probability distribution of $y^4 (c_{1,y^{-1},0})(c_{1,y^{-1},0})^{*}$ is $$\mu_{y^4g_{1,\mu_{(1/y)m_{1/y}}}} = \mu_{g_{1,\mu_{y^{3}m_{1/y}}}} = \mu_{h},\ \ \text{say}.$$
%Suppose the random variable $h$ has probability distribution $\mu_{g_{1,\mu_{y^{3}m_{1/y}}}}$. Also assume $h$ and $b_{0.5}$ are independent and commutative. 
%Therefore, for all $1 \leq l_1 \neq l_2 \leq t$, the LSD of $(X_{l_1}Y_{l_1}^{*}Y_{l_2}X_{l_2}^{*}+X_{l_2}Y_{l_2}^{*}Y_{l_1}X_{l_1}^{*})$ and the probability distribution of $g_{\frac{2}{y},\  \mu_{h}}(2b_{0.5}-1)$ are identical, where $g_{\frac{2}{y},\  \mu_{h}}$ and $b_{0.5}$ are independent and commutative. 
%\end{cor}
\end{Example}
%\newpage

%\begin{theorem}\label{eq:crossjoint}
%Suppose $(X_l, Y_l)$, $1\leq l \leq t$ are independent random matrices whose entries satisfy Assumption Ie. Suppose $p \to \infty$ such that $p/n \to y, 0 < y < \infty$. Then as elements of $(\mathcal{M}_p(\mathbb{C}), \E \tr)$, the variables $\{C_l,\ 1\leq l\leq t\}$ jointly converge in $*$-distribution to  $\{c_1,\cdots,c_t\}$ in some NCP $(\mathcal{A}, \varphi)$ whose joint moments are given by: 
% $$\varphi(c_{\alpha_1}^{\eta_1}\cdots c_{\alpha_k}^{\eta_k})=\sum_{m=0}^{k-1}y^m
%\sum_{\stackrel{\pi=\{(r,s)\}\in NC_2(2k):}{S(\gamma\pi)=m+1,\alpha_r=\alpha_s \ \text{for all}\  (r,s)\in\pi}}\rho^{T(\pi)}
%$$ for all $\eta_1,...,\eta_k\in\{1,*\}$, 
%for all $\alpha_i\leq t, 1\leq i \leq k$, $k\geq1$.
%\hfill %$\Diamondblack$
%\end{theorem}
%ADD FIGURES.
\begin{proof}[Proof of Theorem \ref{thm: 1}] (a) For simplicity, we will prove the result only for the special case where $n_l$, $y_l$ and $\rho_l$ do not depend on $l$. It will be clear from the arguments that the same proof works for the general case. 
Consider a typical monomial 
$$(X_{\alpha_{1}}Y_{\alpha_1}^{*})^{\eta_1}  \cdots (X_{\alpha_k}Y_{\alpha_k}^{*})^{\eta_k}.$$ 
This product has $2k$ factors. We shall write this monomial in a specific way to facilitate computation. 
Note that 
\begin{equation}
(X_{\alpha_s}Y_{\alpha_s}^{*})^{\eta_s}=
\begin{cases}{X_{\alpha_s}Y_{\alpha_s}^{*}}&\ \ \text {if}\ \  \eta_{s}=1,\\
Y_{\alpha_s}X_{\alpha_s}^{*}&\ \ \text{if}\  \ \eta_{s}=*.\\
\end{cases} \nonumber
\end{equation}
For every index $l$, two types of matrices, namely $X$ and $Y$ are involved. To keep track of this, 
define 
\begin{equation}
(\epsilon_{2s-1}, \epsilon_{2s})=
\begin{cases}{(1, 2)}& \text { if }\  \eta_{s}=1,\\
 (2, 1)& \text { if }\  \eta_{s}=*.\\
\end{cases} \nonumber
\end{equation}
Let $\lceil \cdot \rceil$ be the ceiling function. % and $r\%n$ be the remainder after dividing $r$ by $n$. 
Note that
\begin{eqnarray}
\epsilon_{r} \neq \epsilon_{s} \Leftrightarrow \begin{cases} (\epsilon_r, \epsilon_{r+1}) = (\epsilon_{s-1},\epsilon_s) \Leftrightarrow  \eta_{\lceil r/2 \rceil} = \eta_{\lceil s/2 \rceil}\ \ \text{if $r$ is odd, $s$ is even}, \\
(\epsilon_{r-1},\epsilon_r) = (\epsilon_s,\epsilon_{s+1}) \Leftrightarrow \eta_{;\lceil r/2 \rceil} = \eta_{\lceil s/2 \rceil}\ \ \text{if $r$ is even, $s$ is odd}.
\end{cases} %\Leftrightarrow  \eta_{\lceil r/2 \rceil} = \eta_{\lceil s/2 \rceil},\ \ \forall r\%2 \neq s\% 2.\ \  
 \label{eqn: epeta}
\end{eqnarray} %$r ( \mod n)$
%Let $1 \leq r < s \leq 2k$ and $\epsilon_{r} \neq \epsilon_{s}$. Then $(\epsilon_r, \epsilon_{r+1}) = (\epsilon_{s-1},\epsilon_s)$ i.e. $\eta_{(r+1)/2} = \eta_{s/2}$ if $r$ is odd and $s$ is even. Moreover, $(\epsilon_{r-1},\epsilon_r) \neq (\epsilon_s,\epsilon_{s+1})$ i.e. $\eta_{r/2} = \eta_{(s+1)/2}$ if $r$ is even and $s$ is odd. Therefore,
%\begin{eqnarray}
%\epsilon_{r} \neq \epsilon_{s} \Leftrightarrow \eta_{\lceil r/2 \rceil} = \eta_{\lceil s/2 \rceil},\ \ \forall r \neq s. 
%\end{eqnarray}

 %If $r$ is odd, $s$ is even and $\epsilon_{r} \neq \epsilon_{s}$, then  $(\epsilon_r, \epsilon_{r+1}) = (\epsilon_{s-1},\epsilon_s)$ i.e. $\eta_{(r+1)/2} = \eta_{s/2}$. Further, if $r$ is even, $s$ is odd and $\epsilon_r \neq \epsilon_s$, the $(\epsilon_{r-1},\epsilon_r) \neq (\epsilon_s,\epsilon_{s+1})$ i.e. $\eta_{r/2} = \eta_{(s+1)/2}$. 

\noindent Observe that $\delta_{\epsilon_{2s-1}\epsilon_{2s}}=0$ for all $s$.
Define \begin{equation}
A_l^{(\epsilon_{2s-1})}=
\begin{cases}{X_l} & \text { if }\  \epsilon_{2s-1}=\eta_s=1 \\
 Y_l& \text { if }\  \epsilon_{2s-1}=2 \ \text{(or} \ \eta_s=*),\\
\end{cases}
\end{equation}
%\end{equation}
%\begin{equation}
\begin{equation}A_l^{(\epsilon_{2s})}=
\begin{cases}X_{l}^{*} & \text { if }\  \epsilon_{2s}=1 \ \text{(or}\ \eta_s=*) \\
 Y_l^{*}& \text { if }\  \epsilon_{2s}=2 \ \text{(or}\ \eta_s=1).\\
\end{cases} \nonumber
\end{equation}
Extend the vector $(\alpha_1, \ldots ,  \alpha_k)$ of length $k$ to the vector of length $2k$ as $$(\beta_1, \ldots , \beta_{2k}):= (\alpha_1,\alpha_1, \ldots \alpha_k, \alpha_k).$$
We need to show that for all choices of $\alpha_s\in\{1,2,...,t\}$ and $\eta_s\in\{1, *\}$,
$$L_p:=\dfrac{1}{pn^k}
{\mathbb{E}} {\rm{Tr}} (A_{\beta_1}^{(\epsilon_1)}A_{\beta_{2}}^{(\epsilon_2)}\cdots A_{\beta_{2k}}^{(\epsilon_{2k})})$$ converges to the appropriate limit.
Upon expansion, $$L_p=\dfrac{1}{pn^k}\sum_{I_{2k}}\mathbb{E} \prod_{\substack{1 \leq s \leq 2k \\ i_{2k+1}=i_1}} A_{\beta_s}^{(\epsilon_s)}(i_s,i_{s+1})$$
where $A_{\beta}^{(\epsilon)}(i, j)$ denotes the $(i, j)$th element of $A_{\beta}^{(\epsilon)}$ for all choices of $\beta, \epsilon, i$ and $j$, and  
$$I_{2k}=
\{(i_1,i_2, \ldots, i_{2k}): 
\  1\leq i_{2s-1}\leq p,\ 1\leq i_{2s}\leq n,  \  1\leq s\leq 2k\}.$$
%Further define $i_{2k+1}=i_1$.
Observe that the values of these $i_j$ have different ranges $p$ and $n$, depending on whether $j$ is odd or even.

%These two types of indices will be considered different. As before we 
Note that the expectation of any summand is zero if there is at least one $(i_s, i_{s+1})$ whose value is not repeated elsewhere in the product. 
So, as usual, to split up the sum into indices that match, consider any connected bipartite graph between the \textit{distinct} odd and even indices, $I=\{i_{2s-1}:1\leq s\leq k\}$ and $J=\{i_{2s}:1\leq s\leq k\}$. Then we need to consider only those cases where each edge appears at least twice. Hence there can be at most $k$ distinct edges and since the graph is connected, 
$$\#I+\#J\leq \#E+1\leq k+1.$$ By Assumption I, there is a common bound for all expectations involved. Hence, 
the total expectation of the terms involved in this graph is of the order 
$$O(\dfrac{p^{\#I}n^{\#J}}{pn^k})=O(p^{\#I+\#J-(k+1)})$$ since $\dfrac{p}{n}\to y>0$. As a consequence, only those terms can potentially contribute to the limit for which $\#I+ \#J=k+1$. This implies that $\#E=k$. So each edge is repeated exactly twice.
Let 
\begin{equation}
P_{2}(2k)=\{\pi: \pi \ \text{pair\ partition\ of}\ \{1, 2, \ldots , 2k\}\}.
\end{equation}
 Then each edges in $E$ corresponds to  some $\pi=\{(r,s): r < s\} \in P_2(2k)$. Let 
\begin{equation}
a_{r,s}=
\begin{cases}1 &\text { if } \ r, s \text{ are both odd or both even},  \\
 0   &\text { otherwise}.\\
\end{cases}
\end{equation}
Then  we have 
\begin{eqnarray*}\lim_{p\to \infty} L_p &=& \lim_{n\to\infty}\dfrac{1}{pn^k}\sum_{I_{2k}}{\mathbb{E}}\big[\prod_{s=1}^{2k} A_{\beta_{s}}^{(\epsilon_s)}(i_s, i_{s+1})\big] 
= \sum_{\pi\in P_2(2k)}\lim_{n\to\infty}\dfrac{1}{pn^k}\sum_{I_{2k}}\prod_{(r,s)\in\pi} E(r, s)\ \ \text{say},
\end{eqnarray*}
where, suppressing the dependence on other variables,  
\begin{eqnarray*}E(r, s)&=& {\mathbb{E}}\big[A_{\beta_r}^{(\epsilon_r)} (i_r,i_{r+1})A_{\beta_{s}}^{(\epsilon_s)}(i_s, i_{s+1})\big]\\
 &=&\delta_{\beta_r\beta_s}
 (\rho(1-\delta_{\epsilon_r\epsilon_s})+\delta_{\epsilon_r\epsilon_s})(\delta_{i_ri_s}\delta_{i_{r+1}i_{s+1}}a (r,s) +\delta_{i_ri_{s+1}}\delta_{i_si_{r+1}}(1-a (r,s))) \nonumber \\
  &=&\delta_{\beta_r\beta_s}
( \rho^{1-\delta_{\epsilon_r\epsilon_s}}(1-\delta_{\rho 0})+\delta_{\rho 0}\delta_{\epsilon_r\epsilon_s})(\delta_{i_ri_s}\delta_{i_{r+1}i_{s+1}}a (r,s) +\delta_{i_ri_{s+1}}\delta_{i_si_{r+1}}(1-a (r,s))). 
\end{eqnarray*}
Recall that $|\rho|\leq 1$. Hence  each $E(r,s)$ is a sum of two factors{\textemdash}one of them is bounded by $\delta_{i_ri_s}\delta_{i_{r+1}i_{s+1}}$ and the other is bounded by $\delta_{i_ri_{s+1}}\delta_{i_si_{r+1}}$. Hence when we expand $\prod_{(r,s)\in\pi} E(r, s)$, each term  involves a product of these $\delta$-values.
%Now recall our already familiar arguments with (C1) and (C2) constraints. Terms where there is at least one  (C1) constraint (that is $\delta_{i_{r}i_{s}}\delta_{i_{r+1}i_{s+1}}=1$ for at least one $(r,s)$), do not survive in the limit. 
Using arguments similar  to those used in the proof of Theorem 3.2.6 in \cite{bose2018}, it is easy to see 
%See for instance the proofs of 
%Theorems \ref{thm:i.i.d.samplecov} in Chapter \ref{chapter:lsd} and ????. Thus 
that the only term that will survive in 
$\prod_{(r,s)\in\pi} E(r, s)$
is 
$$\prod_{(r,s)\in \pi} \delta_{\beta_r\beta_s} \big( \rho^{1-\delta_{\epsilon_r\epsilon_s}}(1-\delta_{\rho 0})+\delta_{\rho 0}\delta_{\epsilon_r\epsilon_s}\big)\delta_{i_ri_{s+1}}\delta_{i_si_{r+1}}(1-a (r,s)).$$

\noindent Hence $\lim L_p$ is equal to 
 %(\ref{eq:intermediate}) reduces to 
\begin{equation}\label{eq:internc}\sum_{\pi\in {\rm{NC}}_2(2k)}\hspace{-5pt}\lim_{n\to\infty}\dfrac{1}{pn^k}\sum_{I_{2k}}\hspace{-5pt}\prod_{(r,s)\in\pi}\hspace{-5pt}\delta_{\beta_r\beta_s}\big( \rho^{1-\delta_{\epsilon_r\epsilon_s}}(1-\delta_{\rho 0})+\delta_{\rho 0}\delta_{\epsilon_r\epsilon_s}\big)(1-a(r,s))\prod_{r=1}^{2k}\delta_{i_ri_{\gamma\pi(r)}}.
\end{equation}
But for any $\pi\in {\rm{NC}}_2(2k)$ if $(r,s)\in\pi$ then $r$ and $s$ have different parity and hence $a(r,s)=0$. 
Let $\gamma$ denote the cyclic permutation $1\to 2\to \cdots \to 2k\to 1$. Then (\ref{eq:internc}) simplifies to 
\begin{equation}\label{eq:simplifiedxy}\sum_{\pi\in {\rm{NC}}_2(2k)}\prod_{(r,s)\in\pi}\delta_{\beta_r\beta_s}( \rho^{1-\delta_{\epsilon_r\epsilon_s}}(1-\delta_{\rho 0})+\delta_{\rho 0}\delta_{\epsilon_r\epsilon_s})\lim_{n\to\infty}
\dfrac{\#\{I_{2k}:i_r=i_{\gamma\pi(r)}\ \text{for all}\  r\}}{pn^k}.
\end{equation}
Now note that as $\pi\in {\rm{NC}}_2(2k)$, $\gamma\pi$ contains $k+1$ blocks. Moreover, each block of $\gamma\pi$ contains only odd or only even elements.  Let 
\begin{equation*}S(\gamma\pi)=\ \text{Number of blocks in}\ \ \gamma\pi\ \ \text{with only odd elements}. 
\end{equation*} 
Then the number of blocks of $\gamma\pi$ with only even elements is 
$k+1-S(\gamma\pi)$. 
Suppose $\pi\in {\rm{NC}}_2(2k)$ such that $S(\gamma\pi)=m+1$.  Then it is clear that 
\begin{equation*}\#\{I_{2k}:i_r=i_{\gamma\pi(r)}\ \text{for all}\  r\}=p^{m+1}n^{k+1-(m+1)}
\end{equation*} and hence using (\ref{eq:simplifiedxy})
\begin{eqnarray*}
\lim L_p &=&\sum_{\pi\in {\rm{NC}}_2(2k)}\prod_{(r,s)\in\pi}\delta_{\beta_r\beta_s}( \rho^{1-\delta_{\epsilon_r\epsilon_s}}(1-\delta_{\rho 0})+\delta_{\rho 0}\delta_{\epsilon_r\epsilon_s})\lim_{n\to\infty}
\dfrac{\#\{I_{2k}:i_r=i_{\gamma\pi(r)}\forall r\}}{pn^k}\\
&=& \sum_{m=0}^{k-1}y^m
\sum_{\stackrel{\pi\in {\rm{NC}}_2(2k):}{S(\gamma\pi)=m+1}}
\prod_{(r,s)\in \pi}\delta_{\beta_r\beta_s} ( \rho^{1-\delta_{\epsilon_r\epsilon_s}}(1-\delta_{\rho 0})+\delta_{\rho 0}\delta_{\epsilon_r\epsilon_s}) y^{m} \nonumber \\
&=& \sum_{m=0}^{k-1}y^m
\sum_{\stackrel{\pi\in {\rm{NC}}_2(2k):}
{S(\gamma\pi)=m+1}}\prod_{(r,s)\in \pi} \delta_{\beta_r\beta_s}(\rho^{T(\pi)}(1-\delta_{\rho 0}) + \delta_{\rho 0}\delta_{T(\pi) 0}) 
\end{eqnarray*}
where 
\begin{equation*}T(\pi)=\#\{(r,s)\in\pi:\delta_{\epsilon_r\epsilon_s}=0\}\ \ \text{for}\ \ \pi\in {\rm{NC}}_2(2k).
\end{equation*}
Hence we have proved that $\{C_l:\ 1\leq l \leq t\}$ converge jointly in $*$-distribution to say  $\{c_{{l,y,\rho}}:\ 1 \leq l \leq t\}$ which are the limit NCP $(\mathcal{A},\varphi)$. We still have to identify the limit and prove the freeness. For this we need to go from ${\rm{NC}}_2(2k)$ to ${\rm{NC}}(k)$. 
Define 
\begin{eqnarray}
\tilde{J}_i &=& \{j \in \{1,2,\ldots, 2k\}:\ \beta_j=i\},\ \ 1 \leq i \leq t, \nonumber \\
\tilde{B}_k &=& \{\pi \in {\rm{NC}}_2(2k):\ \pi = \cup_{i=1}^{t}\pi_i,\ \pi_i \in {\rm{NC}}_2(\tilde{J}_i ),\ 1 \leq i \leq t\}, \nonumber \\
\tilde{B}_{m,k} &=& \{\pi \in \tilde{B}_k:\ S(\gamma\pi) = m+1\}. \nonumber 
\end{eqnarray}
Note that $\cup_{m=0}^{k-1} \tilde{B}_{m,k} = \tilde{B}_{k}$ and hence
\begin{eqnarray}
\lim L_p = \sum_{m=0}^{k-1}y^{m} \sum_{\pi \in \tilde{B}_{m,k}}(\rho^{T(\pi)}(1-\delta_{\rho 0}) + \delta_{\rho 0}\delta_{T(\pi) 0}) . \label{eqn: 1}
\end{eqnarray}
Also define
\begin{eqnarray}
J_i &=& \{j \in \{1,2,\ldots,k\}:\ \alpha_j = i\},\ \ 1 \leq i \leq t, \nonumber \\
B_k &=& \{\pi \in {\rm{NC}} (k):\ \pi = \cup_{i=1}^{t}\pi_i,\ \pi_i \in {\rm{NC}}({J}_i ),\ 1 \leq i \leq t\}, \nonumber \\
{B}_{m,k} &=& \{\pi \in B_k:\  \pi\ \text{has}\ m\ \text{blocks}\}. \nonumber 
\end{eqnarray}
Note that $\cup_{m=0}^{k-1} {B}_{m+1,k} = {B}_{k}$.  %\vskip 5pt
\noindent For any finite subset $S=\{j_1,j_2,\ldots, j_r\}$ of positive integers, define 
\begin{eqnarray}
\tilde{T}(S) = \sum_{\substack{1\leq u \leq r \\ j_{r+1}=j_1}} \delta_{\eta_{j_u}\eta_{j_{u+1}}} \nonumber 
\end{eqnarray}
and for $\pi = \{V_1,V_2,\ldots,V_m\}  \in {\rm{NC}}(k)$, define
\begin{eqnarray}
\mathcal{T}(\pi) = \sum_{i=1}^{m}\tilde{T}(V_i). \nonumber
\end{eqnarray}
Consider the bijection $f: {\rm{NC}}_2(2k) \to {\rm{NC}}(k)$ as follows. Take $\pi \in {\rm{NC}}_2(2k)$. Suppose $(r,s)$ is a block of  $\pi$. Then $\lceil r/2 \rceil$ and $\lceil s/2 \rceil$ are put in the same block in $f(\pi) \in  {\rm{NC}}(k)$.  
Using arguments similar to used in the proof of Lemma 3.2 in \cite{BB2021}, it is easy to see that $f$ is indeed a bijection and is also a bijection between $\tilde{B}_{m,k}$ and $B_{k-m,k}$.  Moreover, using (\ref{eqn: epeta}), it is immediate that $T(\pi) = \mathcal{T}(f(\pi))\ \forall\ \pi \in \tilde{B}_{m,k}$ i.e. $T(f^{-1}(\pi)) = \mathcal{T}(\pi)\ \forall\ \pi \in {B}_{k-m,k}$. 

As an example,  let $\pi = \{(1,8),(2,5),(6,7),(3,4),(9,10)\}$. Then $\pi  \in \tilde{B}_{2,5}$ and is mapped to 
$f(\pi) = \{(1,3,4),(3),(5)\} \in {B}_{3,5}$. Let $(\eta_1,\eta_2,\eta_3,\eta_4,\eta_5) = (1,*,1,*,1)$. 
Further, $(\epsilon_1,\epsilon_2,\ldots, \epsilon_{10}) = (1,2,2,1,1,2,2,1,1,2)$ and $T(\pi) = \mathcal{T}(f(\pi)) = 3$. %$$. 
 
Now (\ref{eqn: 1}), we have
\begin{eqnarray}
\lim L_p &=& \sum_{m=0}^{k-1}y^{m} \sum_{\pi \in {B}_{k-m,k}}\big(\rho^{T(f^{-1}(\pi))}(1-\delta_{\rho 0}) + \delta_{\rho 0}\delta_{T(f^{-1}(\pi)) 0}\big) \nonumber \\
& = & \sum_{m=0}^{k-1}y^{m} \sum_{\pi \in {B}_{k-m,k}}\big(\rho^{\mathcal{T}(\pi)}(1-\delta_{\rho 0}) + \delta_{\rho 0}\delta_{\mathcal{T}(\pi) 0}\big)   \nonumber \\
&=& \sum_{m=0}^{k-1}y^{k-m-1} \sum_{\pi \in {B}_{m+1,k}}\big(\rho^{\mathcal{T}(\pi)}(1-\delta_{\rho 0}) + \delta_{\rho 0}\delta_{\mathcal{T}(\pi) 0}\big).\ \ \hspace{0.5 cm}  \label{eqn: 2} 
\end{eqnarray}
 Then, (\ref{eqn: 2}) implies
\begin{eqnarray}
\varphi(c_{\alpha_1}^{\eta_1}c_{\alpha_2}^{\eta_2}\cdots c_{\alpha_k}^{\eta_k}) &=& \sum_{m=0}^{k-1}\sum_{\substack{\pi \in {B}_{m+1,k} \\ \pi = \{V_1,V_2,\ldots,V_{m+1}\}}}\prod_{l=1}^{m+1} y^{\#V_l-1} \big(\rho^{\tilde{T}(V_l)}(1-\delta_{\rho 0}) + \delta_{\rho 0} \delta_{\tilde{T}(V_l) 0}\big) \nonumber \\
&=& \sum_{\pi \in B_k} \prod_{l=1}^{\# \pi} y^{\#V_l-1} (\rho^{\tilde{T}(V_l)}(1-\delta_{\rho 0}) + \delta_{\rho 0}\delta_{\tilde{T}(V_l) 0} ). \nonumber
\end{eqnarray}
Hence, by moment-free cumulant relation, we have
\begin{eqnarray}
\kappa_{\pi}[c_{\alpha_1}^{\eta_1},c_{\alpha_2}^{\eta_2},\ldots, c_{\alpha_k}^{\eta_k}] &=& 0\ \text{for all}\ \pi \in {\rm{NC}}(k) - B_k,\ \  \nonumber \\
\kappa_{\pi} [c_{\alpha_1}^{\eta_1},c_{\alpha_2}^{\eta_2},\ldots, c_{\alpha_k}^{\eta_k}] &=& \prod_{l=1}^{\# \pi} y^{\#V_l-1} (\rho^{\tilde{T}(V_l)}(1-\delta_{\rho 0}) + \delta_{\rho 0}\delta_{\tilde{T}(V_l) 0} ) \ \text{for all}\ \pi \in B_k. \nonumber
\end{eqnarray}
This implies that 
\begin{eqnarray}
\kappa_k(c_{\alpha_1}^{\eta_1},c_{\alpha_2}^{\eta_2},\ldots, c_{\alpha_k}^{\eta_k}) = \begin{cases} y^{k-1}
\big(\rho^{\mathcal{T}(\one_k)}(1-\delta_{\rho 0}) + \delta_{\rho 0}\delta_{\mathcal{T}(\one_k) 0}\big),\ \ \text{if $\alpha_1=\alpha_2= \cdots =\alpha_k$}, \\ 
0\ \ \text{otherwise}. \end{cases} \nonumber 
\end{eqnarray}
Therefore $\{c_{l}:\ 1 \leq l \leq t\}$ are free across $l$, and the marginal free cumulant of order $k$ is 
\begin{eqnarray}
\kappa_k(c_{l}^{\eta_1},c_{l}^{\eta_2},\ldots,c_{l}^{\eta_k}) &=& y^{k-1}(\rho^{S(\boldsymbol{\eta}_k)}(1-\delta_{\rho 0}) + \delta_{\rho 0}\delta_{S(\boldsymbol{\eta}_k) 0})\ \nonumber \\
\text{where}\ S(\boldsymbol{\eta}_k) &=& \mathcal{T}(\one_k)= {\displaystyle{\sum_{\substack{1 \leq u \leq k \\ u_{k+1}=u_1}} \delta_{\eta_{u}\eta_{u+1}}}}. \nonumber
\end{eqnarray} 
%???why do you need the Upsilon notation? 
This completes the proof of Theorem \ref{thm: 1} (a)  for the state $\varphi_p$ for the special case when the values of $\rho_l$ and of $y_l$ are same. It is easy to see that the above proof continues to hold for the general case, except for notational complexity. We omit the details.  
 
Now we argue convergence with respect to the state $\tilde{\varphi}_p$. Consider any polynomial $\Pi$. We have already shown convergence of 
$p^{-1}\mathbb{E} [{\rm Trace}(\Pi^k)]$ to say $\beta_k$ for all $k$. %But these are nothing but the moments of the expected empirical spectral distribution (EESD). 
We need to show that $p^{-1}{\rm Trace}(\Pi^k)$ converges to $\beta_k$ almost surely. For this it is enough to show that for every $k$,  
$$\mathbb{E}\Big[p^{-1}{\rm Trace}(\Pi^k)-p^{-1}E [{\rm Trace}(\Pi^k)]\Big]^4=O(p^{-2}).$$
Then an application of the Borel-Cantelli Lemma would finish the proof. Now, a further simplification is that it is enough to prove this for any monomial. Then the above estimate is obtained by a counting argument, which is similar to, but simpler than what has already been used in the proof so far. We omit the details. See Section 2 in the Supplementary material of \cite{BB2016ynot0} for similar arguments.  This complete the proof of (a). 

\noindent (b) Now suppose that $\Pi$ is symmetric. Then by the above argument, all moments of $\Pi$ converge and there is a $C > 0$, depending on $\Pi$ such that the limiting $k$th moment is bounded by $C^{k}$ for all $k$. This implies that these moments define a unique probability law say $\mu$ with support contained in $[-C, \ C]$, and hence the EESD of $\Pi$ converges weakly to $\mu$. Again, the almost sure convergence of the ESD can be established by the arguments discussed above. We omit the details. Note that this argument works only if $\Pi$ is a symmetric matrix. 
\end{proof}
%\textbf{Need to add spacial cases.} Note that if we take  $\rho=1$, then we we get back the result on joint convergence of independet $S$ matrices.

\subsection{Convergence of $\{C_l: 1 \leq l \leq t\}$ when $y =0$}
Now suppose $pn^{-1}\to 0$ as $n,p\to\infty$. Then Theorem \ref{thm: 1} leads to a degenerate distribution when $y=0$. In this case, we need a centering as well as a different scaling for a non-degenerate limit to exist.  Let us quickly recall a known result.  
% In this case, the following result is well known. 
Consider the sample covariance matrix $S=n^{-1}XX^*$ where the  entries $X_{ij}$ of $X$ are independent and ${\mathbb{E}}(X_{11})=0,\ {\rm{Var}}(X_{11})=1, {\mathbb{E}}(X_{11}^4)<\infty$. Then the empirical spectral distribution (ESD) of $\sqrt{np^{-1}}(S-I_p)$ converges weakly almost surely to the standard semi-circle law where $I_p$ is the identity matrix of order $p$. For proof, one can see \cite{bose2018}. This proof actually shows that when all moments are finite, then the moments of the ESD converge to the moments of the semi-circle law and hence there is convergence as elements of $(\mathcal{M}_p(\mathbb{C}), \varphi_p)$ to a semi-circular variable. 

We provide a generalization of this result involving cross-covariance matrices. 
Before we state the result, we need to recall the definition of elliptic variables. 
\begin{definition}\label{def:elliptic}
Suppose $(\mathcal{A}, \varphi)$ is an NCP. An element $e\in \mathcal A$ is said to be an {\it elliptic} variable\index{elliptic variable}\index{variable, elliptic} with parameter $\rho$, $-1\leq \rho\leq 1$ if its free cumulants of order one and of order greater than $2$ are zero and its second order free cumulants are given by 
$$\kappa_2(e, e)=\kappa_2(e^*, e^*)=\rho, \ \ \kappa_2(e, e^*)=\kappa_2(e^*, e)=1.$$
\end{definition}
An elliptic variable has the following representation. Suppose $s_1$ and $s_2$ are two free standard semi-circular variables. Define 
$$e=\sqrt{\dfrac{1+\rho}{2}}s_1+ \sqrt{-1}\sqrt{\dfrac{1-\rho}{2}}s_2.$$
Then $e$ is an elliptic variable with parameter $\rho$. Note that $\rho=1$ and $\rho=0$ yield respectively the standard semi-circular and the standard circular variable. 

 We use the following crucial fact for variables to be elliptic \textit{and} free: 
Variables $\{e_i, 1\leq i \leq t\}$ are elliptic with parameters 
 $\{\rho_i, 1\leq i \leq t\}$ on an NCP $(\mathcal{A}, \varphi)$ and are free if and only if,  for all $k \geq 1$, for all $1\leq i\leq k$, $\epsilon_i \in \{1, *\}$ and  
$\tau_i \in \{1, \ldots , t\}$,  the following holds for the joint moments:
\begin{align}\label{eqn:twomatrixfree2}
\varphi(e_{\tau_1}^{\epsilon_1}e_{\tau_2}^{\epsilon_2}\cdots e_{\tau_{2k}}^{\epsilon_{2k}})=\sum_{\pi \in {\rm{NC}}_2(2k)}\rho_1^{T_1(\pi)}\ldots \rho_m^{T_m(\pi)}\prod_{(r,s)\in \pi}\delta_{\tau_r\tau_{s}}.
\end{align}
where
$$T_{\tau}(\pi):=\#\{(r,s)\in \pi \;:\; \delta_{\epsilon_r\epsilon_s}=1, \tau_r=\tau_s=\tau\}.$$
Note that, it is understood that all odd order moments are $0$. 
\vskip 5pt
\noindent Now we can state our theorem. 

%??Question. Is the result true almost surely? and for different $n_l$ and $y$ and $\rho$? Is it easy to argue?
\begin{theorem}\label{thm: 2} Suppose Assumption I holds with $y_l=0,\ \forall\ 1 \leq l \leq t$.\vskip 3pt
\noindent (a) Then $E_l=\sqrt{n_l p^{-1}}\big(C_l-\rho_l I_p\big), 1\leq l \leq t$ as elements of $(\mathcal{M}_p, \varphi_p)$ converge jointly to free elliptic variables $e_1,...,e_t$, with parameters $\rho_l^2$. The convergence also holds with respect to the state $\tilde\varphi_p$ almost surely. The limiting state is tracial. \vskip 3pt
\noindent (b) Let $\Pi:=\Pi(\{E_{l}:\ 
1 \leq l \leq t\})$ be  any  finite degree real matrix polynomial  in $\{E_{l}, E_{l}^{*}:\ 1 \leq l \leq t\}$ and which is symmetric. Then the ESD of $\Pi$ converges weakly almost surely to the compactly supported probability law of the self-adjoint variable 
$\Pi(\{e_{l}:\ 1 \leq l \leq t\})$. 
\end{theorem}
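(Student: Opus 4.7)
The plan is to compute the limiting trace moments of a generic monomial in $\{E_l,E_l^{*}: 1\leq l\leq t\}$ and identify them with the elliptic moment formula (\ref{eqn:twomatrixfree2}). The proof parallels that of Theorem \ref{thm: 1}, but the centering and the altered scaling change the combinatorics: the contributing configurations will now be indexed by \emph{non-crossing pair partitions of the $k$ positions}, rather than by non-crossing pair partitions of the $2k$ factors that arise on splitting $C=n^{-1}XY^{*}$ when $y\neq 0$.

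I would first rewrite the centered matrix as $E_l=(n_l p)^{-1/2}(X_lY_l^{*}-\rho_l n_l I_p)$, so that $(E_l)_{ij}=(n_l p)^{-1/2}\sum_{r=1}^{n_l}\zeta_{ij,r}^{(l)}$ with $\zeta_{ij,r}^{(l)}:=X_{ir}^{(l)}Y_{jr}^{(l)}-\rho_l \delta_{ij}$ centered, and expand the trace moment
\[
L_p:=\frac{1}{p\,(np)^{k/2}}\sum_{\boldsymbol{i}\in[p]^k}\sum_{\boldsymbol{j}\in[n]^k}\mathbb{E}\prod_{s=1}^{k}\zeta_{i_s,i_{s+1},j_s}^{(\alpha_s,\eta_s)}
\]
under $i_{k+1}=i_1$, where $\zeta^{(\alpha,\eta)}$ denotes the appropriate $\eta$-flavoured summand. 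Since each $\zeta$ is mean-zero and entries are independent across $r$, any $j$-value appearing only once kills the expectation, and a standard connected-graph bound caps the total number of distinct $(i,j)$-indices by $k+1$. With $p/n\to 0$, a $j$-block of size $\geq 3$ is strictly sub-leading, so only pair partitions of $\{j_1,\ldots,j_k\}$ survive and $k$ must be even; for simplicity one may fix $n_l=n,\rho_l=\rho$ and a single $\alpha$-value, the general case being purely notational.

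For each surviving pair partition $\sigma\in P_2(k)$ the expectation factorizes across pairs, and for $(s,t)\in\sigma$ with $\alpha_s=\alpha_t=l$ the two-point expectation decomposes into three Wick-type contributions: the $(X,X)/(Y,Y)$ pairing with coefficient $1$, the $(X,Y)/(Y,X)$ pairing with coefficient $\rho_l^2$, and a ``both-diagonal'' contribution that the centering of $\zeta$ cancels exactly. A short case analysis in $\eta_s,\eta_t$ shows that only one of the two remaining patterns is compatible with the index structure of the trace at leading order: the $\rho_l^2$-pattern when $\eta_s=\eta_t$, the $1$-pattern when $\eta_s\neq\eta_t$. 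So each pair contributes $\delta_{\alpha_s\alpha_t}\rho_{\alpha_s}^{2\delta_{\eta_s\eta_t}}$, and a planarity argument identical to the semi-circle case shows that only non-crossing pair partitions saturate the connected-graph bound (crossing ones force an extra index collapse). Therefore
\[
\lim_{p\to\infty}L_p=\sum_{\pi\in{\rm{NC}}_2(k)}\prod_{(s,t)\in\pi}\delta_{\alpha_s\alpha_t}\,\rho_{\alpha_s}^{2\delta_{\eta_s\eta_t}},
\]
which is precisely the joint moment formula (\ref{eqn:twomatrixfree2}) for free elliptic variables $\{e_l\}$ with parameters $\{\rho_l^2\}$.

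This proves part (a) for the state $\varphi_p$; almost-sure convergence under $\tilde\varphi_p$ follows from the standard fourth-moment/Borel-Cantelli estimate $\mathbb{E}|p^{-1}{\rm Tr}(\Pi^k)-p^{-1}\mathbb{E}{\rm Tr}(\Pi^k)|^4=O(p^{-2})$ already used in the proof of Theorem \ref{thm: 1}. For part (b), symmetry of $\Pi$ makes $\Pi(\{e_l\})$ self-adjoint, Assumption I forces the limiting moments to grow at most geometrically in $k$, Carleman's condition gives a unique compactly supported law, and the Borel-Cantelli step upgrades moment convergence to almost-sure weak convergence of the ESD. The main technical obstacle is the cancellation lemma of the third paragraph: one must verify rigorously that the centering of $E_l$ removes precisely the ``both-diagonal'' Wick term so each surviving pair produces the elliptic coefficient $\rho_l^{2\delta_{\eta_s\eta_t}}$ and nothing more, and that only non-crossing pair partitions saturate the connected-graph bound. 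Once these two combinatorial facts are in hand, the remaining steps reduce to the same kind of power-counting used in the proof of Theorem \ref{thm: 1}.
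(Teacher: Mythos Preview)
Your proposal is correct and follows the same overall architecture as the paper's proof: expand the trace, use the centering of $\zeta$ to kill singleton $j$-indices, apply the connected-graph bound to force $\#V_J=k/2$ (hence $k$ even) and $\#V_I=k/2+1$, reduce to pair partitions of $\{1,\ldots,k\}$, compute the two-point expectations, and identify the result with the elliptic moment formula~(\ref{eqn:twomatrixfree2}). The treatment of $\tilde\varphi_p$ via a fourth-moment Borel--Cantelli estimate and of part~(b) via Carleman also matches the paper.

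The one genuine difference is how you justify that only non-crossing pair partitions---and, within each pair $(r,s)$, only the ``cross-matched'' index pattern $i_r=i_{s+1},\ i_{r+1}=i_s$---survive in the limit. You propose a direct planarity argument (``crossing ones force an extra index collapse''), together with a case analysis to discard the parallel pattern $i_r=i_s,\ i_{r+1}=i_{s+1}$. The paper instead takes a bootstrap shortcut: it specializes to $t=1$, $\rho=1$, where $E$ reduces to $\sqrt{n/p}(S-I_p)$ and the limit is already known to be semi-circular. In that case the full sum over $P_2(2m)$ and over both index patterns equals $C_m$; but the non-crossing, cross-matched terms alone already sum to $C_m$ (each contributes exactly $1$), and every remaining term is non-negative, so all of them must vanish. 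This negligibility then transfers to general $\rho$ and $t$ because each factor in the expansion is bounded in absolute value by the corresponding $\rho=1$ factor. Your direct route is more self-contained and is the standard Wigner-style argument; the paper's comparison trick is shorter because it recycles the known covariance result rather than redoing the planarity combinatorics.
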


\begin{Example}
Theorem \ref{thm: 2} is useful to find the LSD of any appropriately centered and scaled symmetric polynomial of $\{C_l:\ 1 \leq l \leq t\}$. For example, consider 
\begin{eqnarray}
\Pi &=& \sqrt{\frac{\min(n_1,n_2)}{p}}(C_1+C_1^{*} + C_1C_2^{*} + C_2C_1^{*} - 2 \rho_1(1+\rho_2)I_p) \nonumber \\
&=& \sqrt{\frac{\min(n_1,n_2)}{p}}\bigg[(C_1+C_1^{*} - 2\rho_1 I_p) + (C_1-\rho_1I_p)(C_2-\rho_2I_P)^{*} + (C_2-\rho_2 I_p)(C_1 - \rho_1 I_p)^{*}  \nonumber \\
&&\hspace{5.75 cm}+ \rho_2(C_1+C_1^{*}-2\rho_1I_p) + \rho_1(C_2+C_2^{*}-2\rho_2I_p)\bigg] \nonumber \\
&=& \sqrt{\min(n_1,n_2)}\bigg[n_1^{-1/2} (E_1+E_1^{*}) + \sqrt{p}n_1^{-1/2}n_2^{-1/2}(E_1E_2^{*} + E_2E_1^{*}) \nonumber \\
&& \hspace{5.25 cm} + n_1^{-1/2}\rho_2 (E_1+E_1^{*}) + n_2^{-1/2}\rho_1(E_2+E_2^{*})\bigg].
% \\ 
%& \to & \bigg(\lim \sqrt{\frac{\min(n_1,n_2)}{n_1}}\bigg)(1+\rho_2)(e_1 +e_1^{*}) +  \bigg(\lim \sqrt{\frac{\min(n_1,n_2)}{n_2}}\bigg)(e_2 +e_2^{*}).
%\label{eqn: sym0}  
\end{eqnarray}
Hence it follows that  
\begin{eqnarray}
\Pi \stackrel{\ast}{\to}  
\begin{cases}
(1+\rho_2)(e_1 +e_1^{*}) +  y_{12}^{1/2}(e_2 +e_2^{*}) \ \ \text{if}\ \ \lim_{p\to\infty}n_1/n_2=y_{12}\leq 1,\\ 
\label{eqn: sym0}\\
y_{12}^{-1/2}(1+\rho_2)(e_1 +e_1^{*}) + (e_2 +e_2^{*}) \ \ \text{if}\ \ \lim_{p\to\infty}n_1/n_2=y_{12}\geq 1.  
%\ \ 
%%\bigg(\lim \sqrt{\frac{\min(n_1,n_2)}{n_1}}\bigg)(1+\rho_2)(e_1 +e_1^{*}) +  \bigg(\lim \sqrt{\frac{\min(n_1,n_2)}{n_2}}\bigg)(e_2 +e_2^{*}) 
%\int_{\mathbb{R}^{n}} e^{it^{\prime}x} f(x) d\lambda(x)\  \ \text{if}\  \ \mu\  \ \text{has density} \  \ f (\cdot)\\
\end{cases}
\end{eqnarray}
Note that depending on whether $y_{12}=0$ or $\infty$, the second or the first term respectively drop out from the limit sums. 
 We can conclude that the LSD of $\Pi$ also exists almost surely and equals the probability law of the self-adjoint variable in (\ref{eqn: sym0})\end{Example}

%\begin{remark}
%Consider any symmetric polynomial 
%$$\Pi = \sum_{k=1}^{K} \gamma_k \sqrt{p^{-1}n_{\omega_k}}\bigg[\left(\prod_{j=1}^{J_k} C_{\alpha_{kj}}^{\eta_{kj}}\right)-\left(\prod_{j=1}^{J_k} \rho_{\alpha_{kj}}\right)\text{I}_p\bigg]$$ of $\{C_l,C_l^{*}:\ 1 \leq l \leq t\}$ for any  $\alpha_{kj} \in \{1,2,\ldots, t\}$, $\eta_{kj} \in \{1,*\}$, $\omega_k = {\rm{argmax}}\{n_{\alpha_{kj}}: 1 \leq j \leq J_k\}$, real $\gamma_k$ for all  $1 \leq j \leq J_k,\ 1 \leq k \leq K,\ K \geq 1 $.  Let  $\theta_{jk} = \prod_{\substack{j^\prime =1 \\ j^\prime \neq j}}^{J_k} \rho_{\alpha_{kj^\prime}}$, $\sigma_{uv}^2 = \lim n_{v}^{-1}n_u \in [0,\infty]$. Then by Theorem \ref{thm: 2}(b), the ESD of $\Pi$ converges weakly almost surely to the distribution of $\sum_{k=1}^{K} \gamma_k\left(\sum_{j=1}^{J_k}\theta_{kj} \sigma_{\omega_k\alpha_{kj}} e_{\alpha_{kj}}^{\eta_{kj}}\right)$.  This follows from arguments similar to those given in Example 2 of Chapter 7 in \cite{bose2018large}. ??what if some sigma's are infinity? Motivation for this example is not clear. What do we wish to say?  why not give a simple specific $\Pi$ like we did in our examples in the book to show which terms are negligible... and then indicate the general case..
%\end{remark}

\begin{proof}[Proof of Theorem \ref{thm: 2}] (a) We shall give the detailed proof only for the special case where all the $n_l$'s $\rho_l$'s are equal to say $n$ and $\rho$ respectively. For any $k \geq 1$ and $\epsilon_1,...,\epsilon_k\in\{1, *\}$, we will consider the limit of the following as $p,n\to\infty$ with $p/n\to 0$:
\begin{equation}\label{eq:tracecross}\dfrac{1}{p}\mathbb E\text{Tr}(E_{l_{1}}^{\epsilon_1}\cdots E_{l_{k}}^{\epsilon_k})=\dfrac{1}{p (np)^{k/2}}\hspace{-10pt}\sum_{\stackrel{1\leq i_1,\ldots,i_k\leq p}{1\leq j_1,\ldots, j_k\leq n}}
\hspace{-15pt}\mathbb E\big[\prod_{t=1}^k\big(a_{l_{t}i_tj_t}a_{l_{t}i_{t+1}j_t}-\rho\delta_{i_ti_{t+1}}\big)\big]
\end{equation}
 with the understanding that $i_{k+1}=i_1$, and as ordered pairs, for all $1\leq r \leq k$,  
\begin{eqnarray}  
(a_{l_{r}i_rj_r}, a_{l_{r}i_{r+1}j_r}) = \begin{cases} 
(x_{i_rj_r}^{(l_r)}, y_{i_{r+1}j_{r}}^{(l_r)}) \ \ \text{if}\ \  \epsilon_r=1,\nonumber\\
\\
(y_{i_rj_r}^{(l_r)}, x_{i_{r+1}j_r}^{(l_r)})\ \ \text{if}\ \ \epsilon_r=*.\label{eq:axy}
\end{cases}
\end{eqnarray}
Consider the following collection of all \textit{ordered} pairs of indices that appear in the above formula: 
$$P=\{(i_r, j_r), (i_{r+1}, j_r), 1\leq r\leq k\}.$$
(i) Suppose there is a  pair say,  $(i_r, j_r)\in P$ that appears only once. %such $(i_r, j_r)\neq (i_s, j_s)$ for all $s\neq r$.  
Then  $(i_r, j_r)\neq (i_{r+1}, j_r)$ and hence  $i_r\neq i_{r+1}$. 
As a consequence, the variable  
$a_{l_{r}i_rj_r}$ is independent of all other variables and we get %in the product 
\begin{eqnarray}\mathbb E\big[\prod_{t=1}^k\big(a_{l_{t}i_tj_t}a_{l_{t}i_{t+1}j_t}-\rho\delta_{i_ti_{t+1}}\big)\big]
&=&\hspace{-5pt}\mathbb E[a_{l_ri_rj_r}] \mathbb E\Big[a_{l_ri_{r+1}j_r} %\nonumber \\
%&&\hspace{-5pt}
\prod_{t\neq r}\big(a_{l_{t}i_tj_t}a_{l_{t}i_{t+1}j_t}-\rho\delta_{i_ti_{t+1}}\big)\Big]%\nonumber
%\\
%&=&
=0. \hspace{1 cm}\label{eq:factor0}
\end{eqnarray}
The same conclusion holds if a pair $(i_{r+1},j_r)$ occurs only once in $P$. So we can restrict attention to the subset of $P$ where each pair is repeated at least twice, and we continue to call this reduced subset by  $P$. \vskip3pt

\noindent (ii) Suppose in $P$ there is a $j_r$ such $j_r\neq j_s$ for all $s\neq r$, then the pair
$(a_{l_{r}i_rj_r},a_{l_{r}i_{r+1}j_r})$ is independent of all other factors in the product. Hence, 
\begin{eqnarray*}\mathbb E\big[\prod_{t=1}^k\big(a_{l_{t}i_tj_t}a_{l_{t}i_{t+1}j_t}-\rho\delta_{i_ti_{t+1}}\big)\big]&\hspace{-7pt}=&\hspace{-7pt}\underbrace{\mathbb E[a_{l_ri_rj_r}a_{l_ri_{r+1}j_r}-\rho\delta_{i_ri_{r+1}}]}_{=0}%\times\\
%&&\hspace{-16pt}
\mathbb E\big[\prod_{t\neq r}\big(a_{l_{t}i_tj_t}a_{l_{t}i_{t+1}j_t}-\rho\delta_{i_ti_{t+1}}\big)\big] = 0.%\\
%&=&0.
\end{eqnarray*}Hence we restrict attention to the subset of $P$ where each $j_r$ occurs in at least four pairs i.e. in $(i_r,j_r),(i_{r+1},j_r)$ and also in $(i_s,j_s),(i_{s+1},j_s)$ for some $s\neq r$. 
We  continue to call this reduced subset by $P$, and the corresponding pairs, edges. If $j_r=j_s$ then they are said to be matched and likewise for the $i$-vertices. 

Define the set of vertices $V_{I}$ and $V_J$ which are the distinct indices from  $\{i_1,...,i_k\}$ and $\{j_1,...,j_k\}$ respectively. Note that there are at most $2k$ edges in $P$ but each edge appears at least twice. Let $E$ be the set of distinct edges between the vertices in $V_I$ and $V_J$. This defines a simple connected bi-partite graph. 
%Consider the set $P$ as a  simple connected bipartite graph with vertex set $V=V_{I}\cup V_{J}$, where $V_I=\{i_1,...,i_k\}$ and $V_J=\{j_1,...,j_k\}$, and with edge set $E$ exactly equal to $P$. 
 Then clearly, $\#E\leq k$.
Since every $j$-index was originally matched, $\#V_J\leq k/2.$
We also know from the connectedness property that
\begin{equation}\label{eq:connectedgraph}\#V_I+\#V_J \leq \#E+1\leq k+1.
\end{equation}
Hence the contribution to (\ref{eq:tracecross}) is bounded above by 
\begin{equation}\label{order}
    O\left(\dfrac{p^{\#V_I}n^{\#V_J}}{p^{k/2+1}n^{k/2}}\right)=O\left(\dfrac{p^{k+1-\#V_J} n^{\#V_J}}{p^{k/2+1}n^{k/2}}\right)=O\left(\left(\dfrac{p}{n}\right)^{k/2-\#V_J}\right).
\end{equation}
If $\#V_J<\dfrac{k}{2}$ then the above expression goes to $0$. 
So the only possible non-zero contribution to the limit of (\ref{eq:tracecross}) will come when $\#V_J=k/2$. This immediately shows that if $k$ is odd, then we do not get such a contribution and hence, $$\dfrac{1}{p}\mathbb E \text{Tr}(E_{l_{1}}^{\epsilon_1} \cdots E_{l_{k}}^{\epsilon_k})\to 0.$$
So now consider the (contributing) case when $k$ is even and $\#V_J=k/2$, that is, let $k=2m$ and $\#V_J=m$. 
Then $$O\left(\dfrac{p^{\#V_I}n^{\#V_J}}{p^{k/2+1}n^{k/2}}\right)=O(p^{\#V_I-(k/2+1)})=O(p^{\#V_I-(m+1)}).$$ On the other hand, from (\ref{eq:connectedgraph}), when $k=2m$ and $\#V_J=m$, we get $\#V_I\leq m+1$. So for a possible non-zero contribution, we must have $\#V_I=m+1$. 
This implies that 
$$m+1+m=\#V_I+\#V_J\leq \#E+1\leq 2m+1$$ and hence $\#E=2m$. In other words, each edge must appear exactly $2$ times.

Suppose $(i_r,j_r)=(i_{r+1},j_r)$ for some $r$. Since each edge appears exactly twice, this pair will be independent of all others and therefore 
\begin{eqnarray*}\mathbb E\big[\prod_{t=1}^k\big(a_{l_{t}i_tj_t}a_{l_{t}i_{t+1}j_t}-\rho\delta_{i_ti_{t+1}}\big)\big]&\hspace{-5pt}=&\hspace{-5pt}\underbrace{\mathbb E[a_{l_ri_rj_r}a_{l_ri_{r+1}j_r}-\rho\delta_{i_ri_{r+1}}]}_{=0} %\times\\
%&&\hspace{-15pt}
\mathbb E\big[\prod_{t\neq r}\big(a_{l_{t}i_tj_t}a_{l_{t}i_{t+1}j_t}-\rho\delta_{i_ti_{t+1}}\big)\big] = 0. %%\\
%&=&0.
\end{eqnarray*}
Hence, such a combination  cannot contribute to (\ref{eq:tracecross}). So we may assume from now on that for every $r$, $i_r\neq i_{r+1}$ and hence $\delta_{i_ri_{r+1}}=0$ always. We 
continue to call this reduced subset by $P$. As a consequence,  (\ref{eq:tracecross}) reduces to 
$$\dfrac{1}{p}\mathbb E\text{Tr}(E_{l_1}^{\epsilon_1}\cdots E_{l_k}^{\epsilon_k})=\dfrac{1}{p^{m+1}n^m}\sum_{P}\mathbb E\big(\prod_{r=1}^{2m} a_{l_{r}i_rj_r}a_{l_{r}i_{r+1}j_{r}}\big).$$
Due to the preceding discussion, we have two situations. Suppose $(i_r,j_r)=(i_s,j_s)$ for some $s\neq r$. Note that $j_r$ and $j_s$ are also adjacent to $i_{r+1}$ and $i_{s+1}$ respectively. Due to the nature of the edge set $P$, 
this forces $i_{r+1}=i_{s+1}$. Similarly if $(i_r, j_r)=(i_{s+1}, j_s)$ then it would force $i_{r+1}=i_s$.
Let $\mathcal P(2m)$ be the set of all possible pair partitions of the set $\{1,...,2m\}$. We will think of each block in the partition to represent the equal pairs of edges in the graph. Now recalling the definition  (\ref{eq:axy}), the moment structure, and the above developments, it is easy to verify that the possibly contributing part of the above sum (and hence of (\ref{eq:tracecross})) can be re-expressed as  
\begin{eqnarray}
\sum_{\stackrel{i_1,...,i_{2m}}{j_1,...,j_{2m}}}\hspace{-2pt}\sum_{\pi\in \mathcal{P}_{2}(2m)}\hspace{-3pt}\dfrac{1}{p^{m+1}n^m}\hspace{-7pt}\prod_{(r,s)\in\pi}\hspace{-9pt}\Big[\delta_{l_rl_s}\delta_{\epsilon_r\epsilon_s}(\rho^2\delta_{i_ri_{s+1}}\delta_{j_rj_s}\delta_{i_{r+1}i_{s}}%\\
+\delta_{i_ri_s}\delta_{j_rj_s}\delta_{i_{r+1}i_{s+1}})\nonumber\\ 
+\delta_{l_rl_s}(1-\delta_{\epsilon_r\epsilon_s})(\rho^2\delta_{i_ri_s}\delta_{j_rj_s}\delta_{i_{r+1}i_{s+1}}+\delta_{j_rj_s}\delta_{i_ri_{s+1}}\delta_{i_{r+1}i_s})\Big]\label{eq:rho_one}
\end{eqnarray}
which equals 
\begin{eqnarray}
&&\hspace{-20pt}\sum_{\stackrel{i_1,...,i_{2m}}{j_1,...,j_{2m}}}\sum_{\pi\in P_2(2m)}\dfrac{1}{p^{m+1}n^m}\prod_{(r,s)\in\pi}\delta_{l_rl_s}\delta_{j_rj_s}\Big[\delta_{i_ri_{s+1}}\delta_{i_si_{r+1}}\big(\rho^2\delta_{\epsilon_r\epsilon_s} %\nonumber\\
%&&
+(1-\delta_{\epsilon_r\epsilon_s})\big)+\text{other terms}\Big].\hspace{1 cm}\label{red_eqn}
\end{eqnarray}
We first consider a special case and extract some crucial information that will be useful for the general case. Suppose $t=1,\rho=1$. Then $Y=X$, and we can take 
$\epsilon_r$ to be same for all $r, s$. We know that $\sqrt{np^{-1}}(XX^{*}-I_p)$ converges to a semi-circular variable. This immediately implies that the limit of (\ref{eq:tracecross}) and hence of 
(\ref{eq:rho_one}) and (\ref{red_eqn}) in this special case equals $\#{\rm{NC}}_2(2m)=C_m$, ;the $m$th Catalan number. This means that 
$$\sum_{i_1,i_2,\ldots, i_m=1}^{p}\sum_{j_1,j_2,\ldots,j_m=1}^{n}\sum_{\pi\in P_2(2m)}\dfrac{1}{p^{m+1}n^m}\hspace{-5pt}\prod_{(r,s)\in\pi}\hspace{-5pt}\delta_{j_rj_s}(\delta_{i_ri_{s+1}}\delta_{i_si_{r+1}}+\delta_{i_ri_s}\delta_{i_{r+1}i_{s+1}}) \to \hspace{-10pt}\sum_{\pi\in {\rm{NC}}_2(2m)}\hspace{-10pt}1.$$

Now we note that for $\pi\in {\rm{NC}}_2(2m)$, $$\sum_{i_1,...,i_{2m},j_1,...,j_{2m}}\dfrac{1}{p^{m+1}n^m}\prod_{(r,s)\in\pi}\delta_{j_rj_s}\delta_{i_ri_{s+1}}\delta_{i_si_{r+1}}=1.$$

The reason is as follows. If $(r,s)\in\pi$ then $j_r=j_s$, and there are $n^m$ ways of choosing the $j-$indices. Now let $\gamma=(1,2,...,2m)$ be the cyclic permutation. Then we note $$\prod_{(r,s)\in\pi}\delta_{i_ri_{s+1}}\delta_{i_si_{r+1}}=\prod_{(r,s)\in\pi}\delta_{i_ri_{\gamma\pi(r)}}\delta_{i_si_{\gamma\pi(s)}}=\prod_{r=1}^{2m}\delta_{i_ri_{\gamma\pi(r)}}.$$
So $i_r=i_{\gamma\pi(r)}$ for each $r$ if the above product is 1. But this means $i$ is constant on each block of $\gamma\pi$. As $\pi\in {\rm{NC}}_2(2m)$, $|\gamma\pi|=m+1$ and thus there are $p^{m+1}$ choices in total for the $i$'s.

These arguments establish that 
\begin{eqnarray}
\lim_{n\to\infty}\hspace{-5pt}\sum_{\pi\in P_2(2m)}\sum_{\stackrel{i_1,...,i_{2m}}{j_1,...,j_{2m}}}\hspace{-5pt}\dfrac{1}{p^{m+1}n^m}\hspace{-5pt}\prod_{(r,s)\in\pi}\hspace{-5pt}\delta_{j_rj_s}(\delta_{i_ri_{s+1}}\delta_{i_si_{r+1}}+\delta_{i_ri_s}\delta_{i_{r+1}i_{s+1}})\label{eq:addup}\\\ =\lim \sum_{\pi\in {\rm{NC}}_2(2m)}\dfrac{1}{p^{m+1}n^m}\prod_{(r,s)\in\pi}\delta_{j_rj_s}\delta_{i_ri_{s+1}}\delta_{i_si_{r+1}}.
\end{eqnarray}
This implies that the rest of the terms in (\ref{eq:addup}) must go to $0$, since these quantities are all non-negative. 

Now we consider the general case where $t\geq 1$ but for the moment assume that $\rho_l$'s are equal but  the common value is not necessarily equal to $1$. Going back to (\ref{red_eqn}) and noting that for each $\pi\in P_2(2m)$ and for each $(r,s)\in \pi$, $$|\delta_{l_rl_s}\delta_{j_rj_s}(\delta_{i_ri_{s+1}}\delta_{i_si_{r+1}}(\rho^2\delta_{\epsilon_r\epsilon_s}+(1-\delta_{\epsilon_r\epsilon_s}))+\text{other terms})|\leq 1,$$ we conclude that the expression in (\ref{red_eqn}) converges to 
\begin{equation*}
\sum_{\pi\in {\rm{NC}}_2(2m)}\prod_{(r,s)\in\pi}\delta_{l_{r}l_{s}}(\rho^2\delta_{\epsilon_r\epsilon_s}+(1-\delta_{\epsilon_r\epsilon_s}))\ \ \text{as}\ \ n \to \infty.
\end{equation*}

Now note that $\rho^2\delta_{\epsilon_r\epsilon_s}+(1-\delta_{\epsilon_r\epsilon_s})=(\rho^2)^{\delta_{\epsilon_r\epsilon_s}}$. Let $T(\pi)=\#\{(r,s)\in\pi:\epsilon_r=\epsilon_s\}$. Then the limit can be re-expressed as $$\sum_{\pi\in {\rm{NC}}_2(2m)}\prod_{(r,s)\in \pi}(\rho^2)^{\delta_{\epsilon_r\epsilon_s}}\delta_{l_rl_s}=\sum_{\pi\in {\rm{NC}}_2(2m)}\prod_{(r,s)\in \pi}\delta_{l_rl_s}(\rho^2)^{T(\pi)}.$$
If $e_1,...,e_{2m}$ are freely independent elliptic elements each with parameter $\rho^2$ in some NCP $(\mathcal{A}, \varphi)$, then the above expression  is nothing but $\varphi(e_{l_1}^{\epsilon_1}...e_{l_{2m}}^{\epsilon_{2m}})$. This proves the $*$-convergence (for the special case where all $\rho_l$'s  and all
$n_l$'s are identical.

In particular, if $\rho_l=1$ for all $l$, then $\{E^{(l)}, 1\leq i \leq t\}$ are asymptotically free semi-circular variables and if $\rho_l=0$ for all $l$, then they are asymptotically free circular variables. 

If we follow the above proof carefully, then it is clear that when we have possibly different $\rho_l$'s, and $n_l$'s, the argument for negligibility of the terms remains valid. Once we make the allowance for different $\rho$, the rest of the proof carries through and the product of $\{\rho_{l}^{T_{l}(\pi)}\}$ emerges in the limit. This completes the proof of the first part of (a). 

As discussed in the proof of Theorem \ref{thm: 1}, convergence with respect to the state $\tilde{\varphi}_p$ follows by the Borel-Cantelli Lemma after it is established that 
\begin{equation}\label{eq:fourthmom}\mathbb{E}\Big[p^{-1}{\rm Trace}(\Pi^k)-p^{-1}E [{\rm Trace}(\Pi^k)]\Big]^4=O(p^{-2}).
\end{equation}
%for any finite degree monomial $\Pi$ of $\{E_l,E_l^{*}:\ 1 \leq l \leq t\}$.  
Proof of (\ref{eq:fourthmom}) proceeds along lines similar to that in the proof of the first part of (a). We omit the details.  See the proof of Theorem 3.5 in \cite{BB2016} for similar arguments.  This complete the proof of (a).

\noindent (b) This is also similar to the proof of Theorem \ref{thm: 1}(b). Let the finite degree polynomial $\Pi$ of $\{E_l,E_l^{*}:\ 1 \leq l \leq t\}$ be symmetric. By Theorem \ref{thm: 2}(a), all moments of $\Pi$ converge almost surely. Also there is a $C > 0$, depending on $\Pi$ such that the limiting $k$th moment is bounded by $C^{k}$ for all $k$. This implies that these moments define a unique probability law whose support is a subset of the interval $[-C, \ C]$, and hence the ESD of $\Pi$ converges weakly almost surely to this law. 
\end{proof}

Figure \ref{fig2} reports the simulation results for a few polynomials when $p/n$ is small ($0.03$) for different values of $\rho$.

\bibliographystyle{abbrvnat}

%\bibliography{crosscov}

\begin{figure}[h!]
\includegraphics[height=70mm, width=50mm]{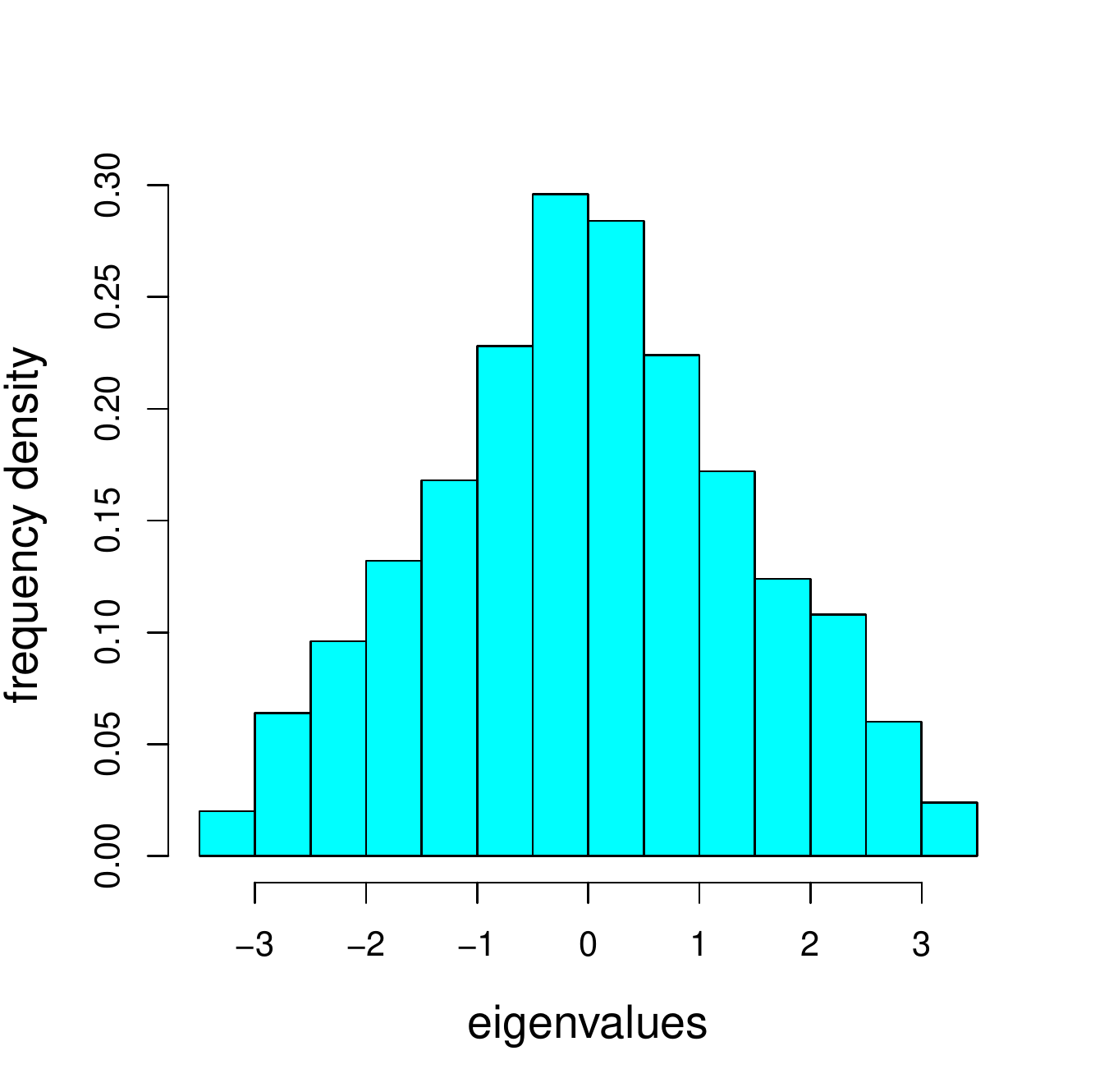}
\includegraphics[height=70mm, width=50mm]{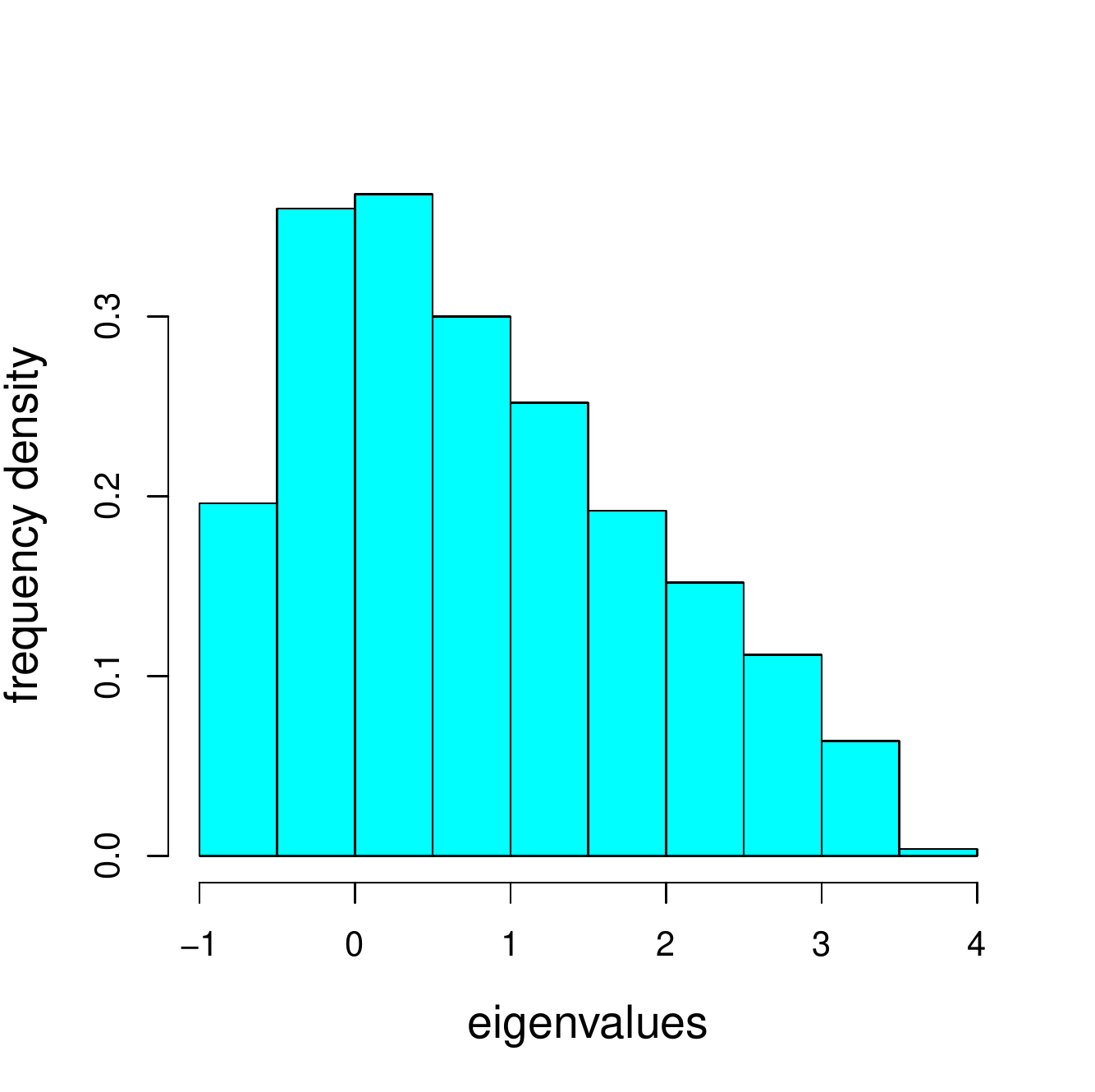}
\includegraphics[height=70mm, width=50mm]{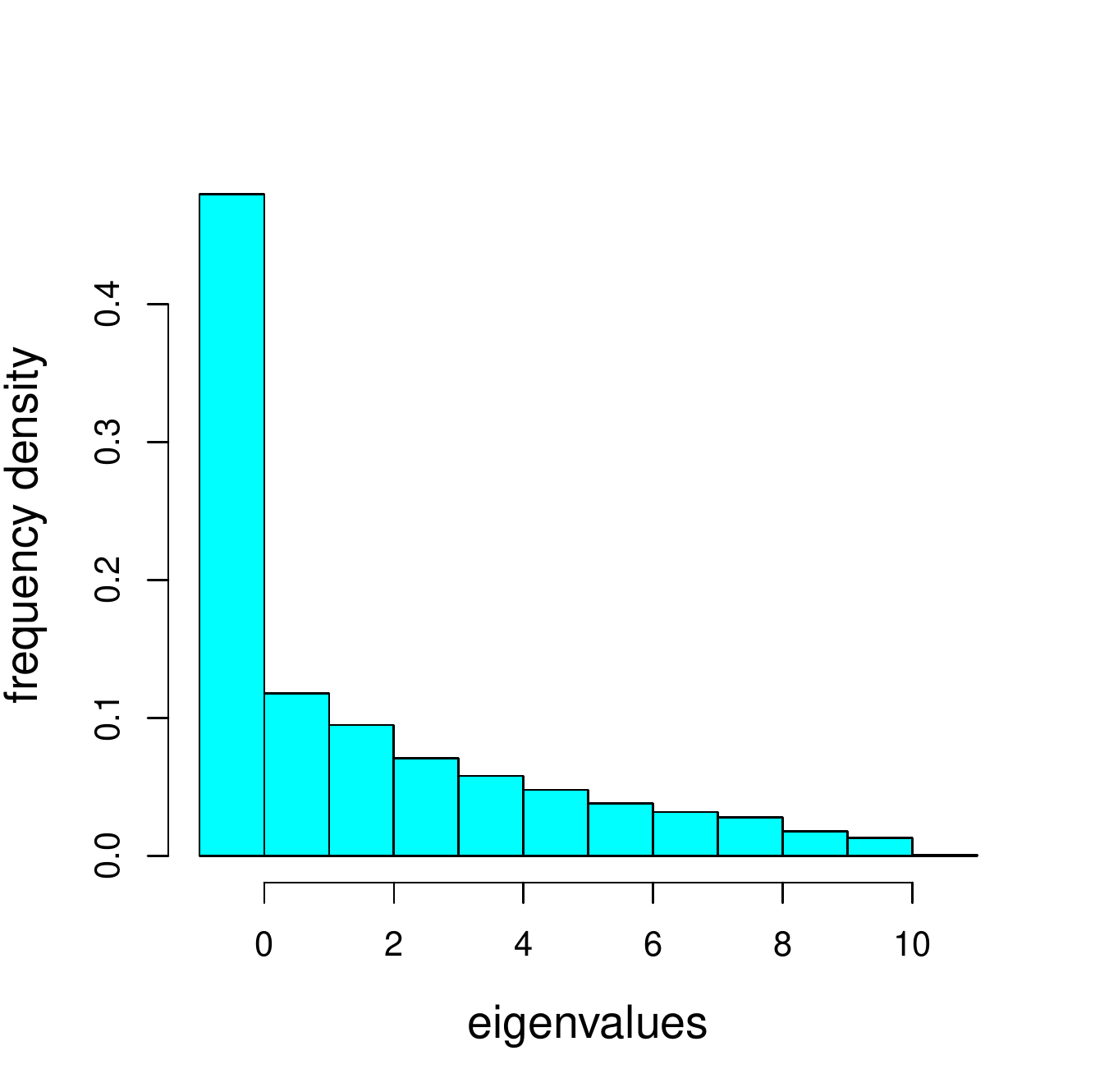}
\includegraphics[height=50mm, width=50mm]{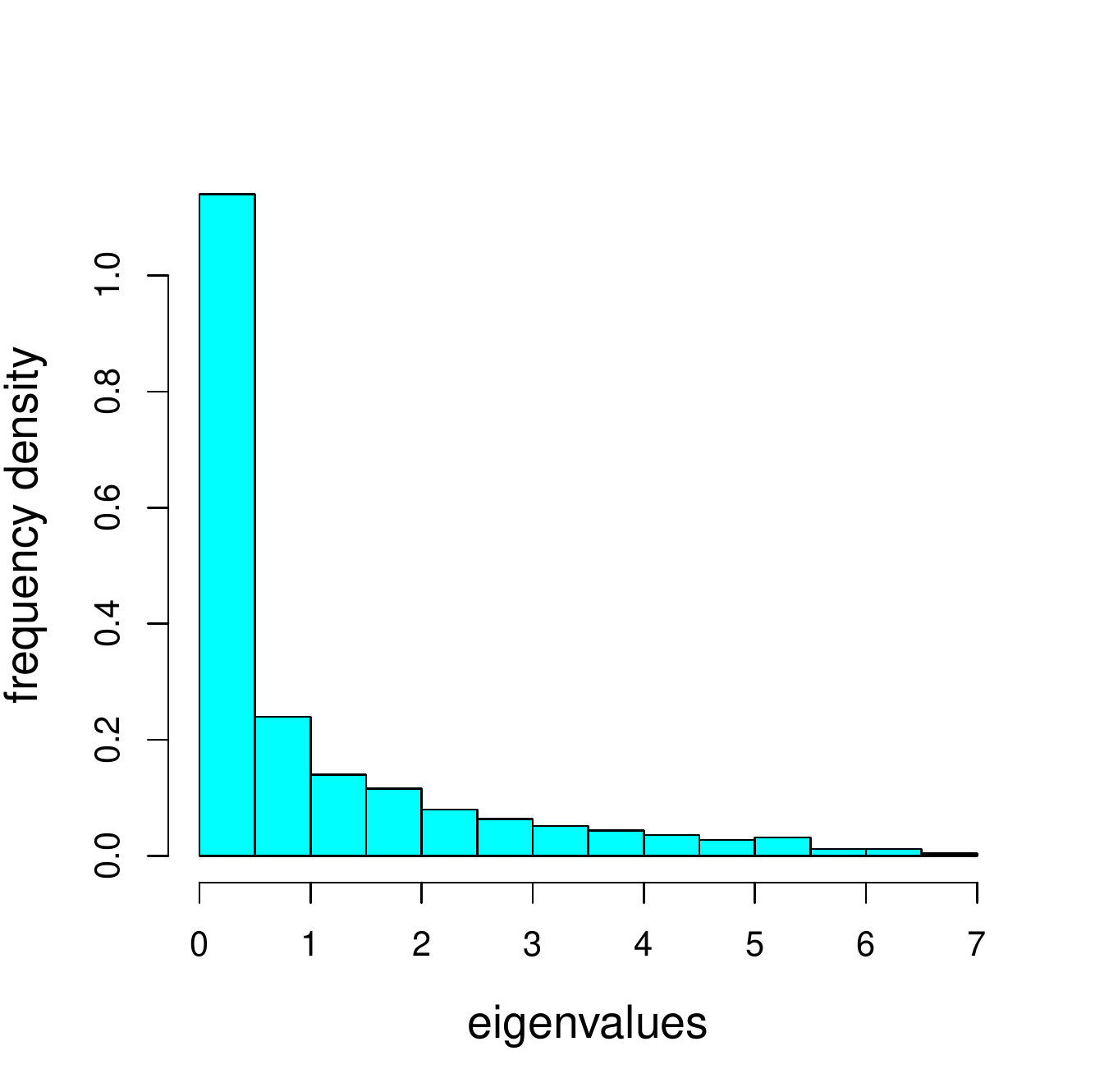}
\includegraphics[height=50mm, width=50mm]{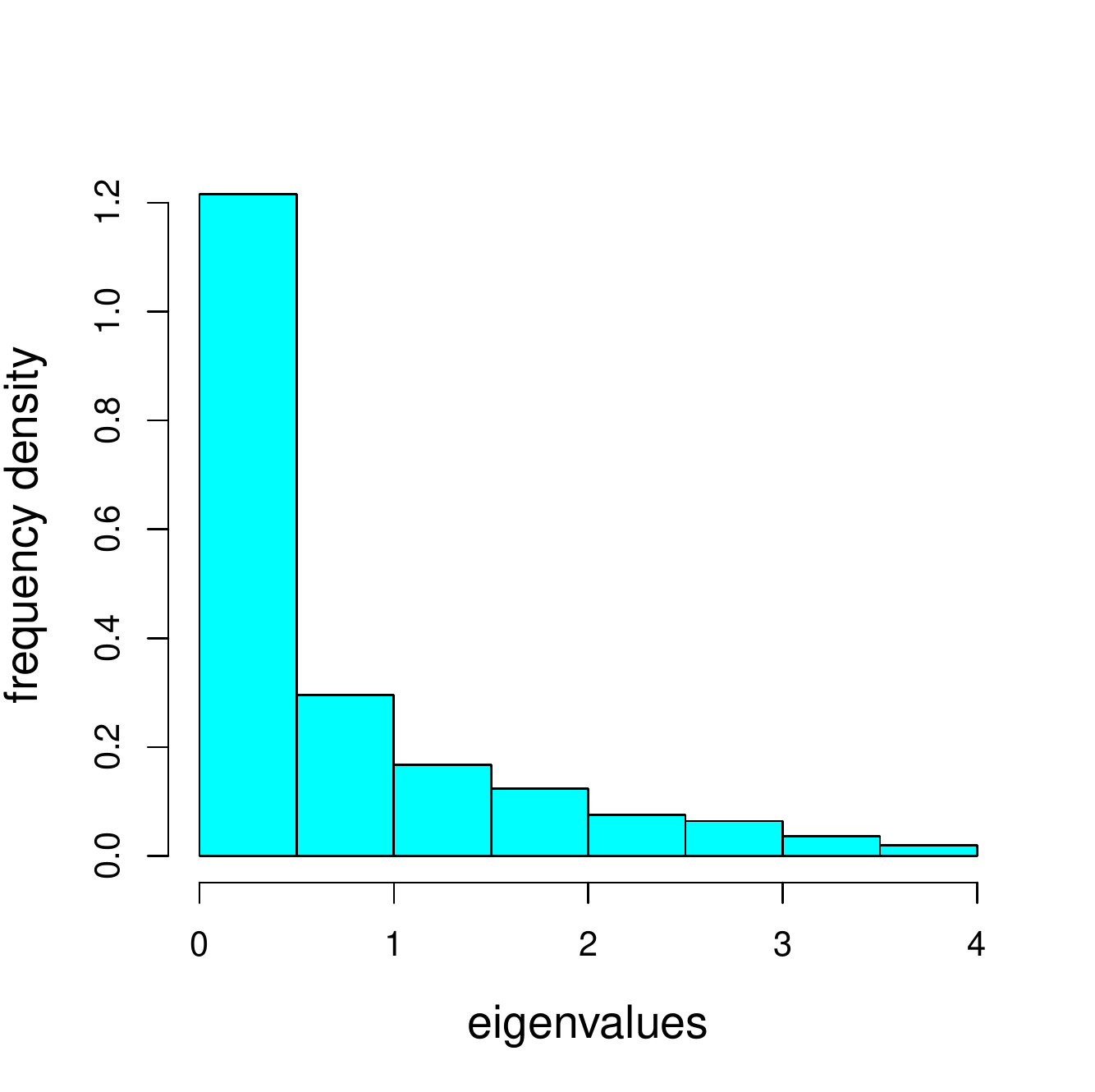}
\includegraphics[height=50mm, width=50mm]{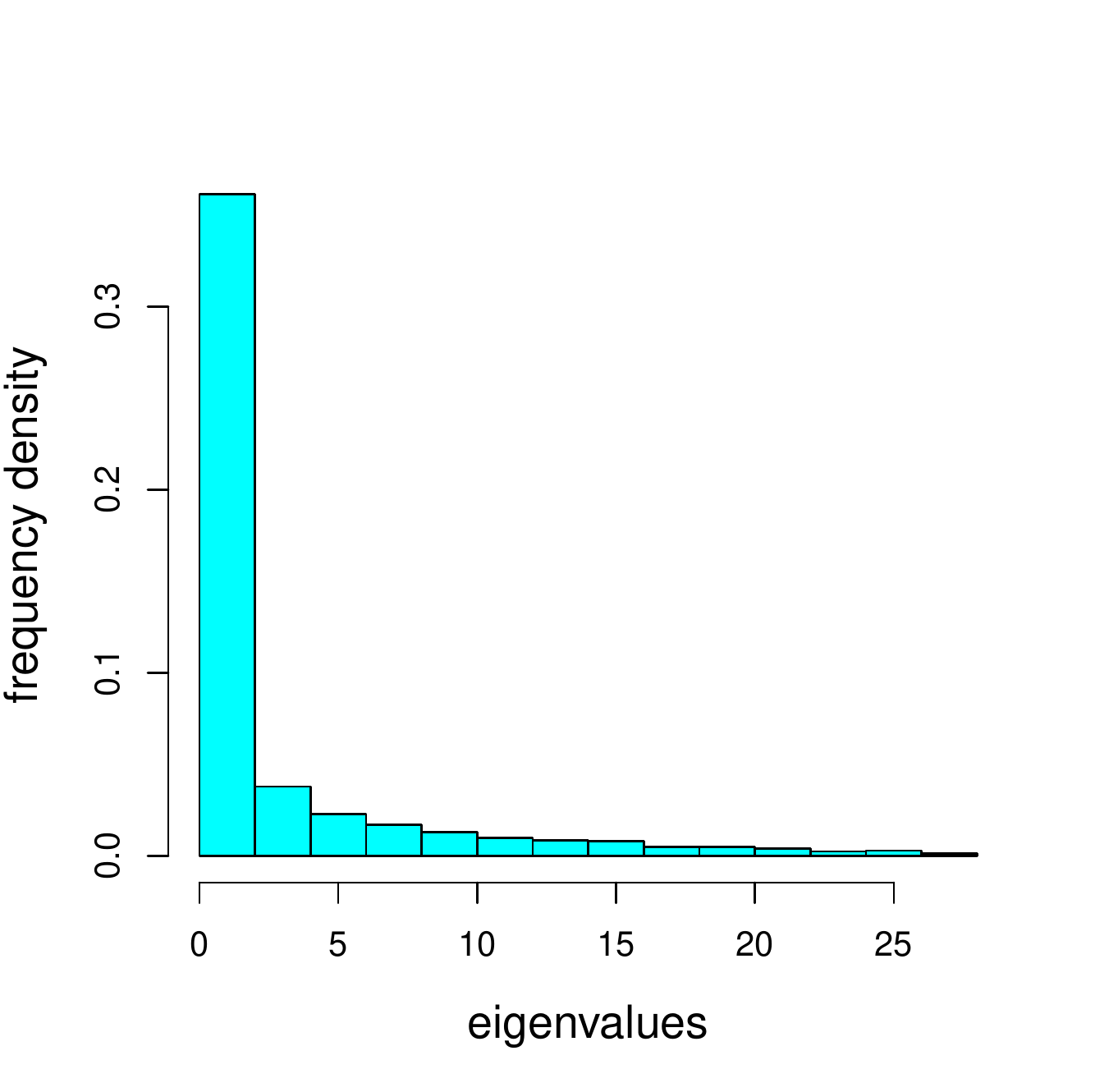}
\includegraphics[height=50mm, width=50mm]{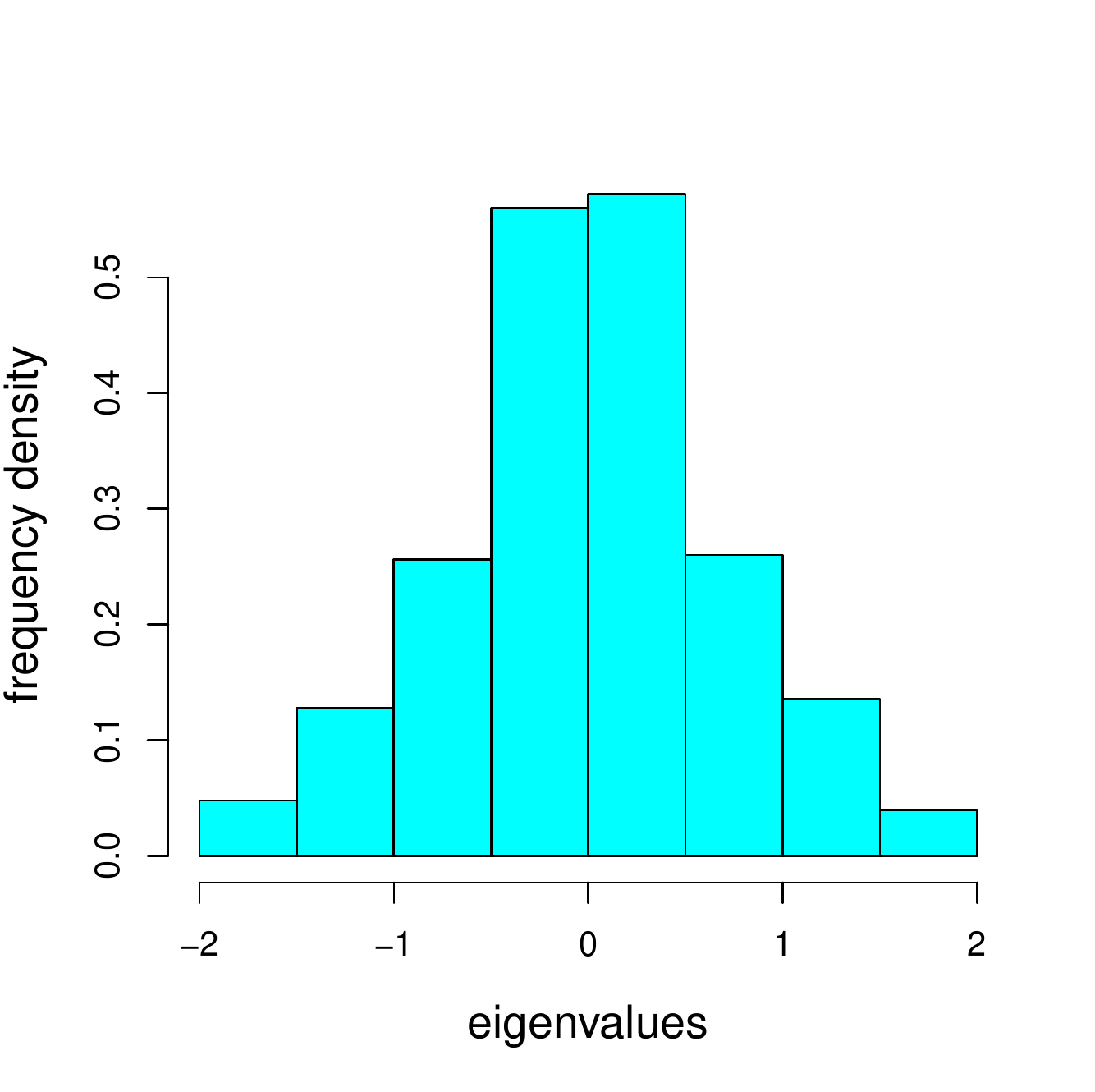}
\includegraphics[height=50mm, width=50mm]{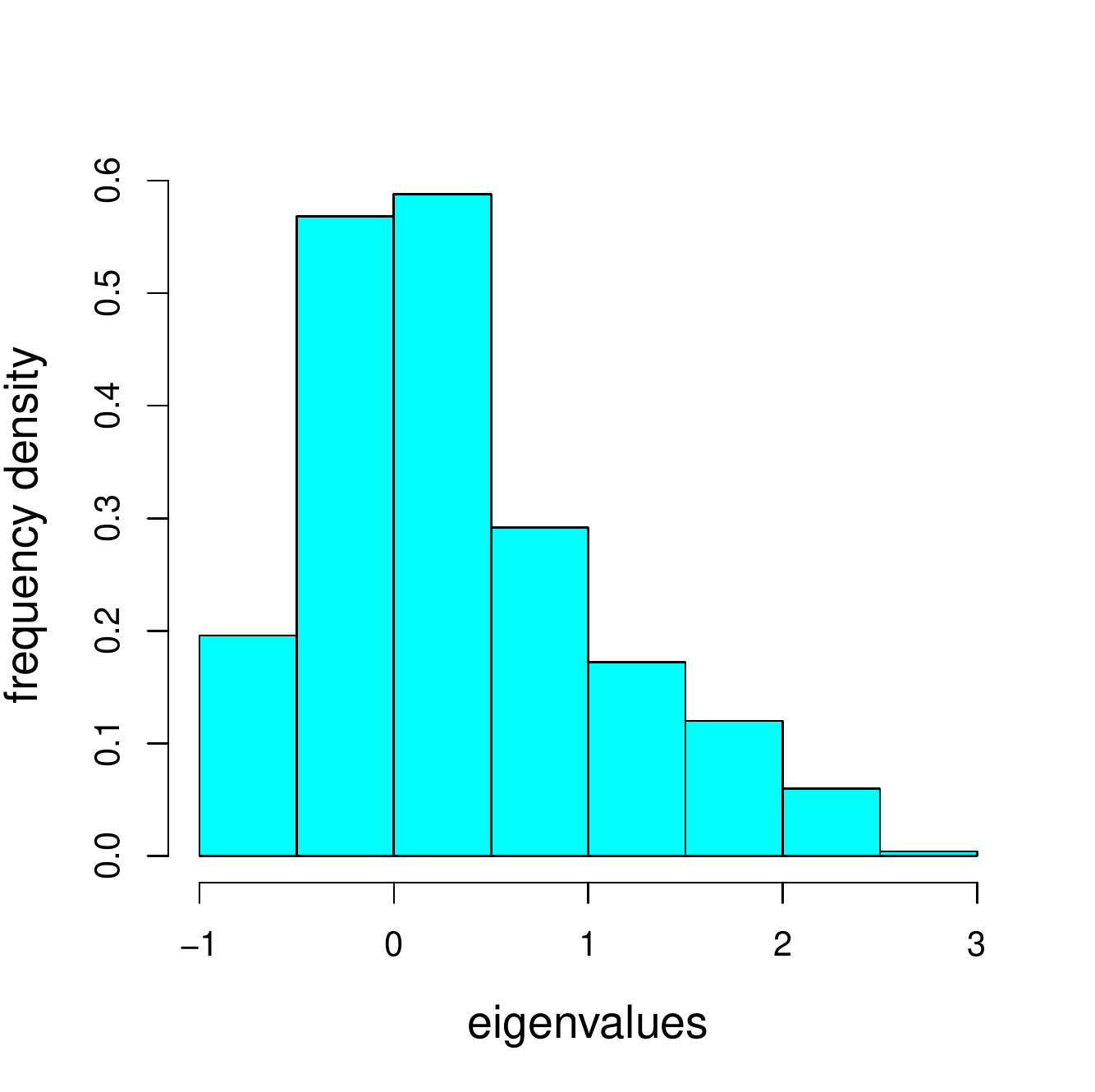}
\includegraphics[height=50mm, width=50mm]{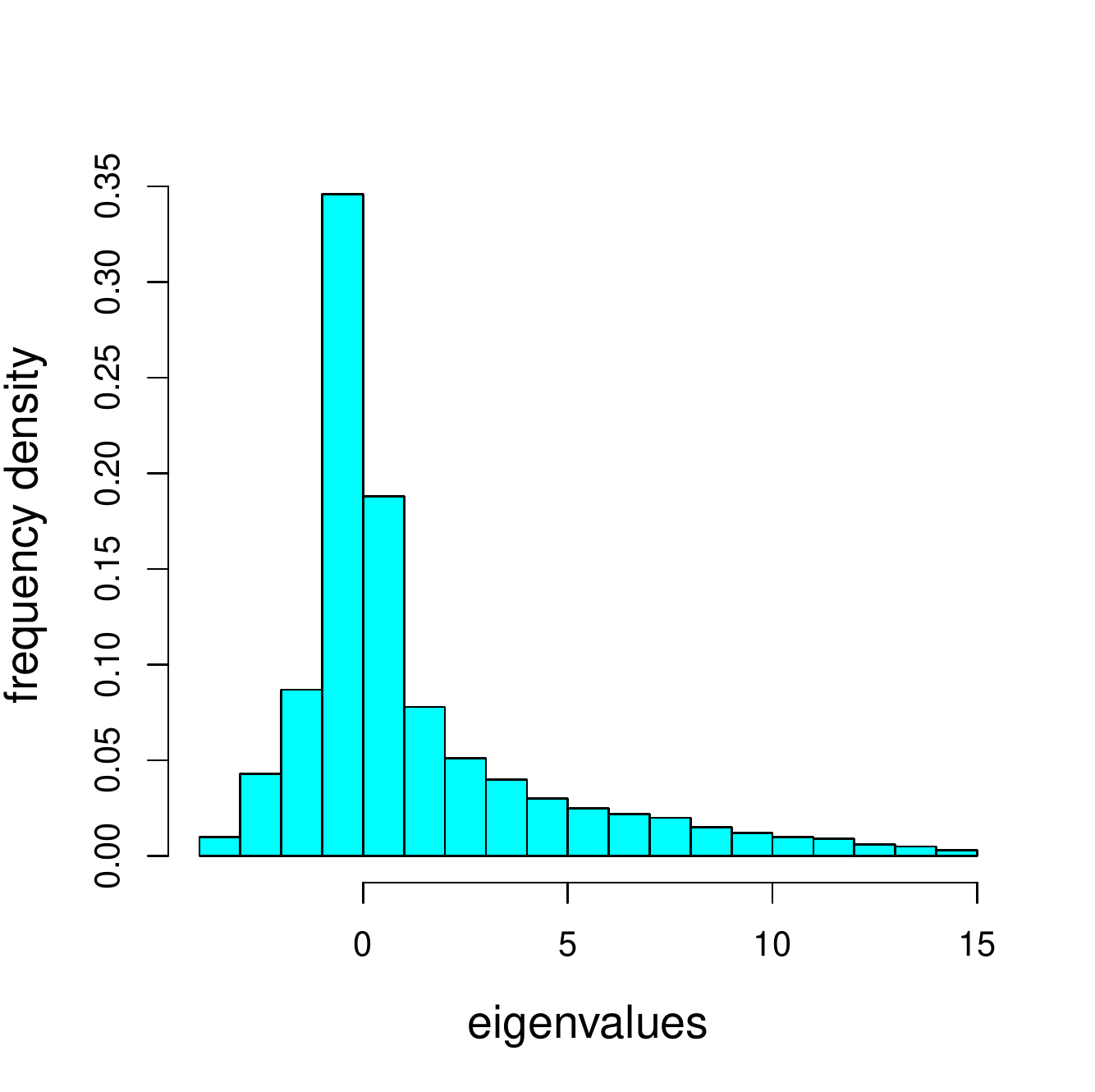}
\includegraphics[height=50mm, width=50mm]{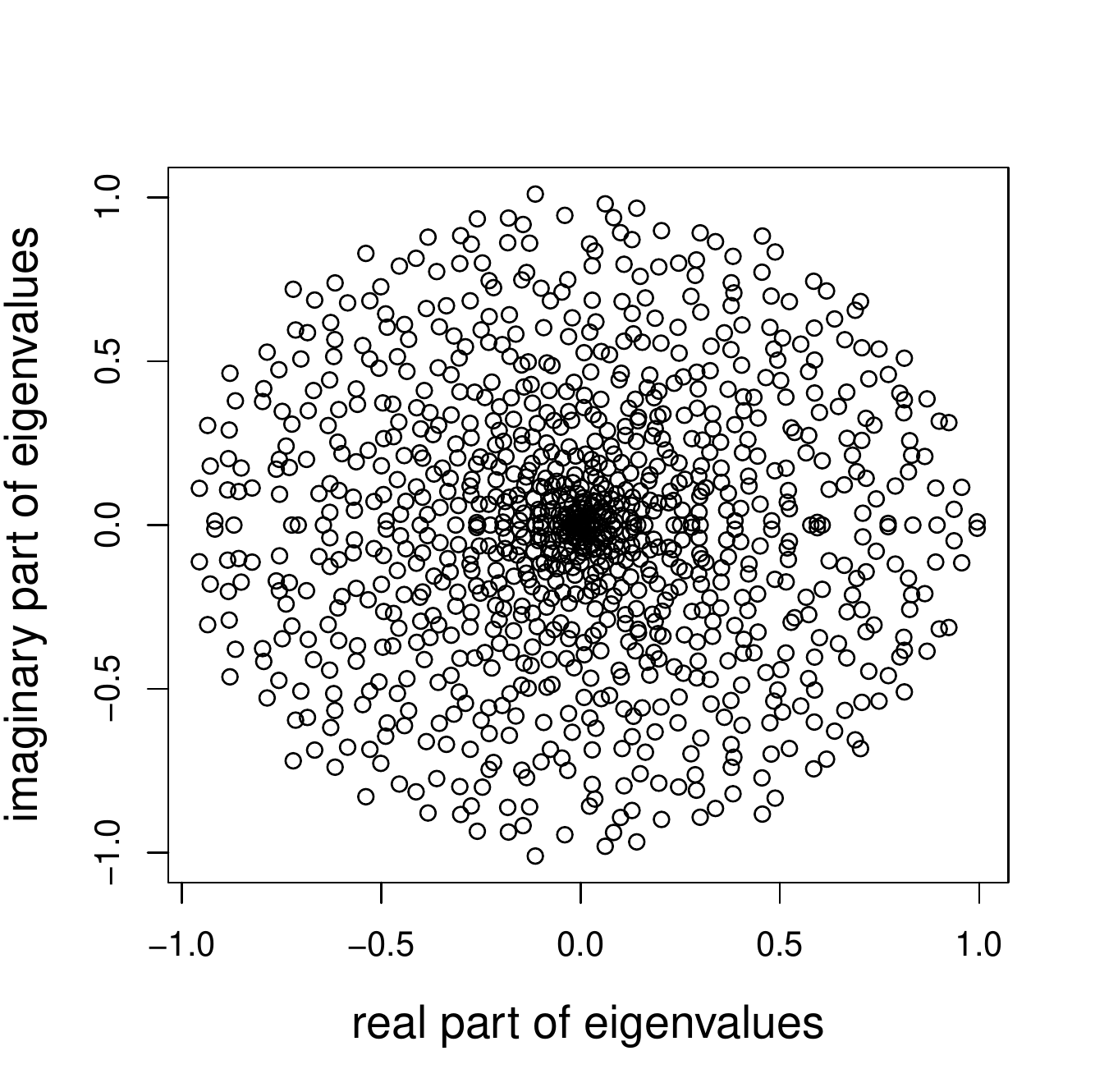}
\includegraphics[height=50mm, width=50mm]{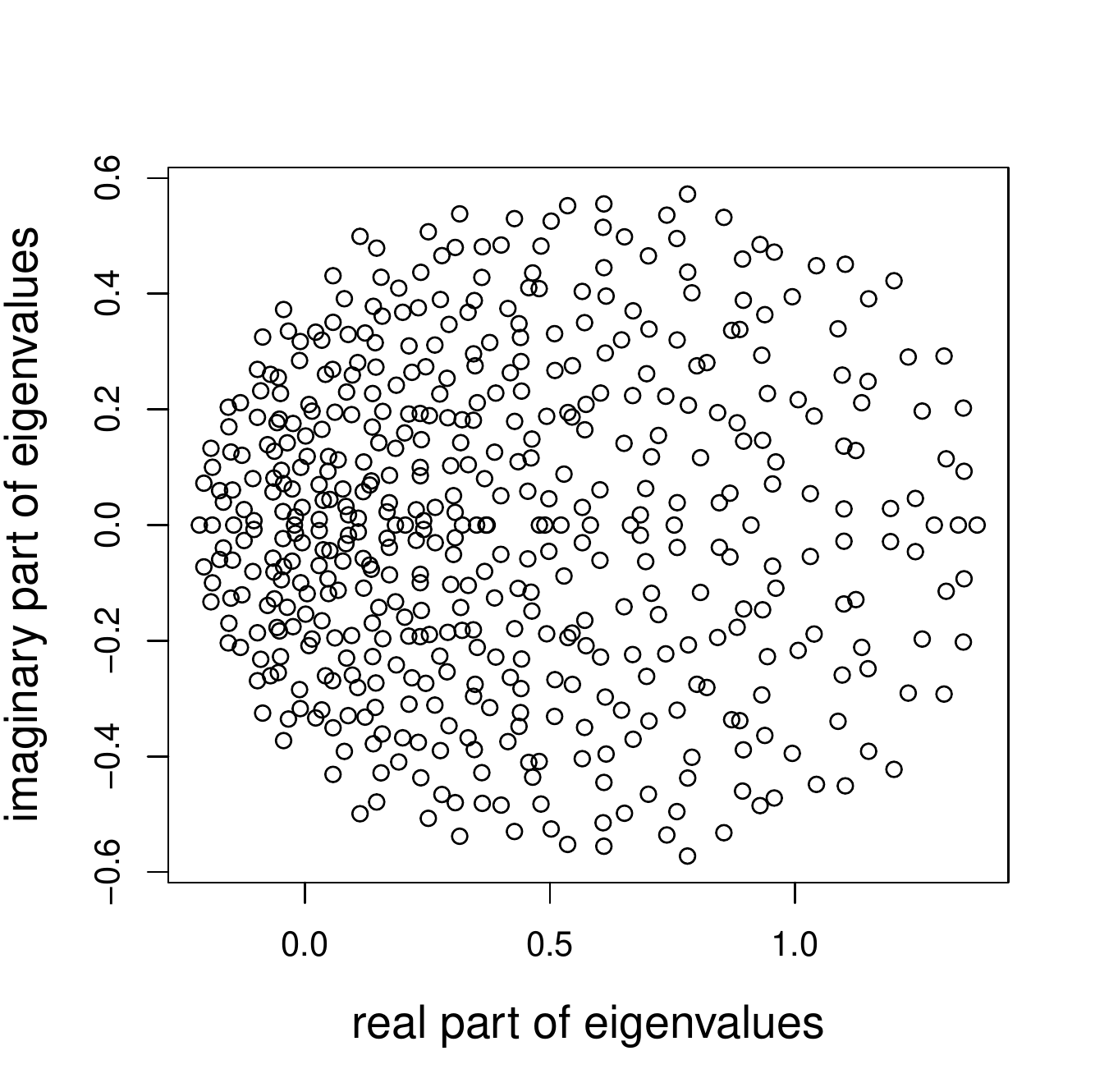}
\includegraphics[height=50mm, width=50mm]{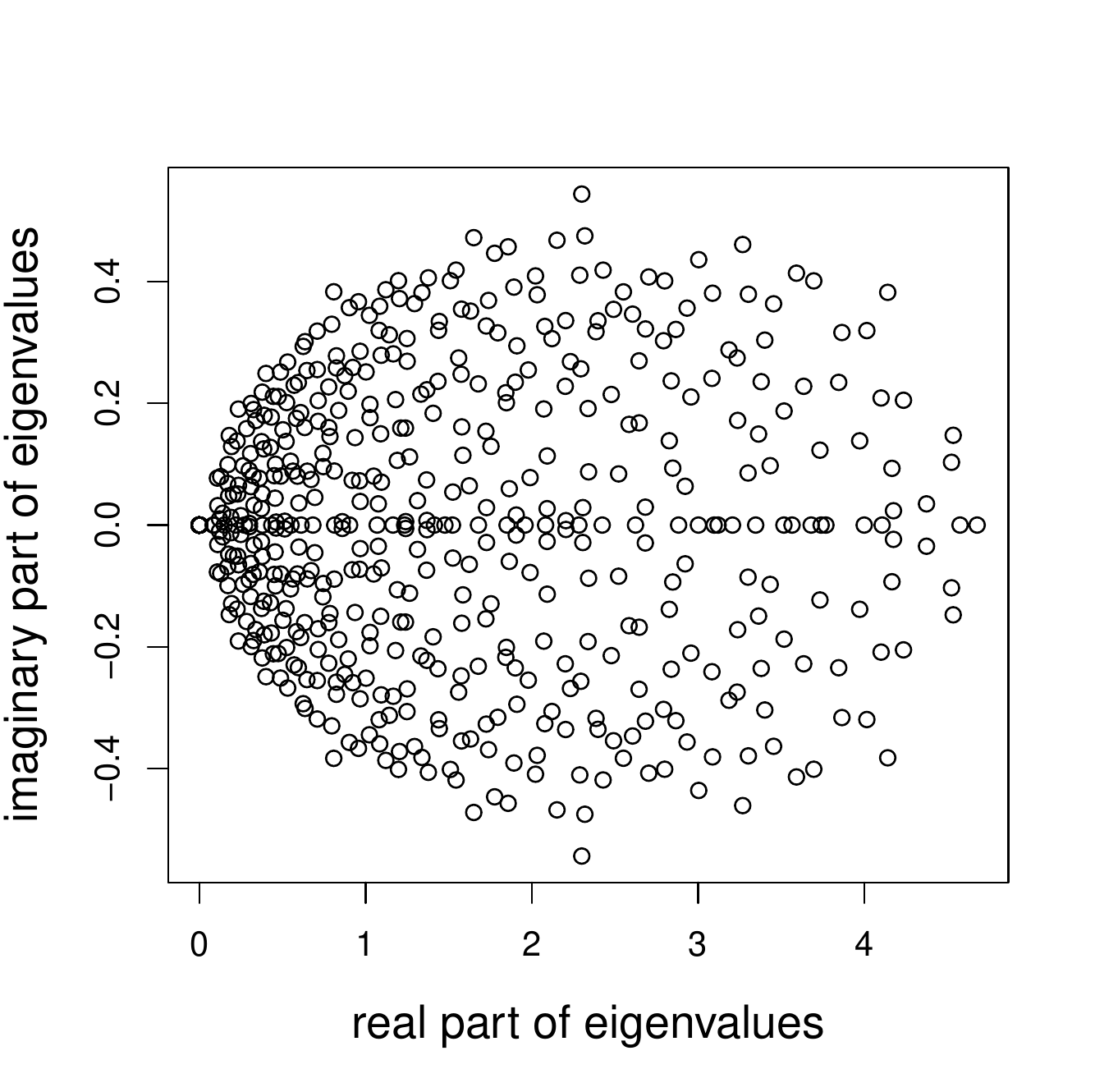}
\caption{Histogram for ESD of $(C+C^{*})$, $CC^{*}$ and $C_1C_2^{*}+C_2C_1^{*}$ ($n_1=n$, $n_2 = 2n$, $\rho_1 = \rho_2=\rho$) in Rows 1-3 respectively.  ESD of $C$ in Row 4. Column 1: $n=p=500$, $\rho=0$, 
Column 2: $n=1000, \ p=500$, $\rho = 0.4$ Column 3: $n=500, \ p=1000$, $\rho = 0.8$.}
%Row 1, columns 1: $(C+C^{*})$, Row 1, column 2: $CC^{*}$, Row 2, column 1: $C_1C_2^{*}+C_2C_1^{*}$. All figures are for $\rho=0$ and $y=1$.}
\label{fig1}\end{figure}

\begin{figure}
\includegraphics[height=55mm, width=50mm]{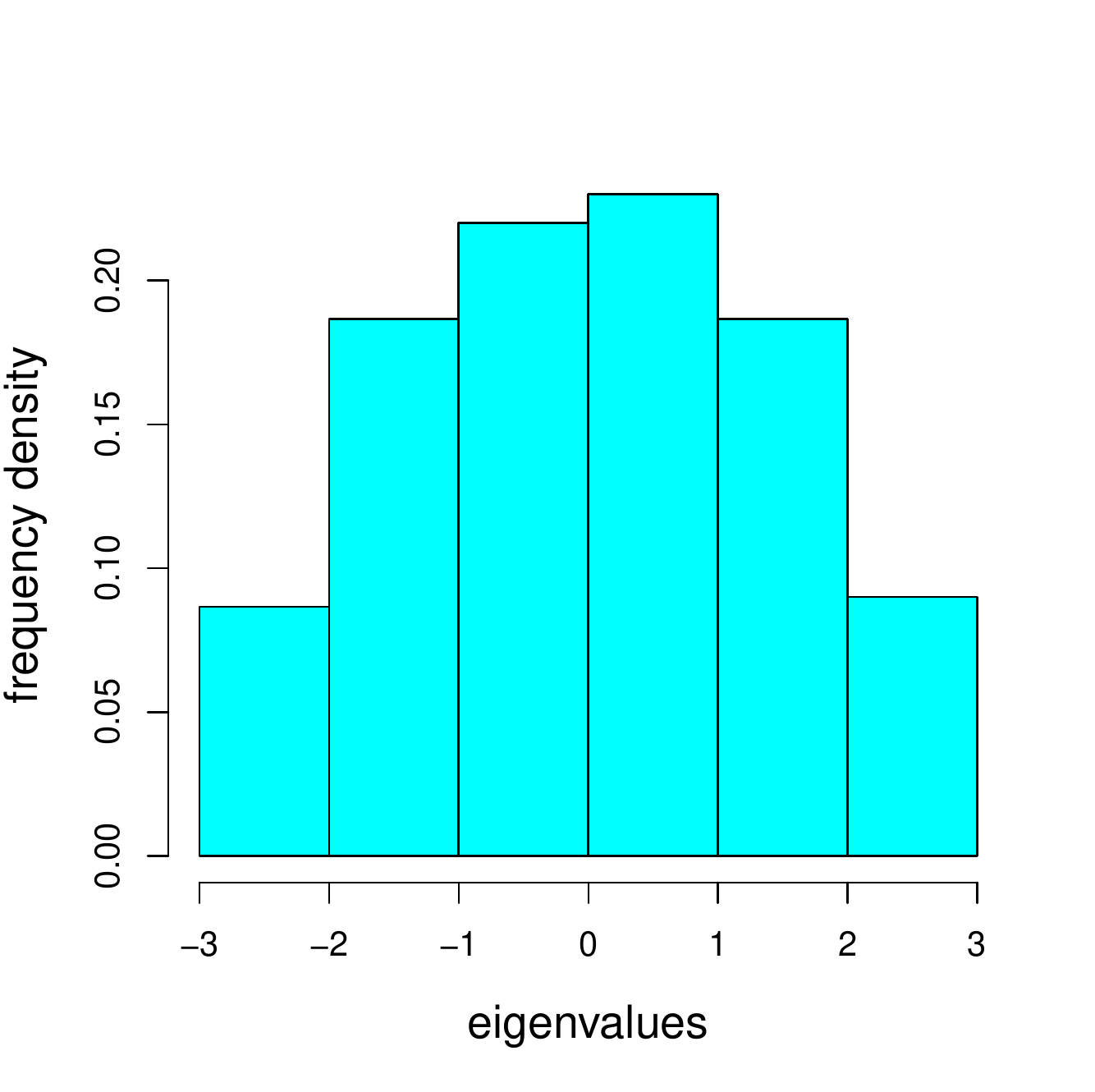}
\includegraphics[height=55mm, width=50mm]{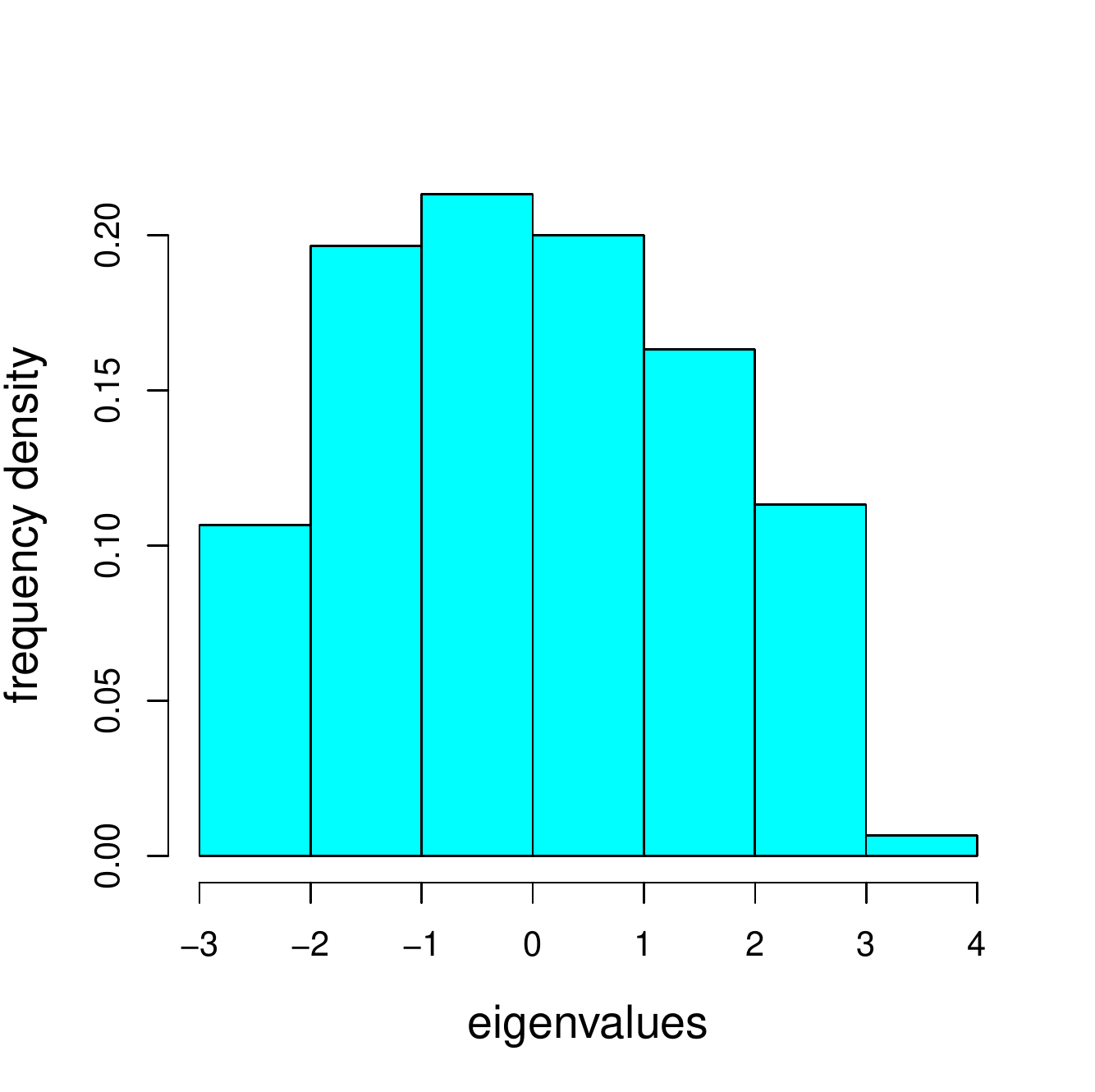}
\includegraphics[height=55mm, width=50mm]{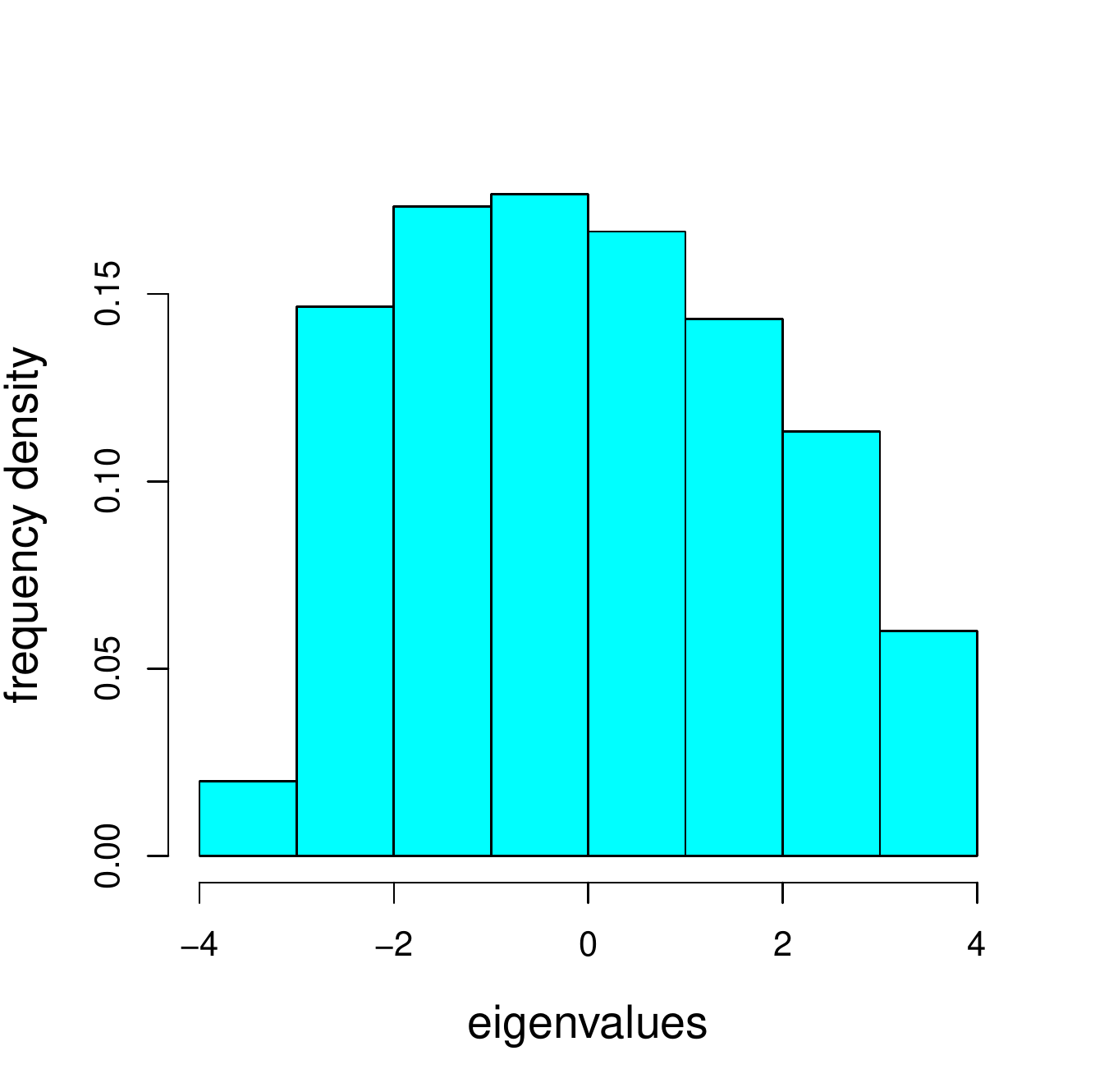}
\includegraphics[height=55mm, width=50mm]{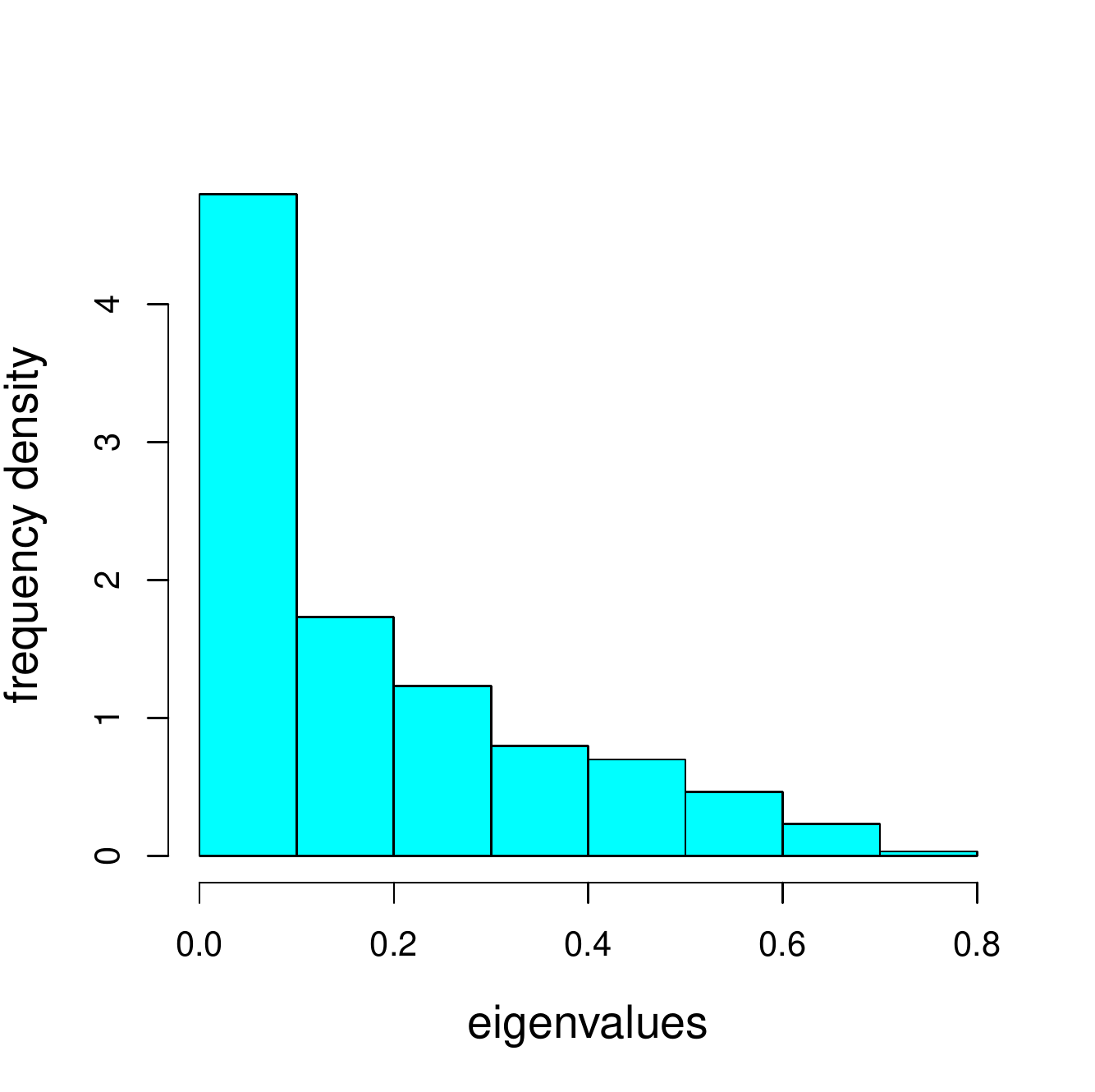}
\includegraphics[height=55mm, width=50mm]{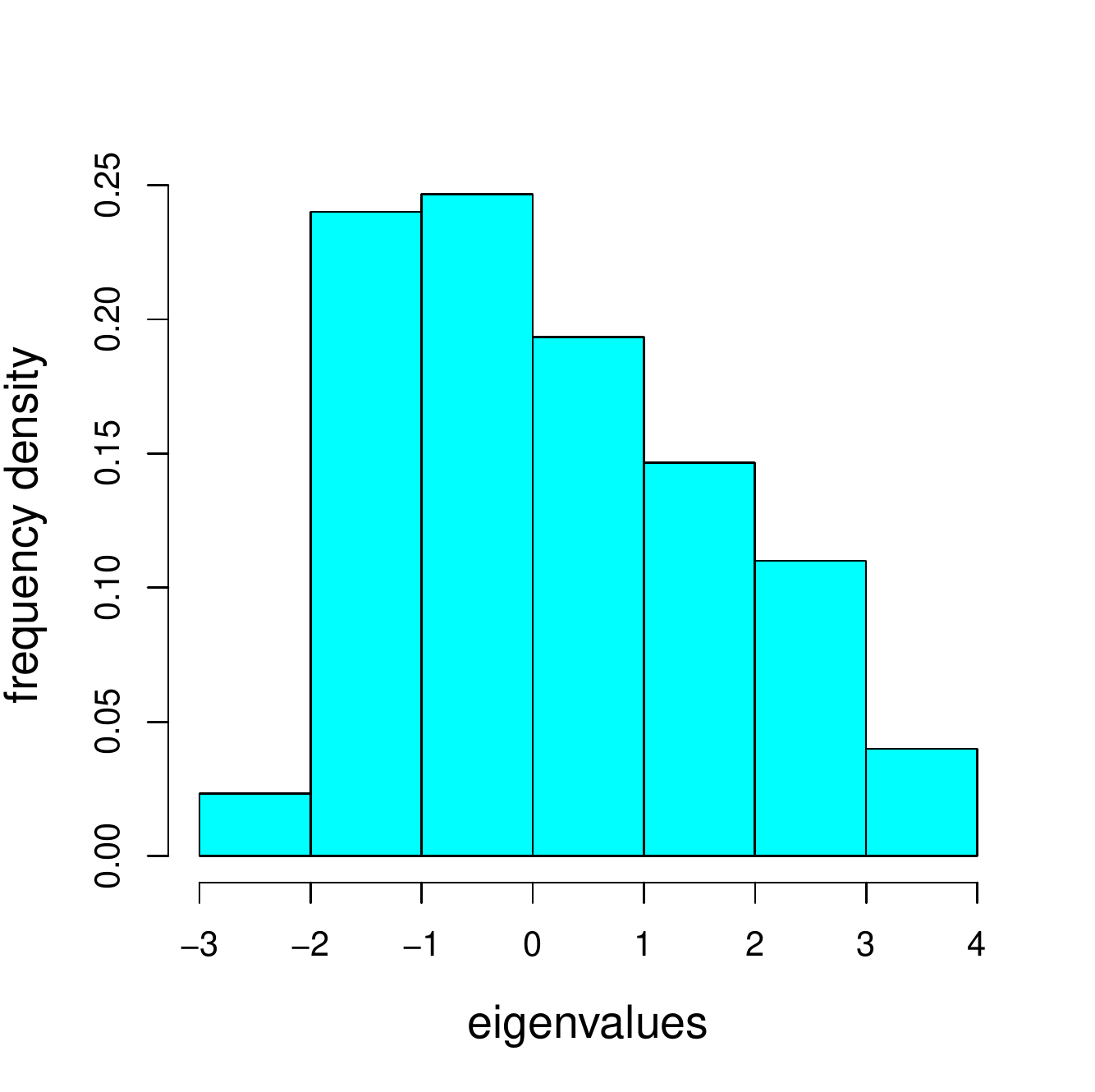}
\includegraphics[height=55mm, width=50mm]{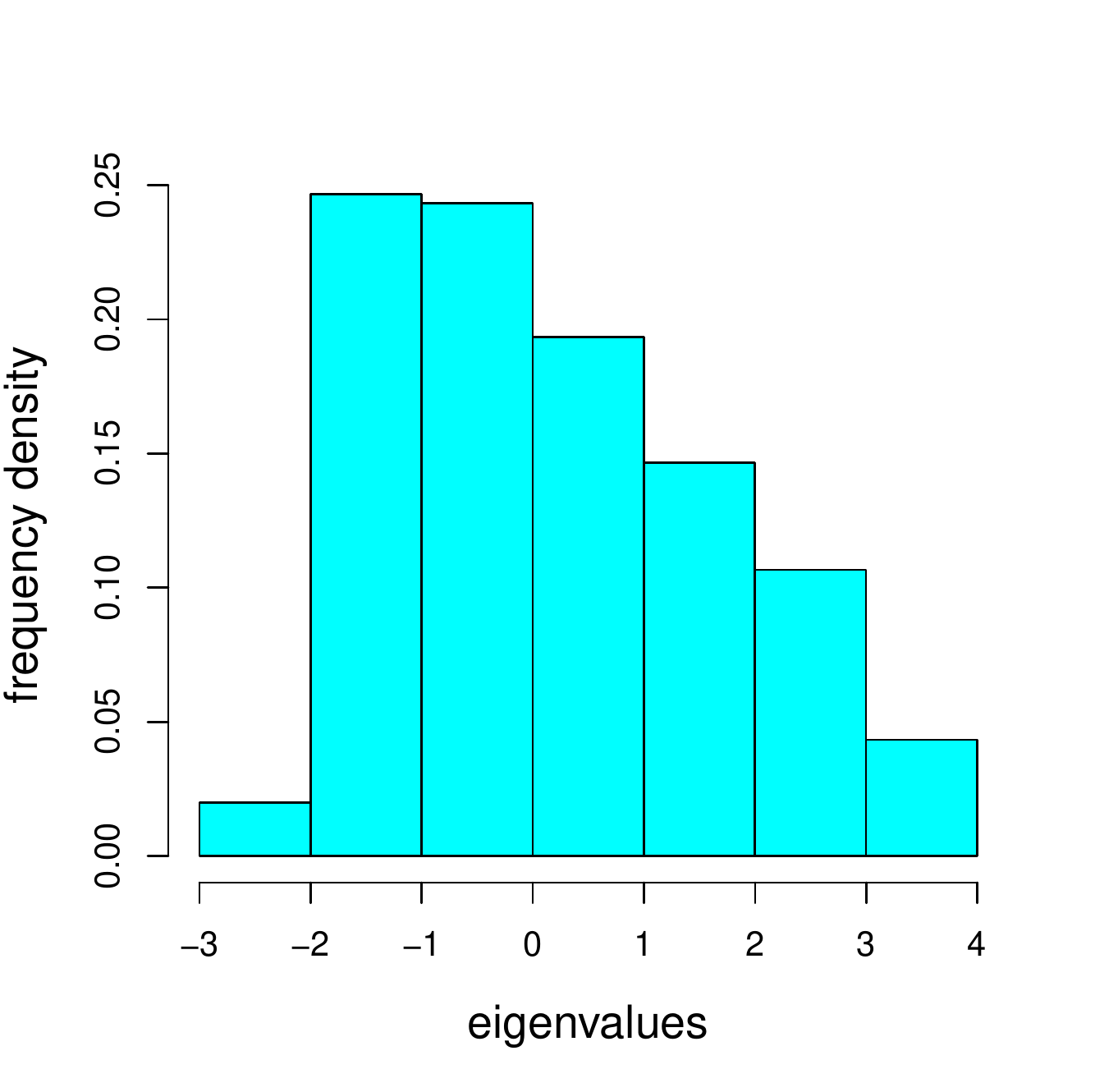}
\includegraphics[height=55mm, width=50mm]{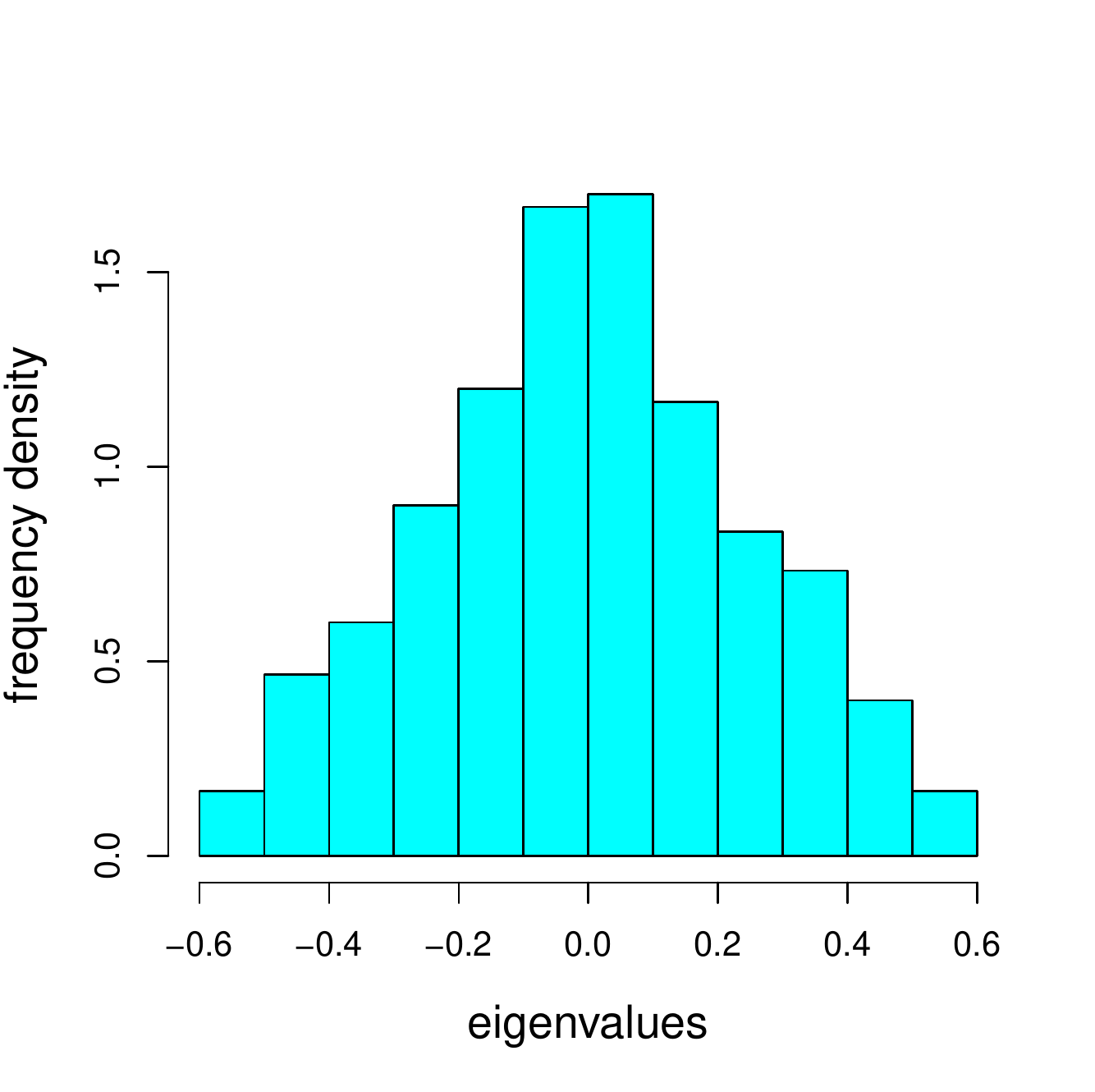}
\includegraphics[height=55mm, width=50mm]{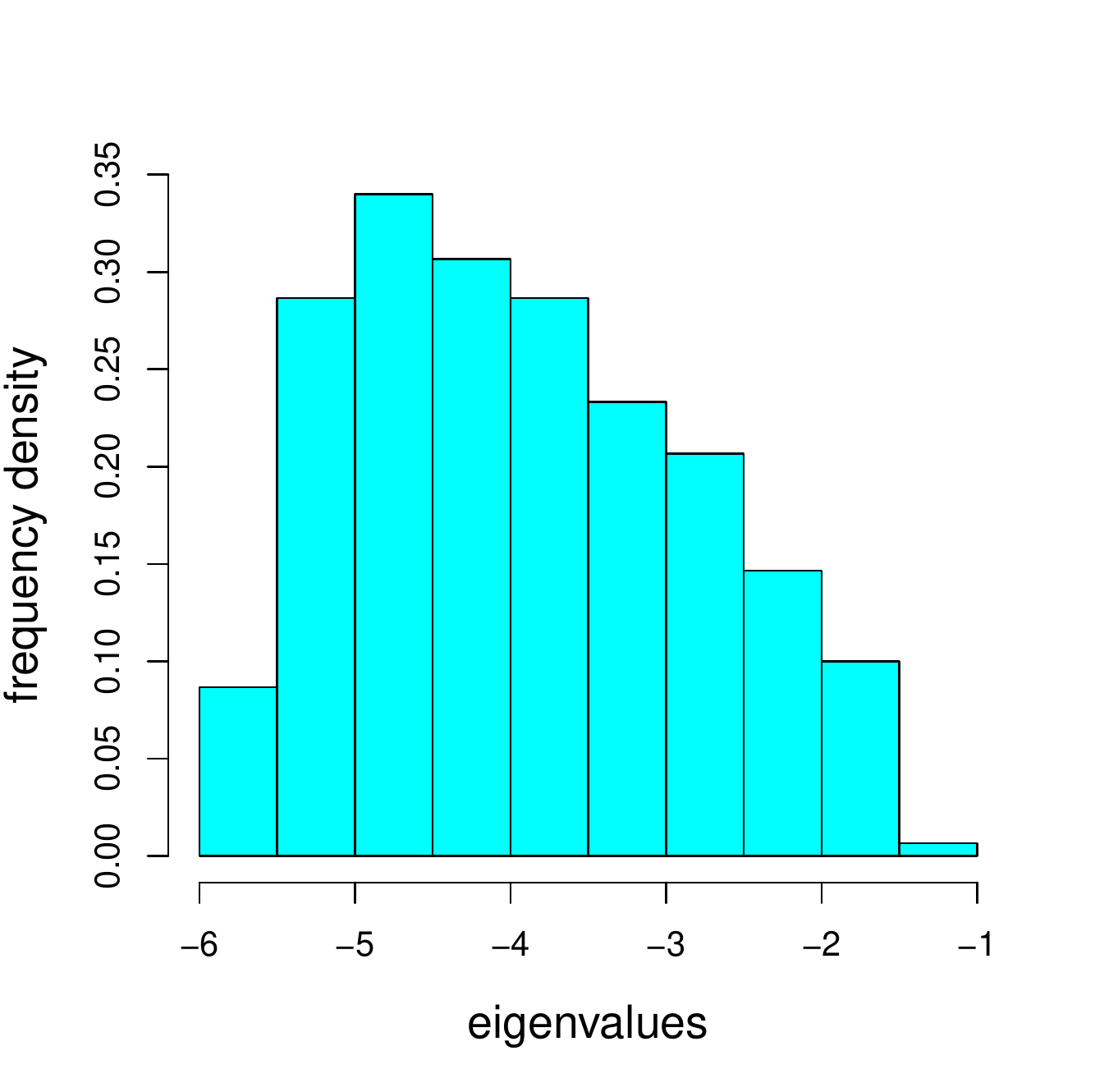}
\includegraphics[height=55mm, width=50mm]{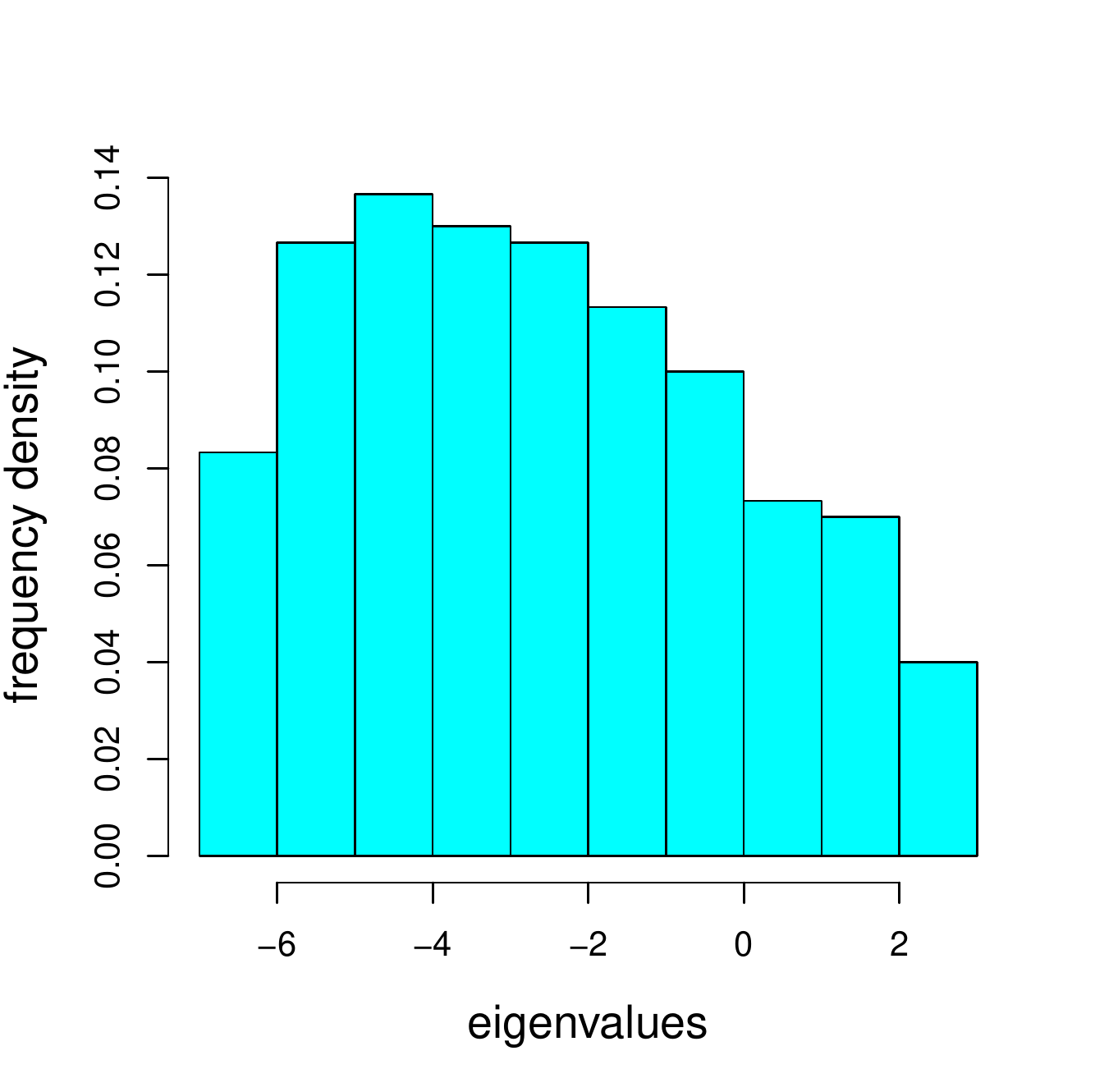}
\includegraphics[height=55mm, width=50mm]{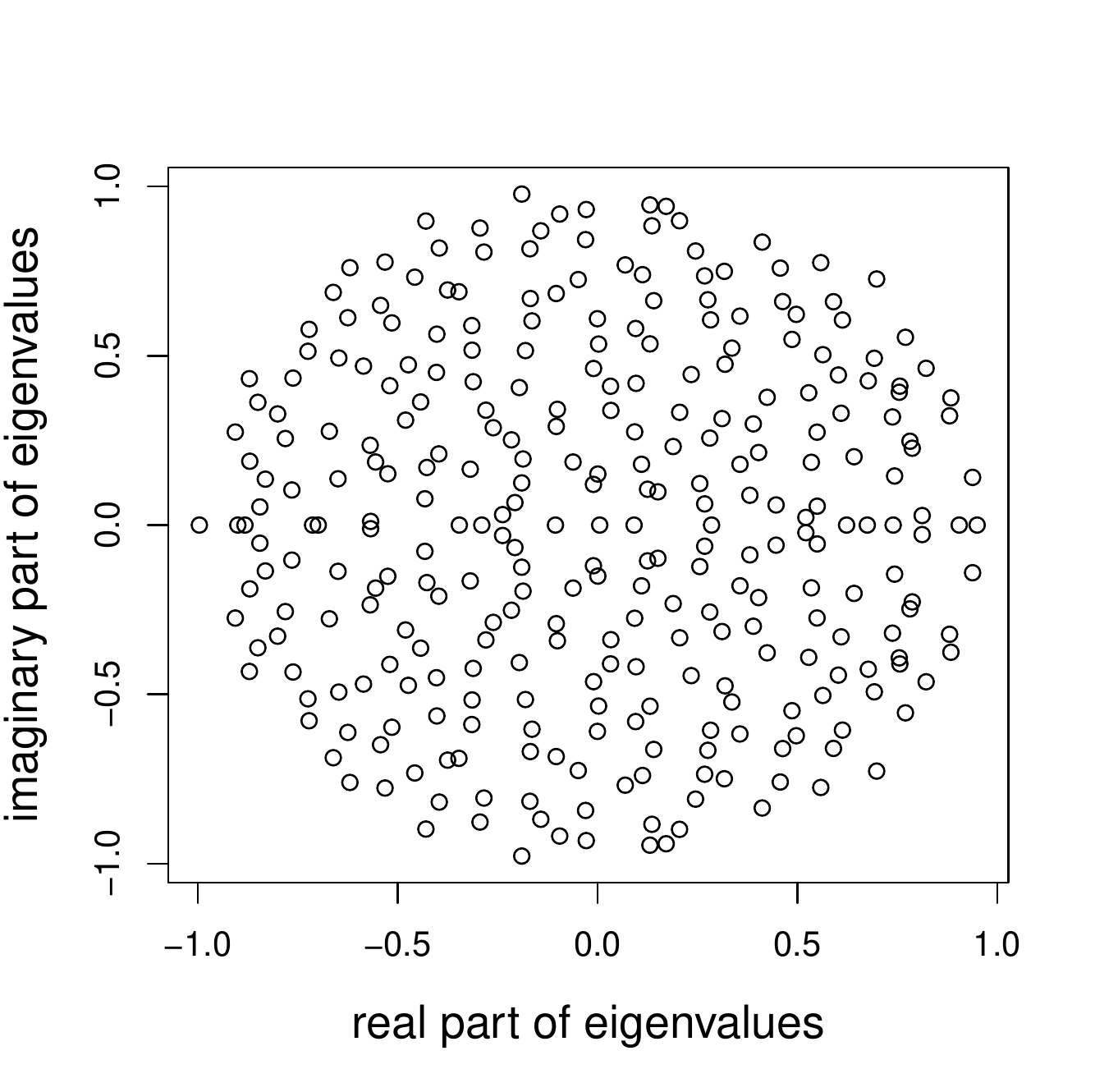}
\includegraphics[height=55mm, width=50mm]{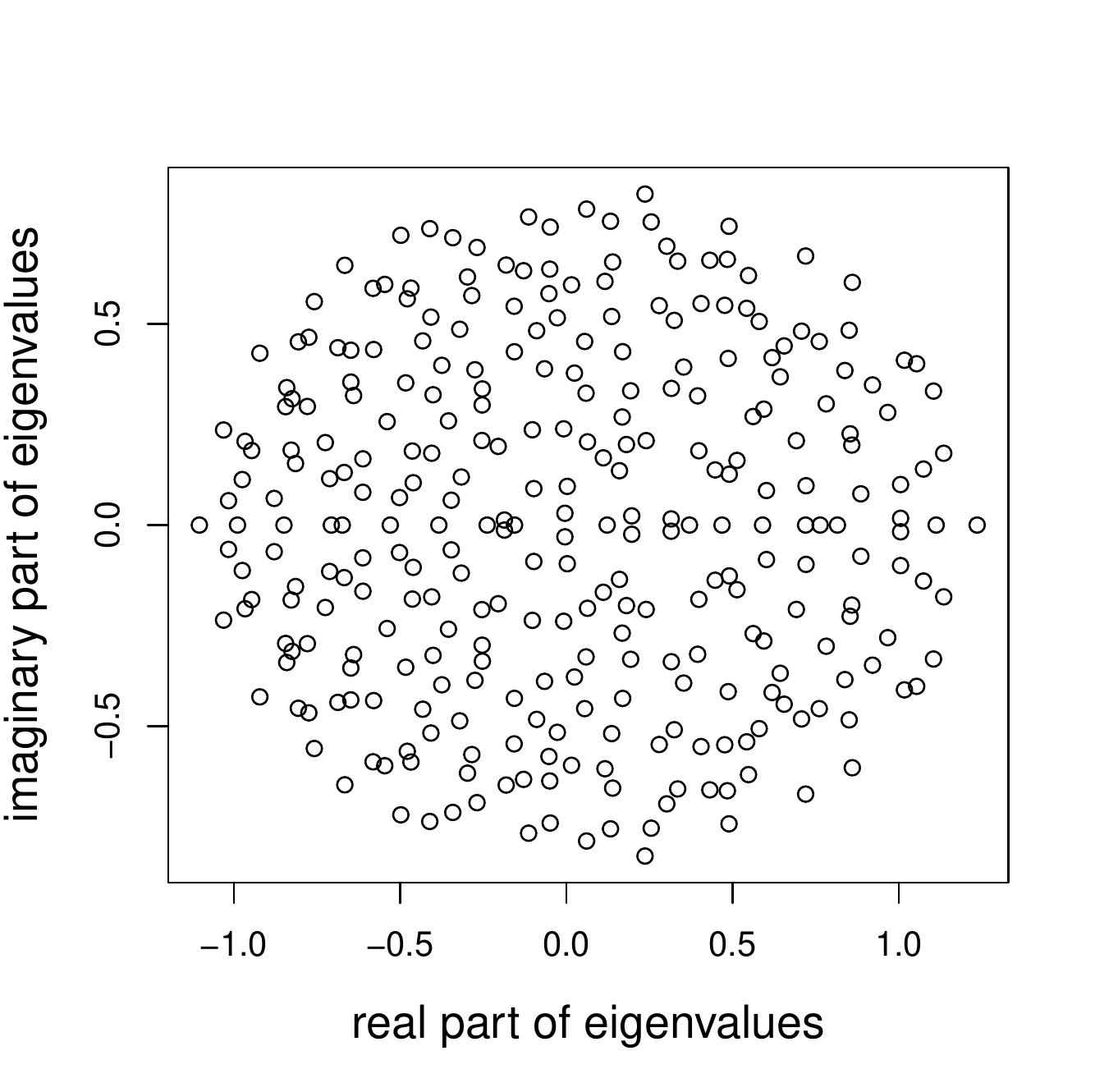}
\includegraphics[height=55mm, width=50mm]{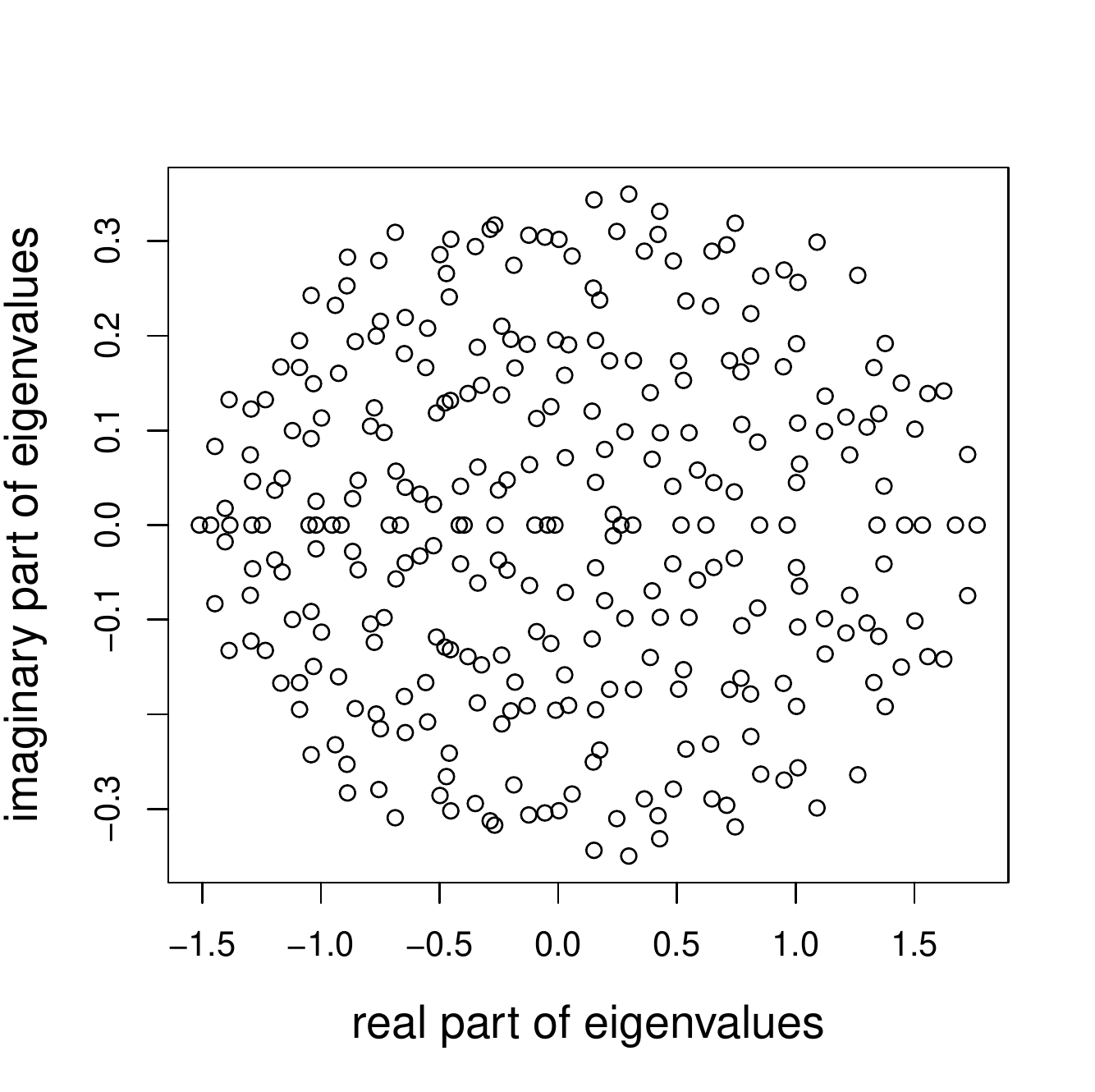}
\caption{Histogram for ESD of $\sqrt{np^{-1}}(C+C^{*}-2\rho\text{I}_p)$, $\sqrt{np^{-1}}(CC^{*}-\rho^2\text{I}_p)$ and $\sqrt{n_2p^{-1}}(C_1C_2^{*}+C_2C_1^{*}- 2\rho^2\text{I}_p)$ ($n_1=n$, $n_2 = 2n$, $\rho_1=\rho_2=\rho$) in Rows 1-3 respectively.  Scatter plot for ESD of $\sqrt{np^{-1}}(C-\rho\text{I}_p)$ in Row 4. All figures are for $n=10000$ and $p=300$. The values of $\rho$ are $0,0.4, 0.8$ respectively in Columns 1-3.} % Column 1: $n=p=500$, $\rho=0$, Column 2: $n=1000, p=500$, $\rho = 0.4$ Column 3: $n=500, p=1000$, $\rho = 0.8$.}
\label{fig2}\end{figure}

\end{document}